\newcommand{\defn}[1]{{\color{blue2}\bf#1}}
\crefname{equation}{}{}
\newtheorem{theorem}{Theorem}[section]
\newtheorem{proposition}[theorem]{Proposition}
\newtheorem{lemma}[theorem]{Lemma}
\newtheorem{corollary}[theorem]{Corollary}
\newtheorem{conjecture}[theorem]{Conjecture}
\newtheorem*{theorem*}{Theorem}
\theoremstyle{definition}
\newtheorem{definition}[theorem]{Definition}
\newtheorem{example}[theorem]{Example}
\theoremstyle{remark}
\newtheorem{remark}[theorem]{Remark}
\theoremstyle{plain}
\definecolor{blue1}{HTML}{007CFC}
\definecolor{blue2}{HTML}{4169E1}
\definecolor{red1}{HTML}{D20000}
\newcommand\blue[1]{{\color{blue1}{#1}}}
\newcommand\red[1]{{\color{red1}{#1}}}
\newcommand{\oeis}[1]{\href{https://oeis.org/#1}{#1} in the OEIS \citep{oeis}}
\newcommand{\cham}{\mathcal{R}}
\newcommand{\face}{\Sigma}
\renewcommand{\flat}{\mathcal{L}}
\DeclareMathOperator{\codim}{codim}
\DeclareMathOperator{\rank}{rank}
\newcommand{\arr}{\mathcal{A}}
\newcommand{\barr}{\mathcal{B}}
\newcommand{\darr}{\mathcal{D}}
\newcommand{\cB}{\mathcal{B}}
\newcommand{\cD}{\mathcal{D}}
\newcommand{\cG}{\mathcal{G}}
\newcommand{\cI}{\mathcal{I}}
\newcommand{\cP}{\mathcal{P}}
\newcommand{\cQ}{\mathcal{Q}}
\newcommand{\frakB}{\mathfrak{B}}
\newcommand{\frakD}{\mathfrak{D}}
\newcommand{\frakS}{\mathfrak{S}}
\newcommand{\frakz}{\mathfrak{z}}
\newcommand{\rmH}{\mathrm{H}}
\newcommand{\rmL}{\mathrm{L}}
\newcommand{\rmX}{\mathrm{X}}
\newcommand{\rmY}{\mathrm{Y}}
\newcommand{\rmZ}{\mathrm{Z}}
\newcommand{\sfh}{\mathsf{h}}
\newcommand{\RR}{\mathbb{R}}
\newcommand{\ZZ}{\mathbb{Z}}
\newcommand{\opp}[1]{\overline{#1}} %opposite of a face, cone, etc.
\newcommand{\qqand}{\qquad\text{and}\qquad}
\newcommand{\qqiff}{\qquad\text{if and only if}\qquad}
\DeclareMathOperator{\asc}{asc} % Number of ascents of a permutation
\DeclareMathOperator{\Des}{Des} % Set of descents of a permutation
\DeclareMathOperator{\des}{des} % Number of descents of a permutation
\DeclareMathOperator{\exc}{exc} % Number of excedances of a permutation
\DeclareMathOperator{\fexc}{fexc} % flag excedances - type B
\DeclareMathOperator{\fdes}{fdes} % flag descent - type B
\DeclareMathOperator{\fneg}{neg} % number negations - type B
\title[The Primitive Eulerian polynomial]{The Primitive Eulerian polynomial}
\address{
LACIM\\
Universit\'e du Qu\'ebec \`a Montr\'eal\\
CP 8888 Succ. Centre-Ville\\
Montr\'eal, Qu\'ebec, H3C 3P8\\ Canada}
\author[J.~Bastidas]{Jose Bastidas}
\address[Jose Bastidas]{}
\email{bastidas\_olaya.jose\_dario@uqam.ca}
\urladdr{\url{https://sites.google.com/view/bastidas}}
\author[C.~Hohlweg]{Christophe~Hohlweg}
\address[Christophe Hohlweg]{}
\email{hohlweg.christophe@uqam.ca}
\urladdr{\url{http://hohlweg.math.uqam.ca}}
\author[F.~Saliola]{Franco Saliola}
\address[Franco Saliola]{}
\email{saliola.franco@uqam.ca}
\urladdr{\url{https://saliola.github.io/}}
\subjclass{52C35,05A05}
\keywords{Hyperplane arrangement, Eulerian polynomial, Tits product, permutation statistics}
\date{\today}
\thanks{This research was partially supported by the NSERC grant ``Algebraic and geometric combinatorics of Coxeter groups'' held by CH}
\begin{document}

\begin{abstract}
We introduce the Primitive Eulerian polynomial $P_\arr(z)$ of a central hyperplane arrangement $\arr$.
It is a reparametrization of its cocharacteristic polynomial.
Previous work of the first author implicitly show that, for simplicial arrangements, $P_\arr(z)$ has nonnegative coefficients.
For reflection arrangements of type A and B, the same work interprets the coefficients of $P_\arr(z)$ using the (flag)excedance statistic on (signed) permutations.
The main result of this article is to provide an interpretation of the coefficients of $P_\arr(z)$ for all simplicial arrangements only using the geometry and combinatorics of $\arr$.

This new interpretation sheds more light to the case of reflection arrangements and, for the first time, gives combinatorial meaning to the coefficients of the Primitive Eulerian polynomial of the reflection arrangement of type D.
In type B, we find a connection between the Primitive Eulerian polynomial and the $1/2$-Eulerian polynomial of Savage and Viswanathan (2012).
We present some real-rootedness results and conjectures for $P_\arr(z)$.
\end{abstract}

\maketitle

{\setcounter{tocdepth}{1}\parskip=0pt\footnotesize\tableofcontents}

\section*{Introduction}

\renewcommand{\thetheorem}{\Alph{theorem}}

Let $\arr$ be a finite linear hyperplane arrangement in $\RR^n$.
We denote by $\flat$ its intersection lattice ordered by inclusion and $\bot \in \flat$ its minimum element (i.e. the intersection of all the hyperplanes in $\arr$).
The aim of this this article is to study the \defn{Primitive Eulerian polynomial} of $\arr$
\[
	P_\arr(z) := \sum_{\rmX \in \flat} |\mu(\bot,\rmX)| (z-1)^{\codim(\rmX)},
\]
where $\mu$ is the M\"obius function of $\flat$.

The Primitive Eulerian polynomial appeared implicitly in previous work of the first author~\citep{bastidas20polytope}, but only in the simplicial case.
Concretely, $P_\arr(z)$ is the Hilbert-Poincare series of a certain graded subspace of the \emph{polytope algebra of generalized zonotopes of $\arr$} \citep{mcmullen89,mcmullen93simple},
and therefore has non-negative coefficients.
It can also be obtained as a reparametrization of the cocharacteristic polynomial studied by Novik, Postnikov, and Sturmfels in \citep{nps02syzygies} which, conjecturally, cannot be obtained as a specialization of the well-known Tutte polynomial.

The main result of this article is to provide an explanation for the non-negativity of these coefficients from the geometry and combinatorics of $\arr$, more precisely, via \emph{generic halfspaces} and the \emph{weak order} on the regions of $\arr$.
Furthermore, for reflection arrangements of types A, B, and D, we give a combinatorial interpretation of these coefficients in terms of the usual \emph{Coxeter descent} statistic.

More precisely, let $\cham$ denote the collection of \defn{regions of $\arr$}: the (closures of) connected components of the complement of $\arr$ in $\RR^n$.
For any two regions $C$ and $C'$, ${\rm sep}(C,C')$ is the set of hyperplanes $\rmH \in \arr$ that separate $C$ and $C'$;
that is, such that $C$ and $C'$ are not contained in the same halfspace bounded by $\rmH$.
Fix a \defn{base region} $B \in \cham$.
The \defn{weak order with base region~$B$}~\citep{mandel82thesis,edelman84regions} is the partial order relation $\preceq_B$ on $\cham$ defined by
\[
	C \preceq_B C'
	\qqiff
	{\rm sep}(B,C) \subseteq {\rm sep}(B,C').
\]
Let $\des_{\preceq_B}(C)$ denote the number of regions covered by $C$ in the partial order $\preceq_B$.

A hyperplane $\rmH$ is \defn{generic with respect to $\arr$} if it contains $\bot$ and does not contain any other flat of $\arr$.
A halfspace $\sfh$ bounded by such a hyperplane is also said to be \defn{generic with respect to~$\arr$}.
A vector $v \in \RR^n$ is \defn{very generic with respect to $\arr$} if it is not contained in any hyperplane of $\arr$ and the halfspace $\sfh_v^- := \{ x \in \RR^n \,:\, \langle x , v \rangle \leq 0 \}$ is generic with respect to $\arr$.

A simplicial arrangement $\arr$ is \defn{sharp} if for all regions $C \in \cham$, the angle between any two facets of $C$ is at most $\tfrac{\pi}{2}$. Notably, all
Coxeter
arrangements are sharp.

\begin{theorem}\label{thm:A}
Let $\arr$ be a sharp arrangement.
Then, for any very generic vector $v \in \RR^n$,
\[
	P_\arr(z) = \sum_{C \subseteq \sfh_v^-} z^{\des_{\preceq_{B(v)}}(C)},
\]
where $B(v) \in \cham$ denotes the unique region of $\arr$ containing $v$.
The sum is over all regions $C \in \cham$ contained in $\sfh^-_v$.
\end{theorem}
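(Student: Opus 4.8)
The plan is to reduce to the ``$z=1$ case'' of the statement---namely that for an essential central arrangement the number of regions contained in a generic halfspace equals $|\mu(\bot,\RR^n)|=P_\arr(1)$---applied to \emph{every} restriction of $\arr$, after reorganizing the descent statistic into a sum over faces. We may assume $\arr$ is essential: passing to the essentialization changes neither $P_\arr$ nor the weak order and its descent statistic, and a very generic $v$ then descends to a very generic vector downstairs. Write $B=B(v)$.

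The first step is to rewrite the right-hand side. In a simplicial arrangement the regions covered by $C$ in $\preceq_B$ are exactly those adjacent to $C$ across a facet hyperplane belonging to ${\rm sep}(B,C)$, so $\des_{\preceq_B}(C)$ counts such hyperplanes. Expanding $z^{\des_{\preceq_B}(C)}=\sum_S(z-1)^{|S|}$ over subsets $S$ of these ``descent walls'' and identifying $S$ with the face $\bigcap_{\rmH\in S}(C\cap\rmH)$ of the simplicial cone $C$ (subsets of walls correspond to faces, of codimension $|S|$), one gets $\sum_{(C,F)}(z-1)^{\codim F}$, summed over pairs with $C\subseteq\sfh_v^-$ and $F$ a face of $C$ all of whose containing walls lie in ${\rm sep}(B,C)$. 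Grouping by the flat $\operatorname{span}F$ (so $\codim F=\codim\operatorname{span}F$), it suffices to prove that for each $\rmX\in\flat$ the number of such pairs with $\operatorname{span}F=\rmX$ equals $|\mu(\bot,\rmX)|$.

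Fix $\rmX$. The faces $F$ with $\operatorname{span}F=\rmX$ are precisely the regions of the restriction $\arr^\rmX$, and the regions $C$ of $\arr$ admitting a fixed such $F$ as a face are in natural bijection---via the local picture of $\arr$ near a point of $\operatorname{relint}F$---with the regions of the localization $\arr_\rmX=\{\rmH\in\arr:\rmH\supseteq\rmX\}$, compatibly with walls and with ${\rm sep}(B,\cdot)$. Under this bijection the condition ``every wall of $C$ containing $F$ lies in ${\rm sep}(B,C)$'' becomes ``every wall of the corresponding region of $\arr_\rmX$ separates it from the region containing $B$''; since $\arr_\rmX$ is again simplicial, this forces that region to be the one antipodal to the region containing $B$, so $F$ determines a unique admissible region $C(F)$. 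Thus the count at $\rmX$ equals $\#\{F:C(F)\subseteq\sfh_v^-\}$, and the crux is the equivalence $C(F)\subseteq\sfh_v^-\iff F\subseteq\sfh_v^-$. Granting it, this count is the number of regions of $\arr^\rmX$ contained in $\sfh_v^-$; since $v$ very generic for $\arr$ makes $v^\perp\cap\rmX$ a generic hyperplane of $\arr^\rmX$, and the intersection lattice of $\arr^\rmX$ is the interval $[\bot,\rmX]$, the ``$z=1$ case'' applied to $\arr^\rmX$ evaluates it to $|\mu_{\arr^\rmX}(\bot,\rmX)|=|\mu(\bot,\rmX)|$, completing the proof.

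It remains to prove the equivalence; the direction $F\subseteq\sfh_v^-\Rightarrow C(F)\subseteq\sfh_v^-$ is the substantive one, and is where sharpness enters. The face $F$ and the complementary face $G_F:=C(F)\cap\bigcap\{\rmH:\rmH\text{ a wall of }C(F),\ F\not\subseteq\rmH\}$ split $C(F)$ as an internal direct sum $C(F)=F\oplus G_F$ (the functionals defining the walls not containing $F$ vanish on $\operatorname{span}G_F$, those defining the walls containing $F$ vanish on $\operatorname{span}F=\rmX$), so $C(F)\subseteq\sfh_v^-$ iff $F\subseteq\sfh_v^-$ and $G_F\subseteq\sfh_v^-$; it therefore suffices to deduce $G_F\subseteq\sfh_v^-$ from $F\subseteq\sfh_v^-$. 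Writing $v=\sum_m c_m u_m$ in the basis of inward facet normals $u_m$ of $C(F)$, one checks that $F\subseteq\sfh_v^-$ is equivalent to $c_j\le0$ for the walls $j$ not containing $F$, and $G_F\subseteq\sfh_v^-$ to $c_i\le0$ for the walls $i$ containing $F$. For each such $i$ one has $\langle u_i,v\rangle<0$ (the wall separates $C(F)$ from $B\ni v$ and $v$ avoids all hyperplanes), while sharpness makes the Gram matrix $(\langle u_i,u_j\rangle)$ have non-positive off-diagonal entries; isolating the walls containing $F$ and using $c_j\le0$ for the others yields $M\mathbf c'<0$ entrywise, where $M$ is the corresponding principal submatrix of the Gram matrix (a Stieltjes matrix) and $\mathbf c'$ collects the coordinates $c_i$ at the walls containing $F$. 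As Stieltjes matrices have entrywise nonnegative inverses, $\mathbf c'=M^{-1}(M\mathbf c')<0$, so $c_i<0$ for all walls $i$ containing $F$, as needed. I expect this lemma---obtaining the direct-sum decomposition and recognizing sharpness as exactly the M-matrix condition forcing the remaining coordinates to be negative---to be the main obstacle. (The ``$z=1$ case'' invoked above does not use sharpness but must also be established, e.g.\ from the antipodal symmetry $x\mapsto-x$ of a central arrangement together with the identification of the regions crossing the bounding generic hyperplane with the regions of the arrangement it induces.)
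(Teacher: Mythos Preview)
Your argument is correct and, at the structural level, parallels the paper's: both reduce the identity to Greene--Zaslavsky's count $|\mu(\bot,\rmX)|=\#\cham_{\arr^\rmX}(\sfh_v^-)$ applied to every restriction, via the expansion of $z^{\des(C)}$ as $\sum_F (z-1)^{\codim F}$ over the faces $F\le C$ with $F\opp{B}=C$ (your $C(F)=C$). In the paper this packaging is done through the descent-set lemma and the identity $\Psi_\arr(z)=\sum_{F\subseteq\sfh}z^{\dim F}$; you unpack it by hand, which is fine.

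The genuine divergence is in the sharpness step. The paper proves that $\cham(\sfh_v^-)$ is an upper set of $\preceq_{B(v)}$ by a one-cover-step argument: if $C\subseteq\sfh_v^-$ and $D\succ C$ is adjacent across $\rmH$, the single new ray $r_d$ of $D$ is written, by sharpness, as $r_d\in\RR_+\{r_1,\dots,r_{d-1},n_d\}$, and each of these vectors already pairs negatively with $v$, so $r_d$ does too. You instead prove the equivalent implication $F\subseteq\sfh_v^-\Rightarrow F\opp{B}\subseteq\sfh_v^-$ for \emph{arbitrary} faces in one shot, by expressing $v$ in the inward-normal basis of $C(F)$ and using that the Gram matrix (and its principal submatrix on the walls through $F$) is a Stieltjes matrix, hence has entrywise nonnegative inverse. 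Both are valid; the paper's route is more elementary (it needs only the $1\times1$ ``Stieltjes'' case, so to speak), while yours is more direct and makes transparent that sharpness is exactly the M-matrix condition on the Gram matrix of facet normals. The converse implication $C(F)\subseteq\sfh_v^-\Rightarrow F\subseteq\sfh_v^-$ is immediate since $F\le C(F)$; the paper gets it implicitly from purity of $\face(\sfh_v^-)$.
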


\begin{wrapfigure}[12]{r}{0.35\textwidth} \centering
\includegraphics[width = 0.28\textwidth]{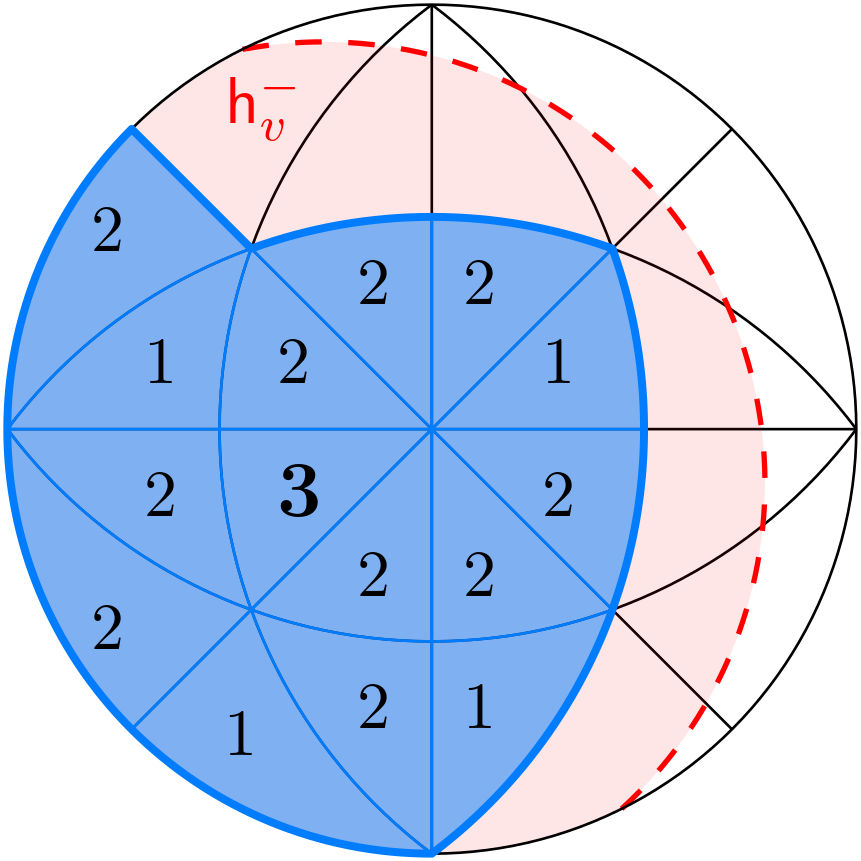}
\end{wrapfigure}

For a precise statement see \cref{t:sharp-distr}.
The figure on the right illustrates this theorem, as we now explain.
It shows
the intersection with the unit sphere of
a sharp arrangement~$\arr$ in~$\RR^3$.
The very generic vector $v \in \RR^3$ lies in the region antipodal to the region with label $3$, so it is not visible on this picture.
The blue regions $C$ are those contained in the halfspace $\sfh_v^-$, and their labels indicate the values $\des_{\preceq_{B(v)}}(C)$.
It follows that the Primitive Eulerian polynomial of this arrangement is 
$P_{\arr}(z) = z^3 + 10 z^2 + 4z$.

When $\arr$ is the reflection arrangement of type ${\rm A}_{n-1}$ (resp. ${\rm B}_n$) in $\RR^n$,
the first author provides in \citep{bastidas20polytope}
a combinatorial interpretation of the coefficients of $P_\arr(z)$ in terms of the excedance (resp. flag-excedance) statistic.
Concretely, let ${\rm cusp}(\frakS_n)$ (resp. ${\rm cusp}(\frakB_n)$) denote the collection of elements whose action on $\RR^n$ only fixes points in $\bot$, then
\[
	P_{\frakS_n}(z) = \sum_{w \in {\rm cusp}(\frakS_n)} z^{\exc(w)}
	\qquad\qqand\qquad
	P_{\frakB_n}(z) = \sum_{w \in {\rm cusp}(\frakB_n)} z^{\exc_B(w)}.
\]
The notation reflects the fact that these are precisely the \defn{cuspidal elements} of the corresponding Coxeter group \citep{gp00CharsCox}.
In $\frakS_n$, they are precisely the cycles of order $n$; and in~$\frakB_n$, they are the fixed-point-free products of balanced cycles.
See \cref{ss:PEul-A,ss:PEul-B} for details.
However, up to now, there was no combinatorial interpretation for the reflection arrangement of type D:
using \cref{t:sharp-distr} and building upon the work of Bj\"{o}rner and Wachs \citep{bw04geombases}, we provide such an interpretation.
More precisely, the Coxeter group of type ${\rm D}_n$ can be identified with the group of \defn{even signed permutations}: words $w = w_1 w_2 \dots w_n$ with $w_i \in [\pm n] = \{-n,\dots,-2,-1,1,2,\dots,n\}$ such that \hbox{$|w_1| |w_2| \dots |w_n| \in \frakS_n$}
and~$w$ has an even number of negative letters.
Let $BW^D_n$ be the collection of $w$ such that all the right-to-left maxima of $|w_1| |w_2| \dots |w_n| \in \frakS_n$ are negative in $w$ and $|w_1| \neq n$.
We obtain the following.

\begin{theorem}
For all $n \geq 2$,
\[P_{\cD_n}(z) = \sum_{w \in BW^D_n} z^{\des(w)}.\]
\end{theorem}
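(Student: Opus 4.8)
The plan is to deduce the identity from \cref{thm:A} (in the precise form \cref{t:sharp-distr}) applied to the reflection arrangement $\arr=\cD_n$, which is a Coxeter arrangement and hence sharp. Identify the regions of $\cD_n$ with the group $\frakD_n$ of even signed permutations, the (closed) fundamental chamber $C_0$ corresponding to the identity and $C_w:=wC_0$ to $w=w_1w_2\cdots w_n$. With $B$ the fundamental chamber, $\preceq_B$ is then the (right) weak order on $\frakD_n$, and $\des_{\preceq_B}(C_w)$ is the number of right descents of $w$, i.e.\ the Coxeter descent statistic $\des(w)=\bigl|\{i\in[n-1]:w_i>w_{i+1}\}\bigr|+[\,w_1+w_2<0\,]$. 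Thus it suffices to exhibit one very generic vector $v$ lying in the fundamental chamber --- so that $B(v)$ is the identity region and $\des_{\preceq_{B(v)}}(C_w)=\des(w)$ --- for which
\[
	\{\,w\in\frakD_n : C_w\subseteq\sfh_v^-\,\}=BW^D_n .
\]
The theorem is then immediate from \cref{t:sharp-distr}.

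Choose coordinates on $\RR^n$ so that $C_0=\{x:|x_1|<x_2<\cdots<x_n\}$, and take $v=(v_1,\dots,v_n)$ with $0<v_1<v_2<\cdots<v_n$ growing fast enough that the sign of every $\{0,\pm1\}$-combination of the $v_i$ is the sign of its largest-indexed nonzero term --- for instance $v_i=2^{i-1}$. Then $v\in C_0$, and $v$ is very generic, since the one-dimensional flats of $\cD_n$ are spanned by vectors $\sum_{i\in S}\pm e_i$ and no signed subset sum of the $v_i$ vanishes. To test $C_w\subseteq\sfh_v^-$, observe that $C_w$ is a simplicial cone with extreme rays $w\rho$, $\rho$ ranging over the extreme rays of $C_0$ (the fundamental weights), so $C_w\subseteq\sfh_v^-$ iff $\langle\rho,w^{-1}v\rangle\le 0$ for each $\rho$. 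In these coordinates the extreme rays of $C_0$ are $e_k+e_{k+1}+\cdots+e_n$ for $k=3,\dots,n$ together with the two ``forked'' rays $\pm e_1+e_2+\cdots+e_n$, and $w^{-1}v=(u_1,\dots,u_n)$ where $u_i:=\varepsilon_i\,v_{|w_i|}$ with $\varepsilon_i\in\{\pm1\}$ the sign of $w_i$. Hence the test is
\[
	\sum_{i\ge k}u_i\le 0\quad(k=3,\dots,n)\qquad\text{and}\qquad\sum_{i\ge 2}u_i\le -\,v_{|w_1|}.
\]

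By the growth condition on $v$, the sign of $\sum_{i\ge k}u_i$ is the sign $\varepsilon_{j}$ at the position $j\ge k$ maximizing $|w_j|$; so the first family of inequalities says precisely that every right-to-left maximum of $|w_1|\cdots|w_n|$ occurring in a position $\ge 3$ is negative in $w$. The last inequality forces $|w_1|\ne n$ (otherwise its left side is dominated by $+v_n>0$), and, once $|w_1|\ne n$, it is equivalent to demanding that the position carrying $|w_i|=n$ --- which is automatically a right-to-left maximum lying in a position $\ge 2$ --- be negative in $w$; this in particular pins down the single remaining possible right-to-left maximum, at position $2$. Combining these, $C_w\subseteq\sfh_v^-$ holds if and only if $|w_1|\ne n$ and every right-to-left maximum of $|w_1|\cdots|w_n|$ is negative in $w$, that is, if and only if $w\in BW^D_n$, which completes the argument. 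I expect the main obstacle to be exactly the bookkeeping around the two forked extreme rays of $C_0$: this is what encodes the evenness constraint cutting $\frakD_n$ out of $\frakB_n$ and is the source of the condition $|w_1|\ne n$, and it is here that the combinatorial analysis of Bj\"orner and Wachs \citep{bw04geombases} enters. The degenerate cases $n=2$ (where $\cD_2=\cA_1\times\cA_1$) and $n=3$ (where $\cD_3\cong\cA_3$) can be verified directly or absorbed into the general computation.
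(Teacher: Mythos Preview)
Your proposal is correct and follows essentially the same route as the paper: apply \cref{t:sharp-distr} to the sharp arrangement $\cD_n$ with the very generic vector $v=(1,2,4,\dots,2^{n-1})$ in the fundamental chamber, and identify $\{w:C_w\subseteq\sfh_v^-\}$ with $BW^D_n$. The only substantive difference is that the paper simply cites \citep[Proposition 8.3]{bw04geombases} for this identification, whereas you redo the extreme-ray computation by hand; your argument is sound, including the handling of the two forked rays that produces the extra condition $|w_1|\neq n$. (One phrasing is slightly loose: when $|w_1|=n$ the obstruction is that $\sum_{i\ge 2}u_i\ge -(v_1+\cdots+v_{n-1})>-v_n$, not that $v_n$ dominates the left side; but the conclusion stands.)
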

For a precise statement see \cref{t:PE-D-des}.
Analogous results for type A and B appear in \cref{t:PE-A-des,t:PE-B-des}.

The article is organized as follows.
In \cref{s:prelims} we review some preliminaries on hyperplane arrangements.
We introduce the Primitive Eulerian polynomial in \cref{s:PEul} and give a combinatorial interpretation for its coefficients, in the case of simplicial arrangements, in \cref{s:upsets}.
We specialize our result to the case of Coxeter arrangements in \cref{s:PEulCox};
closely examine the types A, B, and D cases in \cref{ss:PEul-A,ss:PEul-B,ss:PEul-D};
and compute the Primitive Eulerian polynomial for an infinite family of simplicial arrangements sitting between the type D and type B arrangements in \cref{ss:betweenDB}.
Finally, in \cref{s:roots} we explore some real-rootedness results and conjectures.

\subsection*{Acknowledgments}

We would like to thank Federico Ardila and Eliana Tolosa Villareal for helpful discussions and for kindly sharing their Master's Thesis with us.
We also thank Ron Adin and Yuval Roichman for pointing us to useful references on flag statistics on the hyperoctahedral group,
and Hugh Thomas and Nathan Reading for helpful comments which led to \cref{r:shell}.

\section{Preliminaries and notation}\label{s:prelims}

\renewcommand{\thetheorem}{\thesection.\arabic{theorem}}

We start by recalling some classical definitions, including restriction, localization and the Tits semigroup (or face semigroup) of a hyperplane arrangement; see for instance \citep{am17,stanley2007} for more details.
The reader familiar with these concepts can safely proceed to \cref{s:PEul}.

In this article we only consider \defn{finite real linear hyperplane arrangements} in $\RR^n$, i.e., a finite collection of linear hyperplanes $\rmH \subseteq \RR^n$.
The subspaces obtained by intersecting some hyperplanes in $\arr$ are called \defn{flats} of $\arr$.
The collection $\flat[\arr]$ of flats of $\arr$ is naturally ordered by inclusion.
It turns out that the poset $(\flat[\arr],\leq)$ is a graded lattice with maximum element $\top := \RR^n$
and minimum element $\bot := \bigcap \arr$. The arrangement $\arr$ is \defn{essential} if $\bot = \{0\}$.
Otherwise, we denote by ${\rm ess}(\arr)$ the \emph{essentialization} of $\arr$, i.e., the intersection of $\arr$ with the subspace orthogonal to $\bot$.

\subsection{Restriction and localization}

Let $\rmX \in \flat[\arr]$ be a flat of $\arr$.
The \defn{restriction} of $\arr$ to $\rmX$ is the following hyperplane arrangement inside $\rmX$:
\[
	\arr^\rmX
		= \{ \rmX \cap \rmH \,:\, \rmH \in \arr,\, \rmX \not\subseteq \rmH \}.
\]
Note that the hyperplanes of $\arr^\rmX$ are precisely the flats $\rmY < \rmX$ of dimension $\dim(\rmX) - 1$.
On the other hand, the \defn{localization} of $\arr$ at $\rmX$ is the arrangement in $\RR^n$ consisting of those hyperplanes of $\arr$ containing $\rmX$:
\[
\arr_\rmX = \{ \rmH \in \arr \,:\, \rmX \subseteq \rmH \}.
\]

\subsection{Regions and faces}

The complement of $\arr$ in $\RR^n$ is the disjoint union of open sets whose closures are convex polyhedral cones.
These full-dimensional cones are the \defn{regions} of $\arr$.
Denote by~$\cham[\arr]$ the collection of regions of $\arr$.
A \defn{face} of $\arr$ is any face of a cone in $\cham[\arr]$.
Denote by~$\face[\arr]$ the collection of faces of $\arr$, it forms a complete polyhedral fan.
Its maximal elements are the regions of $\arr$, and its unique minimal face, which coincides with the minimal flat $\bot$, is denoted $O \in \face[\arr]$.
Finally, the \defn{rank of $\arr$}, denoted $\rank(\arr)$, is the rank of the poset $\face[\arr]$ or, equivalently, the rank of the lattice $\flat[\arr]$.

The arrangement $\arr$ is \defn{simplicial} if every region contains exactly $\rank(\arr)$ faces of rank $1$.
If $\arr$ is essential, this is equivalent to each cone in $\face[\arr]$ being simplicial.

\subsection{The Tits product}

The set $\face[\arr]$ has the structure of a monoid under the \defn{Tits product}.
Informally, the product $FG$ of two faces $F,G \in \face[\arr]$ is the first face you enter when moving a small positive distance from a point in the relative interior of $F$ to a point in the relative interior of $G$, this is illustrated in \cref{f:Tits-prod}.
In order to formalize this product, it will be useful to review the \defn{sign sequence} of a face.

\begin{figure}[ht]
\[
	\begin{gathered}
	\begin{tikzpicture}[scale=.8]
		\draw [<->] (0+180:2) -- (0,0) -- (0:2);
		\draw [<->] (45+180:2) -- (0,0) -- (45:2);
		\draw [<->] (-45+180:2) -- (0,0) -- (-45:2);
		\draw [<->] (90+180:2) -- (0,0) -- (90:2);
		\draw [fill=black] (0,0) circle (.1);
		\node [left] at (0+180:2) {\small $F$};
		\node [below] at (-90:2) {\small $G$};
		\node at (22.5:1.25) {\small $C$};
		\node at (180-22.5:1.25) {\small $FC$};
		\node at (180+22.5:1.25) {\small $FG$};
		\node [below] at (-90+22.5:1.25) {\small $GC$};
		\draw [blue1,dotted] (180:1) -- (270:1);
		\draw [red,->] (180:1) --++ (-45:.4);
		\draw [blue1,fill=blue1] (0,-1) circle (.04);
		\draw [blue1,fill=blue1] (-1,0) circle (.04);
	\end{tikzpicture} 
	\end{gathered}
\]
\caption{The Tits product for some faces of a rank 2 arrangement. After moving a small positive distance from the marked point in $F$ to the marked point in $G$, we land in the region labeled $FG$.}
\label{f:Tits-prod}
\end{figure}
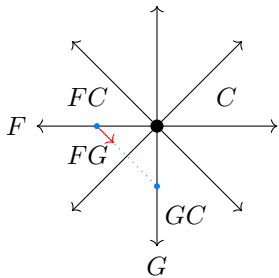

Each hyperplane $\rmH \in \arr$ defines two halfspaces $\rmH^+$ and $\rmH^-$. 
The choice of signs $+$ and $-$ for each hyperplane is arbitrary, but fixed.
The sign sequence of a face $F \in \face[\arr]$ is the vector $\sigma(F) \in \{0,+,-\}^\arr$ determined by
\[
	\sigma_\rmH(F) =
	\begin{cases}
		0 & \text{if } F \subseteq \rmH, \\
		+ & \text{if } F \subseteq \rmH^+ \text{ and } F \not\subseteq \rmH, \\
		- & \text{if } F \subseteq \rmH^- \text{ and } F \not\subseteq \rmH.
	\end{cases}
\]
Given two faces $F,G \in \face[\arr]$, the product $FG$ is the face with sign sequence
\begin{equation}\label{eq:Tits-product}
	\sigma_\rmH(FG) =
	\begin{cases}
		\sigma_\rmH(F) 		& \text{if } \sigma_\rmH(F) \neq 0, \\
		\sigma_\rmH(G) 		& \text{otherwise.}
	\end{cases}
\end{equation}
This operation first appeared in work of Tits \citep{tits74} on Coxeter complexes and of Bland \citep{bland74thesis} on oriented matroids.
Brown observed in \citep{brown00semigroups} that the
monoid $\face[\arr]$ is a \emph{left-regular band}, that is, it satisfies $F^2 = F$ and $FGF = FG$ for all faces $F,G \in \face[\arr]$.
We highlight two other important properties of the Tits product.
Let $O \in \face[\arr]$ be the central face of $\arr$ and $F,G \in \face[\arr]$ be any two faces. Then,
\begin{equation}\label{eq:Tits-properties}
	OF = F = FO
	\qquad\qquad
	F \leq FG
\end{equation}
That is, $\face[\arr]$ is a monoid with unit $O$, and $F$ is always a face of the product $FG$.

A hyperplane $\rmH$ \defn{separates} two faces $F$ and $G$ if $\{ \sigma_\rmH(F) , \sigma_\rmH(G) \} = \{+,-\}$.
That is, if $\rmH$ does not contain $F$ nor $G$, and $F$ and $G$ are contained in opposite halfspaces determined by $\rmH$.
Let $F \in \face[\arr]$, its \defn{opposite face} $\opp{F}$ is the face with sign sequence $\sigma_\rmH(\opp{F}) = -\sigma_\rmH(F)$. Geometrically, $\opp{F} = \{ -x \,:\, x \in F\}$.

\section{The Primitive Eulerian polynomial}\label{s:PEul}

We introduce the main definition of this article.
Let $\arr$ be a linear arrangement.

\begin{definition}\label{def:PEul}
The \defn{Primitive Eulerian polynomial} of $\arr$ is
\[
	P_\arr(z) = \sum_{\rmX \in \flat} |\mu(\bot,\rmX)| (z-1)^{\codim(\rmX)},
\]
where $\mu$ denotes the M\"obius function of $\flat$. The sum is over all the flats of the arrangement.
\end{definition}

The Primitive Eulerian polynomial can also be obtained as a reparametrization of
the \defn{cocharacteristic polynomial} $\Psi_\arr(z)$:
\begin{equation}\label{eq:cochar}
	\Psi_\arr(z) = \sum_{\rmX \in \flat} |\mu(\bot,\rmX)| z^{\dim(\rmX)}.
\end{equation}
Explicitly,
\begin{equation}\label{eq:cochar-to-PEul}
	P_\arr(z) = (z-1)^n \Psi_\arr\big( \dfrac{1}{z-1} \big).
\end{equation}
Novik, Postnikov, and Sturmfels encountered this polynomial in their study of unimodular toric arrangements and remarked that this polynomial does not seem to be a specialization of the Tutte polynomial of $\arr$.
If that is the case, then the Primitive Eulerian polynomial cannot be a specialization of the well-known Tutte polynomial either.

The Primitive Eulerian polynomial is monic: it's leading coefficient is $|\mu(\bot,\bot)| = 1$.
Moreover, if $\arr$ is not trivial (contains at least one hyperplane), then
\[
	P_\arr(0) = \sum_{\rmX \in \flat} |\mu(\bot,\rmX)| (-1)^{\codim(\rmX)}
	= (-1)^{\rank(\arr)} \sum_{\rmX \in \flat} \mu(\bot,\rmX) = 0.
\]
That is, the constant term of $P_\arr(z)$ is zero.

\begin{example}[Rank 1 arrangement]\label{ex:PEul-rk1}
Let $\arr$ be a hyperplane arrangement of rank $1$: it consists of a single hyperplane $\rmH$.
Then, $\flat[\arr] = \{ \bot < \top \}$, where $\bot = \rmH$, and
\[
	P_{\arr}(z) = (z-1) + 1 = z.
\]
\end{example}

\begin{remark}
Observe that $P_{\arr}(z)$ is independent of the dimension of the ambient space.
Indeed, for any arrangement $\arr$, the codimension of a flat $\rmX \in \flat[\arr]$ is precisely the corank of $\rmX$ in the lattice $\flat[\arr]$. Thus, the polynomial $P_{\arr}(z)$ is completely determined by $\flat[\arr]$.
In particular,
\[
	P_{\arr}(z) = P_{{\rm ess}(\arr)}(z).
\]
\end{remark}

\begin{example}[Rank 2 arrangements]\label{ex:PEul-rk2}
For $k \geq 2$, let $\cI_2(k)$ be the linear arrangement of $k$ lines in~$\RR^2$.
The M\"obius function of $\flat[\arr]$ is determined by
\[
	\mu(\bot,\rmL) = -1 \text{ for all lines } \rmL \in \arr,
	\qqand
	\mu(\bot,\top) = k-1.
\]
We obtain
\[
	P_{\cI_2(k)}(z) = (z-1)^2 + k (z-1) + (k-1) = z^2 + (k-2) z.
\]
\end{example}

It directly follows from the definition that the Primitive Eulerian polynomial of a Cartesian product of arrangements is the product of the corresponding Primitive Eulerian polynomials.
That is,
\begin{equation}\label{eq:prod-arr}
	P_{\arr \times \arr'}(z) = P_{\arr}(z) P_{\arr'}(z).
\end{equation}

Previous work of the first author implicitly shows that the polynomial $P_\arr(z)$ has nonnegative coefficients whenever the arrangement $\arr$ is simplicial.
We provide the details below.

\begin{proposition}For any simplicial arrangement $\arr$, the coefficients of $P_\arr(z)$ are nonnegative.
\end{proposition}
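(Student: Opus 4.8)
The plan is to realize $P_\arr(z)$ as the Hilbert--Poincar\'e series of a graded real vector space attached to $\arr$, so that nonnegativity of the coefficients becomes automatic. The point of departure is the observation that, while the coefficients $|\mu(\bot,\rmX)|$ of the cocharacteristic polynomial \eqref{eq:cochar} are manifestly nonnegative, the reparametrization \eqref{eq:cochar-to-PEul} that produces $P_\arr(z)$ mixes them with signs, so nonnegativity is not formal; moreover the statement genuinely uses simpliciality, which is exactly the hypothesis under which McMullen's polytope algebra is well behaved. The natural tool is therefore the \emph{polytope algebra of generalized zonotopes of $\arr$} studied by the first author in \citep{bastidas20polytope}, building on \citep{mcmullen89,mcmullen93simple}.

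First I would recall this algebra. For a simplicial arrangement $\arr$ of rank $r$, the translation classes of polytopes whose normal fan is coarsened by the fan $\face[\arr]$ (the generalized zonotopes of $\arr$) form a commutative monoid under Minkowski sum, and McMullen's construction makes the associated polytope algebra $\Pi(\arr)$ into a finite-dimensional graded commutative $\RR$-algebra $\Pi(\arr) = \bigoplus_{i=0}^{r}\Pi_i(\arr)$, with $\Pi_0(\arr) = \RR$ and with the dilation $P \mapsto \lambda P$ acting on $\Pi_i(\arr)$ as multiplication by $\lambda^i$. Simpliciality of $\arr$ is precisely what guarantees that $\Pi(\arr)$ is graded with finite-dimensional pieces concentrated in degrees $0,\dots,r$; it is also responsible for the tensor decomposition $\Pi(\arr\times\arr') \cong \Pi(\arr)\otimes\Pi(\arr')$ mirroring \eqref{eq:prod-arr}.

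Next I would invoke from \citep{bastidas20polytope} the distinguished graded subspace $\Xi(\arr) = \bigoplus_{i=0}^{r}\Xi_i(\arr) \subseteq \Pi(\arr)$ (defined there from the logarithm map and the coalgebra structure of the polytope algebra) together with the identity
\[
  \sum_{i=0}^{r}\bigl(\dim_{\RR}\Xi_i(\arr)\bigr)\,z^{i} \;=\; P_\arr(z).
\]
In \citep{bastidas20polytope} this is obtained by relating the graded dimensions of $\Xi(\arr)$ to the M\"obius numbers $|\mu(\bot,\rmX)|$ through the flat-indexed structure of $\Pi(\arr)$ and the restriction/localization operations --- equivalently, by checking that these dimensions reassemble the cocharacteristic polynomial of \eqref{eq:cochar} and then applying \eqref{eq:cochar-to-PEul}. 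Since this identification is only \emph{implicit} there, and spelled out only for reflection arrangements, the substantive part of the argument is to extract it and record it for an arbitrary simplicial arrangement. With it in hand, the coefficient of $z^{i}$ in $P_\arr(z)$ equals $\dim_{\RR}\Xi_i(\arr)\ge 0$, which is the assertion; \cref{ex:PEul-rk1,ex:PEul-rk2} and \eqref{eq:prod-arr} serve as consistency checks.

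The hard part is this last identification: deciding precisely which graded subspace of $\Pi(\arr)$ has Hilbert--Poincar\'e series $P_\arr(z)$, and computing its graded dimensions in terms of the M\"obius function of $\flat[\arr]$. The remaining ingredients --- gradedness and finite-dimensionality of $\Pi(\arr)$ in the simplicial case, well-definedness of the logarithm map, and the final nonnegativity conclusion --- are standard once the framework is in place.
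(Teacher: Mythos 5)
Your strategy --- realize $P_\arr(z)$ as the Hilbert--Poincar\'e series of a graded subspace of the polytope algebra of generalized zonotopes, so that nonnegativity becomes a dimension count --- is exactly the framework the paper's abstract alludes to, and you correctly identify \citep{bastidas20polytope} as the source. The difficulty is that you defer, and explicitly acknowledge deferring, the one step that carries mathematical content: the identification of the graded dimensions of your $\Xi(\arr)$ with the coefficients of $P_\arr(z)$ for an arbitrary simplicial arrangement. As written, the proposal is a plan with a hole at precisely the place where a proof would have to do work.

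The paper closes that gap by a more concrete route that bypasses the subspace $\Xi(\arr)$ altogether. For each flat $\rmY \in \flat[\arr]$, it picks a zonotope $\frakz_\rmY$ whose normal fan is $\face[\arr_\rmY]$; simpliciality makes $\frakz_\rmY$ a simple polytope. A direct manipulation of \cref{def:PEul}, combined with Zaslavsky's \citep[Theorem A]{zaslavsky75facing} and Las Vergnas' \citep[Proposition 8.1]{lasvergnas75} region-counting formulas, yields $P_\arr(z+1) = \sum_\rmY \mu(\bot,\rmY)\, f(\frakz_\rmY,z)$, hence $P_\arr(z) = \sum_\rmY \mu(\bot,\rmY)\, h(\frakz_\rmY,z)$. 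Nonnegativity of that explicit $\mu$-weighted sum of $h$-polynomials is then what is actually cited from \citep[Equation (17)]{bastidas20polytope}. So the citation lands on a concrete inequality rather than on an implicit graded subspace, and the substantive step you left open --- reassembling the M\"obius data into something manifestly nonnegative --- is supplied by the $f$-to-$h$ conversion together with the region-counting formulas. To repair your proposal you would either need to carry out that computation yourself, or prove from scratch that the subspace you name has the claimed Hilbert series for all simplicial arrangements, which is not a consequence you can simply read off from the cited reference.
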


\begin{proof}
For every flat $\rmY \in \flat[\arr]$, let $\frakz_\rmY$ be a zonotope dual to $\arr_\rmY$.
That is, such that the normal fan of $\frakz_\rmY$ is $\face[\arr_\rmY]$.
In particular, the zonotope $\frakz_\rmY$ is a simple polytope.
Then, \citep[Equation (17)]{bastidas20polytope} shows that the polynomial
$\sum_\rmY \mu(\bot,\rmY) h(\frakz_\rmY,z)$,
where $h(\frakz_\rmY,z)$ denotes the $h$ polynomial of $\frakz_\rmY$,
has nonnegative coefficients. Manipulating \cref{def:PEul}, we obtain
\[
	P_\arr(z+1) = 
	\sum_{ \substack{\rmX,\rmY,\rmZ \\ \rmX \geq \rmZ \geq \rmY} } \mu(\bot,\rmY) |\mu(\rmZ,\rmX)| z^{\codim(\rmX)} =
	\sum_\rmY \mu(\bot,\rmY) f(\frakz_\rmY,z),
\]
where the second equality follows from Zaslavsky's \citep[Theorem A]{zaslavsky75facing} and Las Vergnas' \citep[Proposition 8.1]{lasvergnas75} formulas to count the number of regions of an arrangement.
Therefore,
\[
	P_\arr(z) = \sum_\rmY \mu(\bot,\rmY) f(\frakz_\rmY,z-1) = \sum_\rmY \mu(\bot,\rmY) h(\frakz_\rmY,z)
\]
has nonnegative coefficients.
\end{proof}

The nonnegativity of the coefficients of $P_\arr(z)$ fails for arbitrary arrangements, as the following example shows.

\begin{example}\label{ex:PEul-4cycle}
Let $\arr$ be the graphic arrangement of a $4$-cycle $g$, see \cref{f:flats4cycle} for an illustration.
The flats of $\arr$ are in correspondence with the \emph{bonds} of $g$, see for instance \citep[Section 6.9]{am17}.
The lattice $\flat[\arr]$ and the values $\mu(\bot,\rmX)$ of its Möbius function are also shown in \cref{f:flats4cycle}.
Using those values, we obtain
\[
P_\arr(z) = (z-1)^3+\red{6}(z-1)^2+\red{8}(z-1)+\red{3} = z^3 + 3 z^2 - z.
\]
Observe that the coefficient of the linear term is negative.
\end{example}

\begin{figure}[ht]
\[
\begin{tikzpicture}[scale = .7]
\newdimen\A
\A=1cm \newdimen\B
\B=1cm \newdimen\C
\C=2cm \node[inner sep=2pt] (1234) at    (0,0\C) {\small\phantom{$1$}\includegraphics[width=.5cm]{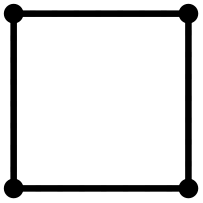}$\red{1}$};
\node[inner sep=2pt] (12)   at (-5\A,1\C) {\small\phantom{$-1$}\includegraphics[width=.5cm]{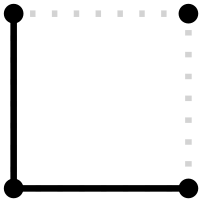}$\red{-1}$};
\node[inner sep=2pt] (13)   at (-3\A,1\C) {\small\phantom{$-1$}\includegraphics[width=.5cm]{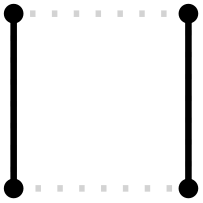}$\red{-1}$};
\node[inner sep=2pt] (14)   at  (-\A,1\C) {\small\phantom{$-1$}\includegraphics[width=.5cm]{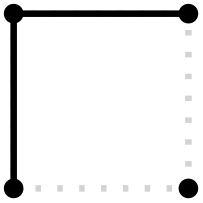}$\red{-1}$};
\node[inner sep=2pt] (23)   at   (\A,1\C) {\small\phantom{$-1$}\includegraphics[width=.5cm]{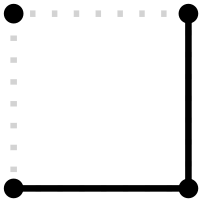}$\red{-1}$};
\node[inner sep=2pt] (24)   at  (3\A,1\C) {\small\phantom{$-1$}\includegraphics[width=.5cm]{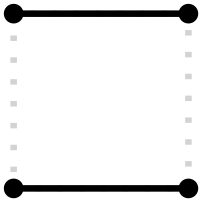}$\red{-1}$};
\node[inner sep=2pt] (34)   at  (5\A,1\C) {\small\phantom{$-1$}\includegraphics[width=.5cm]{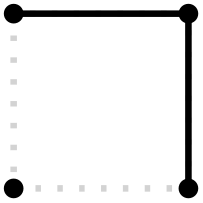}$\red{-1}$};
\node[inner sep=2pt] (1)    at (-3\B,2\C) {\small\phantom{$2$}\includegraphics[width=.5cm]{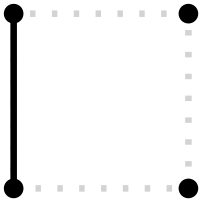}$\red{2}$};
\node[inner sep=2pt] (2)    at  (-\B,2\C) {\small\phantom{$2$}\includegraphics[width=.5cm]{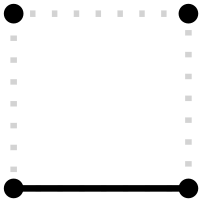}$\red{2}$};
\node[inner sep=2pt] (3)    at   (\B,2\C) {\small\phantom{$2$}\includegraphics[width=.5cm]{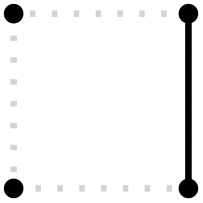}$\red{2}$};
\node[inner sep=2pt] (4)    at  (3\B,2\C) {\small\phantom{$2$}\includegraphics[width=.5cm]{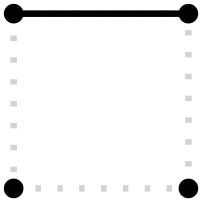}$\red{2}$};
\node[inner sep=2pt] (0)    at    (0,3\C) {\small\phantom{$-3$}\includegraphics[width=.5cm]{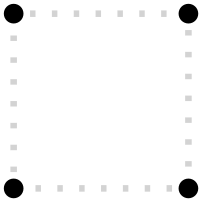}$\red{-3}$};
\draw[] (1234) -- (12) (1234) -- (13) (1234) -- (14) (1234) -- (23) (1234) -- (24) (1234) -- (34);
\draw[] (1) -- (12) -- (2);
\draw[] (1) -- (13) -- (3);
\draw[] (1) -- (14) -- (4);
\draw[] (2) -- (23) -- (3);
\draw[] (2) -- (24) -- (4);
\draw[] (3) -- (34) -- (4);
\draw[] (0) -- (1) (0) -- (2) (0) -- (3) (0) -- (4);
\end{tikzpicture}
\]
\caption{The lattice of flats of the graphic arrangement of a 4-cycle. In red, the values of the Möbius function $\mu(\bot,\rmX)$.}
\label{f:flats4cycle}
\end{figure}

In view of the relation between the Primitive Eulerian polynomial and the cocharacteristic polynomial in \cref{eq:cochar-to-PEul}, the following recursive formula
is equivalent to \citep[Proposition 4.2]{nps02syzygies}.

\begin{proposition}\label{p:PEul-recursion}
Let $\rmH$ be a hyperplane of $\arr$. Then,
\[
P_{\arr}(z) = (z-1)P_{\arr^\rmH}(z) + \sum_\rmL P_{\arr_{\rmL}}(z),
\]
where the sum is over all rank 1 flats $\rmL \in \flat[\arr]$ that are not contained in $\rmH$.
\end{proposition}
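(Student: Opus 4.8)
The plan is to expand both sides via \cref{def:PEul} and reduce the proposition to a single pointwise identity for the Möbius function of $\flat[\arr]$, exploiting that the restriction $\arr^\rmH$ and the localizations $\arr_\rmL$ have intersection lattices that are intervals of $\flat[\arr]$. (One could instead translate \citep[Proposition 4.2]{nps02syzygies} through \cref{eq:cochar-to-PEul}, but I would rather give a self-contained argument.) Concretely, $\flat[\arr^\rmH]$ is the interval $[\bot,\rmH]$ of $\flat[\arr]$, so Möbius values agree and $\codim_{\arr^\rmH}(\rmX)=\codim(\rmX)-1$ for every flat $\rmX\subseteq\rmH$; and for each atom $\rmL$ of $\flat[\arr]$, the lattice $\flat[\arr_\rmL]$ is the interval $[\rmL,\top]$, with the same Möbius values and the same codimension function (the ambient space does not change). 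Substituting into \cref{def:PEul},
\[
	(z-1)\,P_{\arr^\rmH}(z)=\sum_{\rmX\subseteq\rmH}|\mu(\bot,\rmX)|\,(z-1)^{\codim(\rmX)},\qquad
	P_{\arr_\rmL}(z)=\sum_{\rmX\supseteq\rmL}|\mu(\rmL,\rmX)|\,(z-1)^{\codim(\rmX)}.
\]
Subtracting the first from $P_\arr(z)$ leaves $\sum_{\rmX\not\subseteq\rmH}|\mu(\bot,\rmX)|(z-1)^{\codim(\rmX)}$, and summing the second over all atoms $\rmL\not\subseteq\rmH$ and interchanging the order of summation gives $\sum_{\rmX}\bigl(\sum_{\rmL}|\mu(\rmL,\rmX)|\bigr)(z-1)^{\codim(\rmX)}$, the inner sum over atoms $\rmL$ with $\rmL\subseteq\rmX$ and $\rmL\not\subseteq\rmH$. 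Hence the proposition is equivalent to the claim that, for every flat $\rmX$ of $\arr$,
\[
	\sum_{\substack{\rmL\ \mathrm{atom}\\ \rmL\subseteq\rmX,\ \rmL\not\subseteq\rmH}}|\mu(\rmL,\rmX)|=\begin{cases}|\mu(\bot,\rmX)| & \text{if }\rmX\not\subseteq\rmH,\\ 0 & \text{if }\rmX\subseteq\rmH.\end{cases}
\]

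For this pointwise identity, the case $\rmX\subseteq\rmH$ is immediate (empty sum). For $\rmX\not\subseteq\rmH$, I would first use that $\flat[\arr]$ and all its intervals are geometric, so Rota's sign theorem (the Möbius function of a geometric lattice alternates in sign) lets me drop the absolute values up to a uniform sign, reducing the claim to $\sum_{\rmL}\mu(\rmL,\rmX)=-\mu(\bot,\rmX)$, the sum over atoms $\rmL\subseteq\rmX$ with $\rmL\not\subseteq\rmH$. Writing $f(\rmY):=\sum_{\rmL\subseteq\rmY,\ \rmL\not\subseteq\rmH}\mu(\rmL,\rmY)$ (sum over atoms; $f(\bot)=0$), I would prove $f(\rmX)=-[\rmX\not\subseteq\rmH]\,\mu(\bot,\rmX)$ for all flats $\rmX$ by induction on $\rank(\rmX)$. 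The ranks $0$ and $1$ are clear. For $\rank(\rmX)\geq 2$, apply the Möbius recursion $\mu(\rmL,\rmX)=-\sum_{\rmL\subseteq\rmY\subsetneq\rmX}\mu(\rmL,\rmY)$ to each atom $\rmL<\rmX$ and swap the two sums to obtain $f(\rmX)=-\sum_{\rmY\subsetneq\rmX}f(\rmY)$; by the inductive hypothesis this equals $\sum_{\rmY\subsetneq\rmX,\ \rmY\not\subseteq\rmH}\mu(\bot,\rmY)$, which in turn equals $-[\rmX\not\subseteq\rmH]\mu(\bot,\rmX)$ because $\sum_{\rmY\subseteq\rmX,\ \rmY\not\subseteq\rmH}\mu(\bot,\rmY)=\delta_{\bot,\rmX}-\delta_{\bot,\rmX\cap\rmH}$ vanishes once $\rank(\rmX)\geq 2$. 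Here I use $\sum_{\rmY\leq V}\mu(\bot,\rmY)=\delta_{\bot,V}$ for $V=\rmX$ and for the flat $V=\rmX\cap\rmH$, together with the fact that $\rmX\cap\rmH$ is a flat with $\rmX\cap\rmH\neq\bot$ when $\rank(\rmX)\geq 2$. This closes the induction, and combining the two steps proves the proposition.

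The manipulations in the first step are routine bookkeeping; the only slightly awkward point is the degenerate case $\arr=\{\rmH\}$, in which $\arr^\rmH$ is the empty arrangement and the unique atom of $\flat[\arr]$ is $\RR^n$ with $\arr_{\RR^n}$ also empty, but one checks directly that the formula still holds there. The real content is the pointwise Möbius identity, and its one genuinely delicate feature is that the sum ranges over atoms only, not over all flats below $\rmX$; this is precisely why a direct appeal to a standard Möbius summation does not suffice and one is led to the auxiliary recursion $f(\rmX)=-\sum_{\rmY\subsetneq\rmX}f(\rmY)$. (Alternatively, the identity drops out of the order-dual of Weisner's theorem applied to the interval $[\bot,\rmX]$ at the coatom $\rmX\cap\rmH$, once one notes that submodularity forces every flat meeting $\rmX\cap\rmH$ trivially to have rank at most $1$; but I find the inductive argument cleaner and would use it.)
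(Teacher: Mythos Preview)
Your argument is correct. The paper does not actually prove this proposition: it merely observes that, via the reparametrization \cref{eq:cochar-to-PEul}, the statement is equivalent to \citep[Proposition~4.2]{nps02syzygies} for the cocharacteristic polynomial and leaves it there. Your approach is therefore genuinely different: you give a self-contained proof by expanding both sides and reducing to the pointwise identity $\mu(\bot,\rmX)+\sum_{\rmL}\mu(\rmL,\rmX)=0$ (the sum over atoms $\rmL\leq\rmX$ with $\rmL\not\subseteq\rmH$), which you then verify by induction on rank. The paper's route is shorter because it outsources the work; yours has the advantage of exposing exactly where the geometric input enters, namely the dimension bound $\dim(\rmX\cap\rmH)\geq\dim(\rmX)-1$ that guarantees $\rmX\cap\rmH\neq\bot$ once $\rank(\rmX)\geq 2$. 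Your parenthetical observation that the identity is the order-dual of Weisner's theorem applied to $[\bot,\rmX]$ at the coatom $\rmX\cap\rmH$ is correct and is in fact the slickest way to finish---one line rather than an induction---so you might consider promoting it to the main argument.
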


In \cref{ss:PEul-A,ss:PEul-B,ss:PEul-D}, we the above result to obtain quadratic recursive formulas for the Primitive Eulerian polynomials of the classical reflection arrangements (types A, B, and D).

\section{Generic halfspaces, descents, and a combinatorial interpretation}\label{s:upsets}

In this section, we interpret the coefficients of $P_\arr(z)$ in combinatorial terms for any simplicial arrangement and therefore provide a proof of \cref{thm:A}.

\subsection{Generic halfspaces}

Let $\arr$ be a nontrivial hyperplane arrangement in $\RR^n$.
A hyperplane $\rmH \subseteq \RR^n$ (not in the arrangement) is \defn{generic with respect to $\arr$} if it contains the minimum flat $\bot$ and it does not contain any other flat of $\arr$.
The first condition guarantees that there is a canonical correspondence between generic hyperplanes with respect to $\arr$ and generic hyperplanes with respect to its essentialization ${\rm ess}(\arr)$.

\begin{figure}[ht]
\includegraphics[height=.2\textheight]{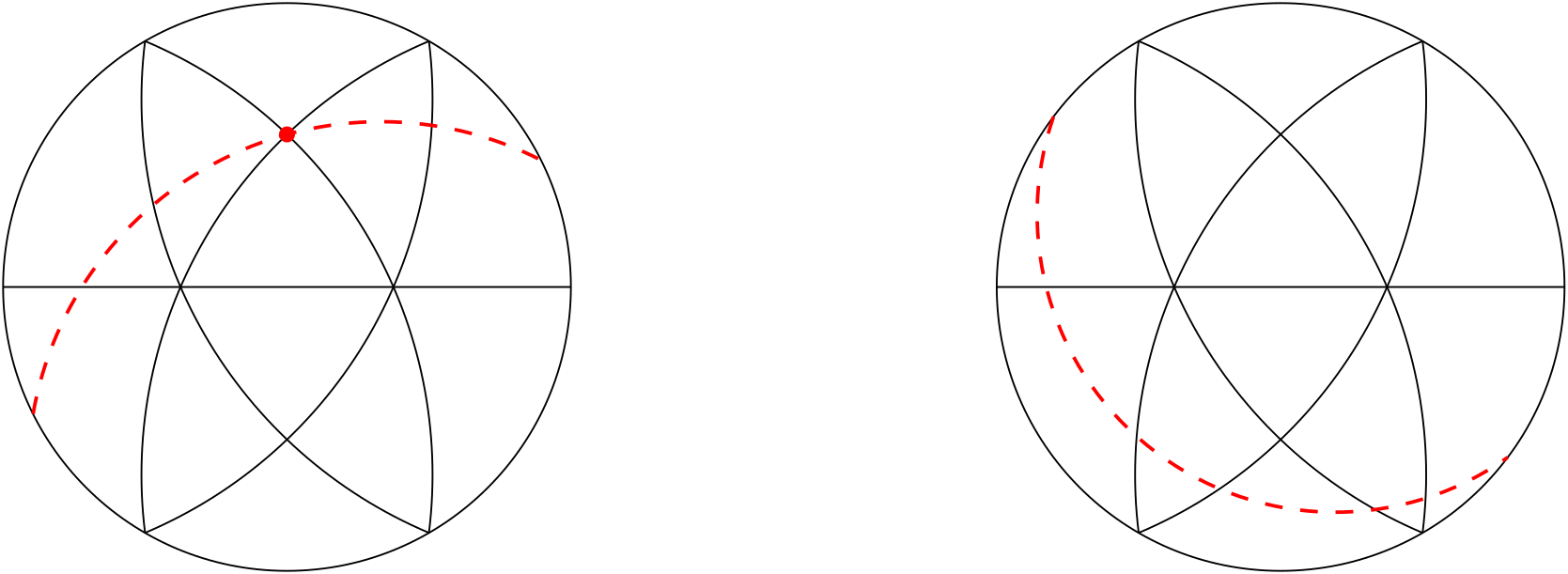}
\caption{Two copies of the spherical representation of an arrangement $\arr$ in $\RR^3$,
         each with a different hyperplane $\rmH$ not in $\arr$ (dashed, in red).
         The hyperplane on the left is not generic since it contains the marked flat of rank $1$ of $\arr$.
         The hyperplane on the right is generic.}
\label{f:gen-hyp}
\end{figure}

A halfspace $\sfh$ is \defn{generic with respect to $\arr$} if its bounding hyperplane is generic with respect to $\arr$.
Let $\sfh$ be such a halfspace.
A
result of Greene and Zaslavsky \citep[Theorem 3.2]{gz83whitney} shows that the number of regions $C \in \cham[\arr]$ completely contained in $\sfh$ is $|\mu(\bot,\top)|$.
Note that for all flats $\rmX$, the halfspace $\sfh \cap \rmX$ is generic with respect to the arrangement $\arr^\rmX$.
As a straightforward generalization of Greene and Zaslavsky's result, and in view of the definition of the cocharacteristic polynomial in \cref{eq:cochar}, we obtain:
\begin{equation}\label{eq:cochar-faces-arr}
	\Psi_\arr(z) = \sum_{F \subseteq \sfh} z^{\dim(F)},
\end{equation}
where the sum is over all the faces $F \in \face[\arr]$ that are contained in $\sfh$.

Given a subset $K$ of the ambient space, let $\face_\arr(K) \subseteq \face[\arr]$ and $\cham_\arr(K) \subseteq \cham[\arr]$ denote the collection of faces and regions, respectively, of $\arr$ contained in $K$.
When the arrangement $\arr$ is clear from the context, we drop $\arr$ from the notation and simply write $\face(K)$ and $\cham(K)$.
If the arrangement $\arr$ is simplicial, then $\face(K)$ can be viewed as a simplicial complex, where the vertices are the rays of $\arr$ and the central face of $\arr$ corresponds to the empty face of the simplicial complex.
We obtain the following geometric interpretation for the coefficients of $P_\arr(z)$ in the simplicial case.

\begin{figure}[ht]
\includegraphics[height=.2\textheight]{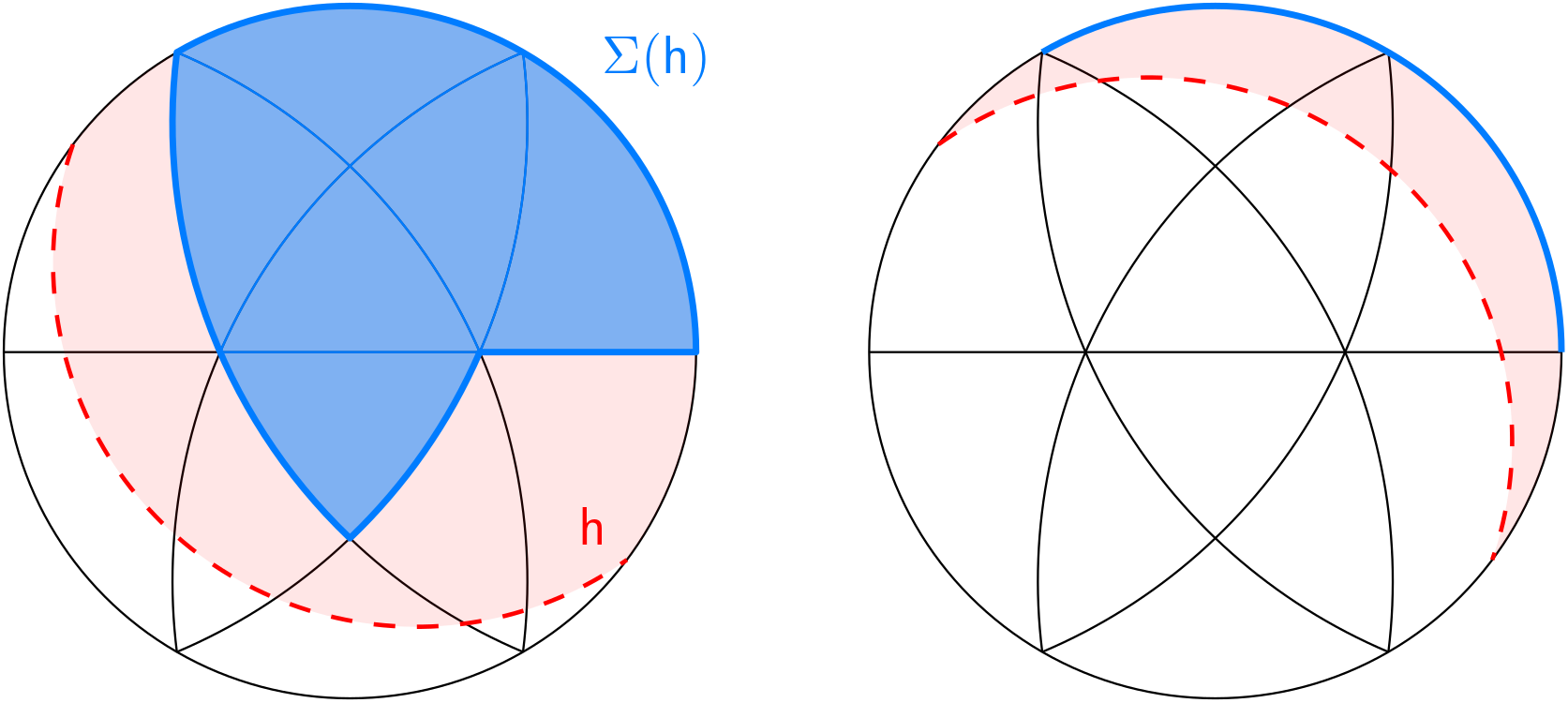}
\caption{The arrangement and the bounding hyperplane of $\sfh$ are those in second example of \cref{f:gen-hyp}.
         In this picture we see the \emph{front} and \emph{back} view of $\arr$.
         Observe that even though $\sfh$ is a convex set, the underlying set of $\face(\sfh)$ is not.
         We have that $\Psi_\arr(z) = 1 + 7z + 12 z^2 + 6 z^3$ and $P_\arr(z) = z^3 + 4 z^2 + z$.}
\label{f:not-convex}
\end{figure}

\begin{proposition}
Let $\arr$ be a simplicial arrangement and $\sfh$ a generic halfspace with respect to $\arr$.
Then,
\[
	P_\arr(z) = z^n h(\face(\sfh), \tfrac{1}{z}).
\]
\end{proposition}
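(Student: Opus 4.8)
The plan is to derive the claimed identity $P_\arr(z) = z^n h(\face(\sfh), \tfrac{1}{z})$ by comparing the two sides through the cocharacteristic polynomial, using the face enumeration formula \cref{eq:cochar-faces-arr} as the bridge. First I would recall that for a simplicial arrangement, $\face(\sfh)$ is a simplicial complex whose $k$-dimensional faces of $\arr$ correspond to $k$-element faces of the complex (the central face $O$, which has $\dim(O) = \dim(\bot)$, corresponds to the empty face). One must be slightly careful here if $\arr$ is not essential: since $P_\arr(z) = P_{{\rm ess}(\arr)}(z)$ and the generic halfspace correspondence passes to the essentialization, I would reduce to the essential case, where $\dim(O) = 0$ and $\dim(F) = k$ means $F$ has exactly $k$ rays. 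So the face-counting polynomial $\sum_{F \subseteq \sfh} t^{\dim(F)}$ is exactly the $f$-polynomial $f(\face(\sfh), t) = \sum_{i} f_{i-1} t^{i}$ of the simplicial complex $\face(\sfh)$, where $f_{i-1}$ counts its $(i-1)$-dimensional faces. Combined with \cref{eq:cochar-faces-arr}, this gives $\Psi_\arr(z) = f(\face(\sfh), z)$.

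Next I would invoke the relation \cref{eq:cochar-to-PEul} between the Primitive Eulerian polynomial and the cocharacteristic polynomial, namely $P_\arr(z) = (z-1)^n \Psi_\arr\big(\tfrac{1}{z-1}\big)$, and substitute the formula just obtained: $P_\arr(z) = (z-1)^n f\big(\face(\sfh), \tfrac{1}{z-1}\big)$. At this point it remains purely a matter of the standard $f$-to-$h$ conversion for a simplicial complex of dimension $n-1$ (equivalently, $\rank(\arr) = n$ after essentialization, so $\face(\sfh)$ has faces of size at most $n$). The defining relation is $h(\Delta, x) = \sum_i f_{i-1} x^i (1-x)^{n-i}$, or equivalently $\sum_i f_{i-1} t^i = \sum_j h_j\, t^j (1+t)^{\,?}$—I would just carry out the substitution carefully: writing $f(\Delta, t) = \sum_{i=0}^{n} f_{i-1} t^i$ and using the identity $(z-1)^n f(\Delta, \tfrac{1}{z-1}) = \sum_i f_{i-1} (z-1)^{n-i} = z^n \sum_i f_{i-1} (1 - \tfrac1z)^{n-i} (\tfrac1z)^{i} \cdot z^{?}$, one recognizes the right-hand side as $z^n h(\Delta, \tfrac1z)$ by the standard $h$-polynomial formula $h(\Delta, x) = \sum_{i=0}^n f_{i-1} x^i (1-x)^{n-i}$ evaluated at $x = \tfrac1z$.

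The main (and really only) obstacle is bookkeeping: getting the degree shifts and the empty-face conventions exactly right, in particular making sure the "$n$" appearing in $z^n$ and $(z-1)^n$ is consistent with the top dimension of $\face(\sfh)$ (which is $\rank(\arr)$, possibly less than the ambient dimension if $\arr$ is not essential—hence the reduction to the essential case is what makes the exponent $n$ literally correct, or one notes $P_\arr$ is unchanged and both $z^n$-factors are genuinely governed by $\rank(\arr)$). I would also remark that the previously-cited Greene–Zaslavsky theorem guarantees the top coefficient: the number of regions in $\sfh$ is $|\mu(\bot,\top)| = f_{n-1}$, matching the monicity of $P_\arr(z)$ observed earlier. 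Once the essentialization reduction is in place, the computation is a one-line substitution, so I would present it compactly:
\[
	P_\arr(z) = (z-1)^n \Psi_\arr\!\left(\tfrac{1}{z-1}\right)
	= \sum_{F \subseteq \sfh} (z-1)^{\,n - \dim(F)}
	= z^n \sum_{F \subseteq \sfh} \left(\tfrac1z\right)^{\dim(F)}\!\!\left(1 - \tfrac1z\right)^{n-\dim(F)}
	= z^n\, h\!\left(\face(\sfh), \tfrac1z\right),
\]
where the last equality is the definition of the $h$-polynomial of the simplicial complex $\face(\sfh)$.
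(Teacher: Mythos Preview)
Your proof is correct and follows essentially the same approach as the paper: both identify $\Psi_\arr(z)$ with the $f$-polynomial of $\face(\sfh)$ via \cref{eq:cochar-faces-arr}, then pass through the standard $f$-to-$h$ substitution and \cref{eq:cochar-to-PEul}. The paper runs the computation from $z^n h(\face(\sfh),\tfrac1z)$ toward $P_\arr(z)$ while you go the other way, but the content is identical; your explicit reduction to the essential case is a welcome bit of extra care that the paper leaves implicit.
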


\begin{proof}
Formula \cref{eq:cochar-faces-arr} is equivalent to $\Psi_\arr(z) = f(\face(\sfh), z)$.
Thus,
\begin{align*}
	z^n h(\face(\sfh), \tfrac{1}{z}) = 
	z^n (1 - \tfrac{1}{z})^n f \Big( \face(\sfh), \dfrac{1/z}{1 - 1/z} \Big) = 
	(z-1)^n \Psi_\arr\big( \dfrac{1}{z-1} \big).
\end{align*}
The result follows by \cref{eq:cochar-to-PEul}.
\end{proof}

\begin{remark}
Reiner \citep{reiner90thesis} and Stembridge \citep{stembridge08coxcones} studied $h$-vectors of complexes
of the form $\face(K)$, where $\arr$ is a Coxeter arrangement and $K$ is a \emph{cone} (parset) of the arrangement--a convex set obtained as the union of some regions of $\arr$.
The complexes obtained form a generic halfspace are in general not convex; see for instance \cref{f:not-convex}.
In that example, $\face(\sfh)$ is not convex, and therefore not a cone of the corresponding arrangement (the essentialization of the braid arrangement in $\RR^4$).
\end{remark}

\subsection{The weak order}

The definitions and results of this section hold for arbitrary linear arrangements, including non-simplicial arrangements.

Recall that a hyperplane $\rmH \in \arr$ \defn{separates} faces $F,G \in \face[\arr]$ if $\{ \sigma_\rmH(F) , \sigma_\rmH(G) \} = \{-,+\}$.
Given any two regions $C,C' \in \cham[\arr]$, let ${\rm sep}(C,C')$ denote the collection of hyperplanes $\rmH \in \arr$ that separate $C$ and $C'$.
Fix a region $B \in \cham[\arr]$ which we call a \defn{base region}, and
consider the following relation on $\cham[\arr]$:
\[
	C \preceq_B D
	\qqiff
	{\rm sep}(B,C) \subseteq {\rm sep}(B,D).
\]
Then, $\preceq_B$ is a partial order with minimum element $B$ and maximum element $\opp{B}$, the region {opposite} to the base region $B$.
This order is called the \defn{weak order of $\arr$ with base region $B$},
and was independently introduced by Mandel \citep{mandel82thesis} and Edelman \citep{edelman84regions}.
Mandel's definition was given in the context of oriented matroids, where it was called the \emph{Tope graph} of $\arr$.
This order was further studied by Bj\"{o}rner, Edelman and Ziegler \citep{bez90hyperplane}, who showed that whenever $\arr$ is simplicial, the weak order with respect to any base region is a lattice.
When the base region $B$ is clear from the context, we write $\preceq$ instead of $\preceq_B$.

For any face $F \in \face[\arr]$, the collection of regions that contain it,
\[
	\cham_F := \{C \in \cham[\arr] \,:\, F \leq C \},
\]
is called the \defn{top-star} of $F$.
See \cref{f:top-stars} for an example.

\begin{lemma}
For any base region $B$ and face $F$, the top-star $\cham_F$ is an interval in the partial order~$\preceq_B$:
its minimum element is $FB$ and its maximum element is $F\opp{B}$.
\end{lemma}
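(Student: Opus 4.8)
The plan is to work entirely with sign sequences and separation sets, exploiting the definition of the Tits product in \cref{eq:Tits-product} together with the characterization of $\preceq_B$ via inclusion of separation sets. First I would record the following elementary observation about the Tits product: for a face $F$ and a region $C$, the region $FC$ has $\sigma_\rmH(FC) = \sigma_\rmH(F)$ on every hyperplane $\rmH$ not containing $F$, and $\sigma_\rmH(FC) = \sigma_\rmH(C)$ on every $\rmH \supseteq F$. In particular $FC$ contains $F$ (this is already noted in \cref{eq:Tits-properties}), so $FB$ and $F\opp{B}$ both lie in $\cham_F$. Conversely, I would show that every region $C \in \cham_F$ satisfies $C = FC$: since $F \leq C$, we have $\sigma_\rmH(C) = \sigma_\rmH(F)$ whenever $\sigma_\rmH(F) \neq 0$, which is exactly the condition making \cref{eq:Tits-product} return $\sigma_\rmH(C)$. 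So $\cham_F = \{FC : C \in \cham[\arr]\}$, and the map $C \mapsto FC$ is a retraction of $\cham[\arr]$ onto $\cham_F$.

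The heart of the argument is then a separation-set computation. For $C \in \cham_F$, I want to compare ${\rm sep}(B, C)$ with ${\rm sep}(B, FB)$ and ${\rm sep}(B, F\opp B)$. Split the hyperplanes of $\arr$ into those containing $F$ and those not. On a hyperplane $\rmH \not\supseteq F$: here $\sigma_\rmH(FB) = \sigma_\rmH(C) = \sigma_\rmH(F\opp B) = \sigma_\rmH(F)$, so all three regions agree, hence $\rmH$ separates $B$ from one of them iff it separates $B$ from all of them. On a hyperplane $\rmH \supseteq F$: then $\rmH$ does not contain $B$ (as $\rmH$ is a hyperplane and $B$ a region), say $\sigma_\rmH(B) = +$; we have $\sigma_\rmH(FB) = \sigma_\rmH(B) = +$, so $\rmH \notin {\rm sep}(B, FB)$, while $\sigma_\rmH(F\opp B) = \sigma_\rmH(\opp B) = -$, so $\rmH \in {\rm sep}(B, F\opp B)$. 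For a general $C \in \cham_F$, $\sigma_\rmH(C)$ on such an $\rmH$ can be either $+$ or $-$. Assembling these two cases: ${\rm sep}(B,FB)$ consists exactly of the hyperplanes $\rmH \not\supseteq F$ that separate $B$ from $F$, while ${\rm sep}(B, F\opp B)$ is that same set together with all hyperplanes containing $F$; and for any $C\in\cham_F$, ${\rm sep}(B,C)$ contains ${\rm sep}(B,FB)$ and is contained in ${\rm sep}(B,F\opp B)$. By the definition of $\preceq_B$ this gives $FB \preceq_B C \preceq_B F\opp B$ for all $C \in \cham_F$, so $\cham_F$ is contained in the interval $[FB, F\opp B]$.

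It remains to check the reverse inclusion: every $D$ with $FB \preceq_B D \preceq_B F\opp B$ lies in $\cham_F$, i.e. $F \leq D$. Suppose not: then $\sigma_\rmH(D) \neq \sigma_\rmH(F) \neq 0$ for some $\rmH$ with $\rmH \not\supseteq F$. On such an $\rmH$ we computed that $FB$ and $F\opp B$ carry the same sign $\sigma_\rmH(F)$, so $\rmH$ is in ${\rm sep}(B,FB)$ iff it is in ${\rm sep}(B, F\opp B)$; the containment $FB\preceq_B D\preceq_B F\opp B$ forces $\rmH \in {\rm sep}(B,D)$ iff $\rmH\in{\rm sep}(B,FB)$, i.e. $\sigma_\rmH(D)$ and $\sigma_\rmH(FB)=\sigma_\rmH(F)$ are on the same side of $\rmH$ relative to $B$ — but both $D$ and $FB$ have a nonzero sign on $\rmH$, so this says $\sigma_\rmH(D) = \sigma_\rmH(F)$, a contradiction. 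Hence $F \leq D$, completing the equality $\cham_F = [FB, F\opp B]$. The main obstacle is bookkeeping the two classes of hyperplanes carefully and making sure the sign comparisons are stated symmetrically in $B$ and $\opp B$; no deep input is needed beyond \cref{eq:Tits-product,eq:Tits-properties} and the definition of $\preceq_B$.
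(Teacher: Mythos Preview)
Your argument is correct and follows essentially the same route as the paper: split the hyperplanes of $\arr$ into those containing $F$ and those not, and use the sign-sequence description of the Tits product \cref{eq:Tits-product} to read off separation sets. The paper compresses all of this into two observations --- that hyperplanes separating $B$ from $F$ separate $B$ from every region in $\cham_F$, and that hyperplanes through $F$ lie in ${\rm sep}(B,F\opp B)$ but not in ${\rm sep}(B,FB)$ --- and declares the result a direct consequence. Your write-up is more explicit, and in particular you spell out the reverse inclusion $[FB,F\opp B]\subseteq\cham_F$ that the paper leaves to the reader; but the underlying idea is identical.
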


\begin{proof}
The result is a direct consequence from the next two observations, which follow from the Tits product description in \cref{eq:Tits-product}.
First, if $\rmH \in \arr$ separates $B$ and $F$, then it also separates $B$ and any region in $\cham_F$.
Moreover, any hyperplane $\rmH \in \arr$ containing $F$ does not separate $B$ and $FB$, and does separate $B$ and $F\opp{B}$.
\end{proof}

\begin{figure}[ht]
\includegraphics[height=.2\textheight]{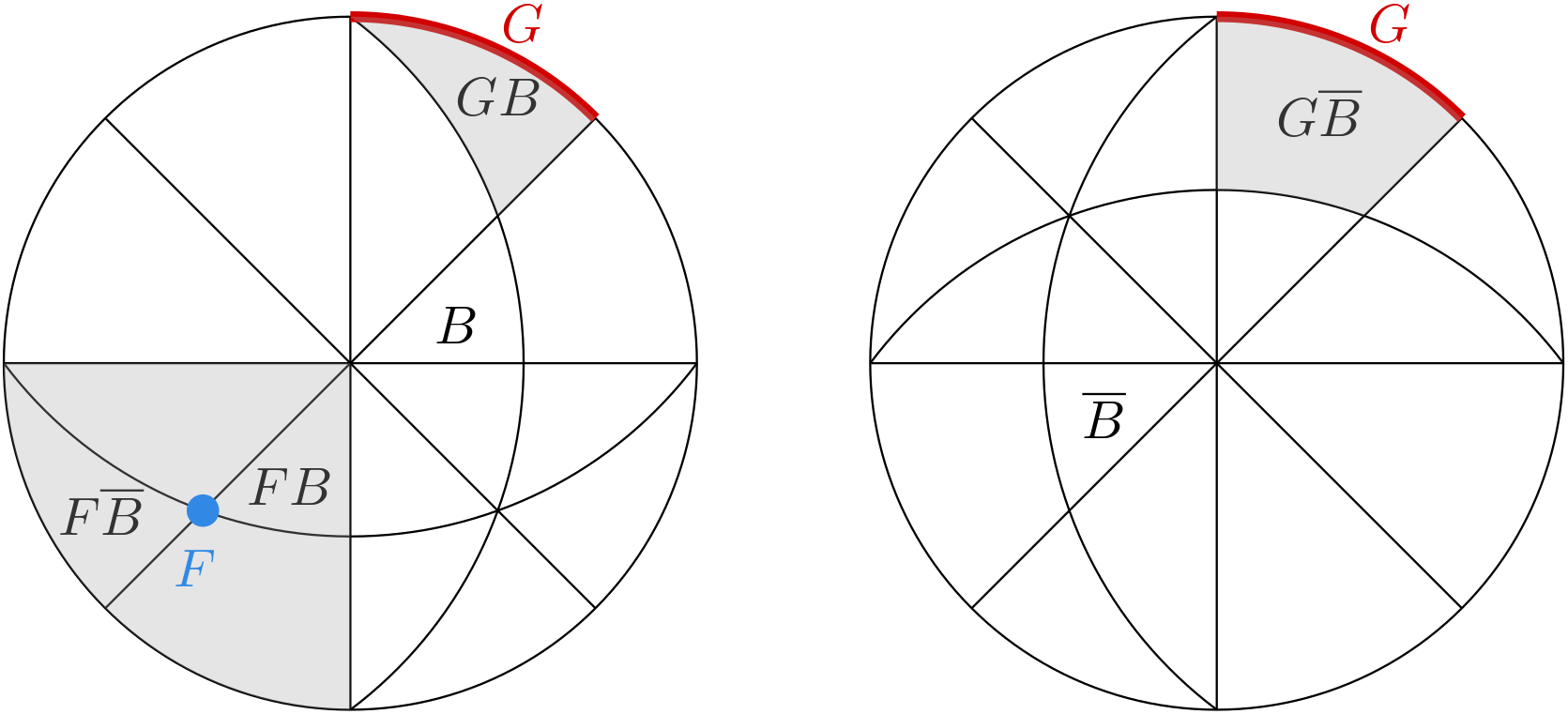}
\caption{A non-simplicial arrangement $\arr$.
         The faces $F$ (blue) and $G$ (red) have rank $1$ and $2$, respectively.
         The top-stars $\cham_F$ and $\cham_G$ (shaded) contain a minimum and maximum element in the order $\preceq_B$.}
\label{f:top-stars}
\end{figure}

\subsection{Descents sets}

Fix a base region $B \in \cham[\arr]$.
Given a polyhedral subcomplex $\Delta \subseteq \face[\arr]$, its \defn{descent set} is
\[
	\Des_B(\Delta) := \{ F \in \face[\arr] \,:\, F \opp{B} \in \Delta \}.
\]
Observe that, since $F \leq FC$ for any faces $F$ and $C$ (see \cref{eq:Tits-properties}), we have that $\Des_B(\Delta) \subseteq \Delta$.
However, $\Des_B(\Delta)$ might fail to be a complex, as illustrated in \cref{f:not-subcomplex}.

\begin{figure}[ht]
\includegraphics[height=.2\textheight]{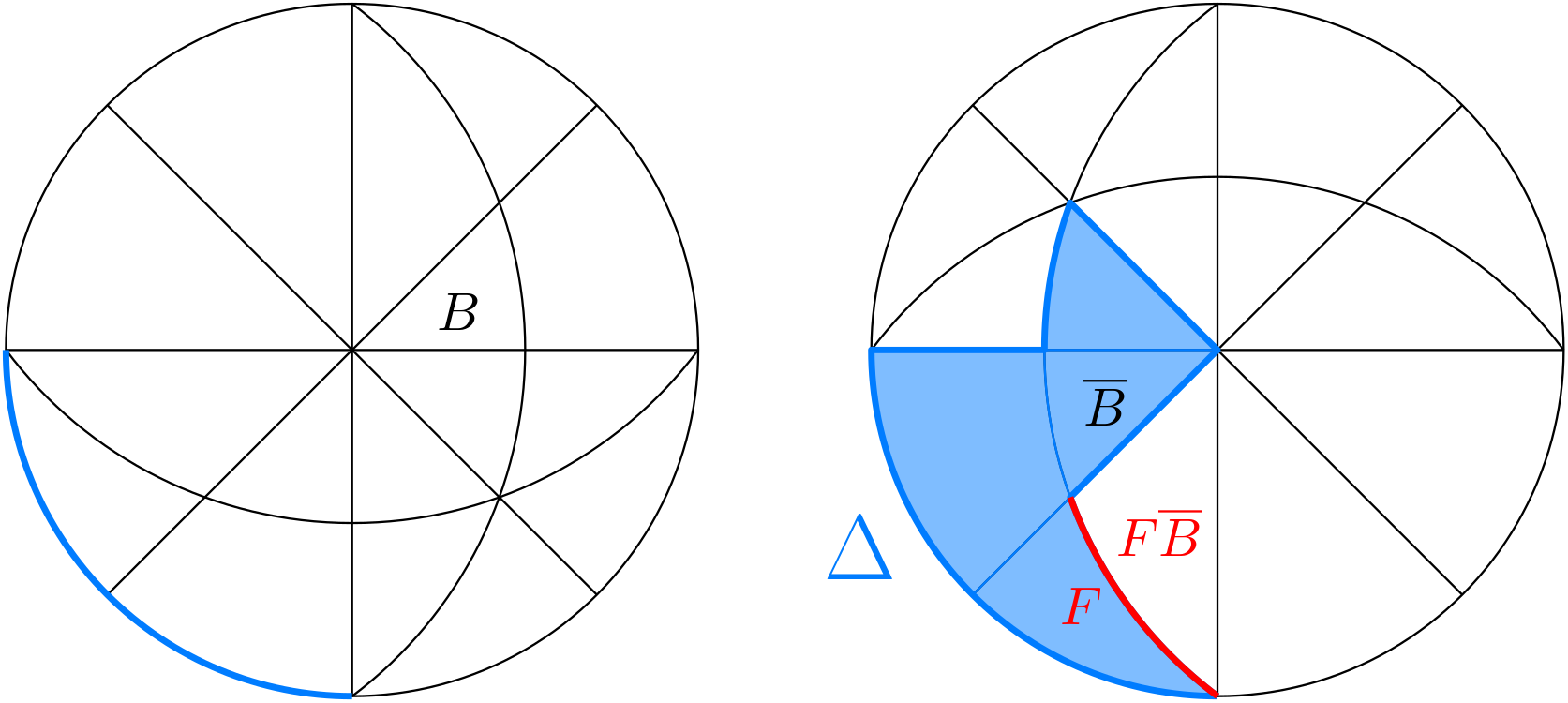}
\caption{A subcomplex $\Delta \subseteq \face[\arr]$ in blue, it includes face $F$ in red.
         Observe, however, that $F\opp{B} \notin \Delta$.
         Thus, $F \notin \Des_B(\Delta)$, $\Des_B(\Delta) \neq \Delta$, and $\Des_B(\Delta)$ is not a complex.}
\label{f:not-subcomplex}
\end{figure}

\begin{lemma}\label{l:subcomplex-is-complex}
Let $\Delta \subseteq \face[\arr]$ be a nonempty polyhedral subcomplex.
Then,
$\Des_B(\Delta) = \Delta$
if and only if $\Delta$ is pure and $\cham(\Delta) := \Delta \cap \cham[\arr]$ is a nonempty upper set of the partial order $\preceq_B$.
\end{lemma}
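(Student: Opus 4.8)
The plan is to reduce everything to the Tits product formula \cref{eq:Tits-product}, the basic property $F \leq F\opp{B}$ from \cref{eq:Tits-properties}, and the lemma above that identifies the top-star $\cham_F$ with the interval $[FB, F\opp{B}]$ in $\preceq_B$. I will use throughout that a face is determined by its sign sequence, that the regions are exactly the faces with no $0$ entry in their sign sequence, and that $F \leq G$ if and only if $\sigma_\rmH(F) \in \{0, \sigma_\rmH(G)\}$ for all $\rmH \in \arr$. Since $F \leq F\opp{B}$ for every face $F$, the inclusion $\Des_B(\Delta) \subseteq \Delta$ always holds, so the assertion $\Des_B(\Delta) = \Delta$ is equivalent to the implication $F \in \Delta \Rightarrow F\opp{B} \in \Delta$, and I would prove the two directions of the equivalence separately.

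Assume first that $F \in \Delta \Rightarrow F\opp{B} \in \Delta$. To see that $\Delta$ is pure, I would show that every maximal face of $\Delta$ is a region: if $F$ is maximal in $\Delta$, then $F\opp{B} \in \Delta$ together with $F \leq F\opp{B}$ forces $F = F\opp{B}$, and by \cref{eq:Tits-product} the sign sequence of $F\opp{B}$ has no $0$ entry (as $\opp{B}$ is a region), so $F$ is a region. Hence all maximal faces of $\Delta$ are regions, so $\Delta$ is pure and $\cham(\Delta) \neq \emptyset$. For the upper set condition it suffices to check closure under cover relations, so let $C \lessdot_B D$ be a cover in $\preceq_B$ with $C \in \cham(\Delta)$. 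Here I would invoke the standard fact that a cover in the weak order of an arrangement is a pair of adjacent regions: ${\rm sep}(C,D) = \{\rmH_0\}$ for a single hyperplane $\rmH_0 \in {\rm sep}(B,D) \setminus {\rm sep}(B,C)$, and $C, D$ share a facet $F$ with $\sigma_{\rmH_0}(F) = 0$ and $\sigma_\rmH(F) = \sigma_\rmH(C) = \sigma_\rmH(D)$ for $\rmH \neq \rmH_0$, so that $\cham_F = \{C, D\}$. Then $F \leq C \in \Delta$ gives $F \in \Delta$, hence $F\opp{B} \in \Delta$; and since $\sigma_{\rmH_0}(F\opp{B}) = \sigma_{\rmH_0}(\opp{B}) = -\sigma_{\rmH_0}(B) = \sigma_{\rmH_0}(D) \neq \sigma_{\rmH_0}(C)$ while $F\opp{B} \in \cham_F = \{C, D\}$, we get $F\opp{B} = D$, so $D \in \cham(\Delta)$.

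Conversely, assume $\Delta$ is pure and $\cham(\Delta)$ is a nonempty upper set of $\preceq_B$. Since $\cham(\Delta) \neq \emptyset$, the complex $\Delta$ contains a region, which is a maximal face of $\face[\arr]$ and hence a maximal face of $\Delta$; by purity every maximal face of $\Delta$ is then a region, so each $F \in \Delta$ satisfies $F \leq C$ for some $C \in \cham(\Delta)$. By the lemma on top-stars, $F\opp{B}$ is the $\preceq_B$-maximum of $\cham_F$, and $C \in \cham_F$, so $C \preceq_B F\opp{B}$; as $\cham(\Delta)$ is an upper set, $F\opp{B} \in \cham(\Delta) \subseteq \Delta$, i.e.\ $F \in \Des_B(\Delta)$. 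Thus $\Delta \subseteq \Des_B(\Delta)$, and together with the always-valid reverse inclusion this gives $\Des_B(\Delta) = \Delta$.

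The one step I expect to be the real obstacle is the structural fact used in the first direction: that a cover $C \lessdot_B D$ in the weak order is always a pair of adjacent regions sharing a facet. If this is available from Bj\"{o}rner--Edelman--Ziegler \citep{bez90hyperplane} it can simply be cited; otherwise I would supply a short minimal-gallery argument, using that every region $E$ on a minimal gallery from $C$ to $D$ satisfies $C \preceq_B E \preceq_B D$, so that a cover cannot have gallery distance at least $2$. The remaining steps are routine manipulations of sign sequences and of the Tits product, together with the small observation that "$\Delta$ pure" only upgrades to "pure with the regions as maximal faces" once one knows $\cham(\Delta) \neq \emptyset$, which is why that hypothesis appears.
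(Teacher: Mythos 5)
Your proof is correct and follows essentially the same route as the paper's: both directions hinge on the top-star lemma identifying $F\opp{B}$ as $\max_{\preceq_B}\cham_F$, and the forward direction uses the standard fact that a cover in the weak order is a pair of adjacent regions sharing a facet (the paper also uses this without proof, via the phrase ``the common facet of $C$ and $D$''). The only cosmetic divergence is in how you establish purity and nonemptiness of $\cham(\Delta)$: you argue that every maximal face $F$ of $\Delta$ satisfies $F = F\opp{B}$ and is therefore a region, while the paper observes directly that $O \in \Delta$ forces $\opp{B} = O\opp{B} \in \Delta$ and that each $F \in \Delta$ lies below $F\opp{B} \in \Delta$; both are equally short and correct.
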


\begin{proof}
Suppose $\Des_B(\Delta) = \Delta$.
Since $O \in \Delta$, we have $\opp{B} = O \opp{B} \in \Delta$, so $\cham(\Delta)$ is not empty.
Moreover, every face $F \in \Delta$ is contained in an element of $\cham(\Delta)$, namely $F \opp{B}$, so $\Delta$ is pure.
Let $C \in \cham(\Delta)$ and $D \in \cham[\arr]$ be a region covering $C$ in the order $\preceq_B$.
Let $F = C \cap D$ be the common facet of $C$ and $D$.
In particular, $F \leq C$ and $F \in \Delta$.
Since the top-star of $F$ is simply $\cham_F = \{C,D\}$ and $C \preceq_B D$, we have $F \opp{B} = \max_{\preceq_B} \cham_F = D \in \Des_B(\Delta)$.
Since this occurs for every $C \in \cham(\Delta)$ and region $D$ covering $C$, we conclude that $\cham(\Delta)$ is an upper set.

Now assume $\Delta$ is pure and $\cham(\Delta)$ is a nonempty upper set of $\cham$.
Recall that $\Des_B(\Delta) \subseteq \Delta$ holds in general, we prove the reverse inclusion.
Let $F \in \Delta$ and $C \in \cham(\Delta)$ containing $F$, which exists since $\Delta$ is pure.
In particular, $C \in \cham_F$ and $C \preceq_B \max_{\preceq_B} \cham_F = F \opp{B}$.
Since $\cham(\Delta)$ is an upper set, we have that $F \opp{B} \in \Delta$, so $F \in \Des_B(\Delta)$, as we wanted to show.
\end{proof}

\subsection{The simplicial case}

Let $\arr$ be a simplicial arrangement.
Given a base region $B \in \cham[\arr]$, we let $\des_{\preceq_B}(C)$ denote the number of regions covered by $C$ in the weak order $\preceq_B$.
If the base region $B$ is clear from context, we simply write $\des(C)$.
The collection of faces $F \leq C$ such that $F \opp{B} = C$ forms a boolean poset of rank $\des(C)$, see for example \citep[Section 7.1.1]{am17}.
Therefore,
\begin{equation}\label{eq:des-to-codim1}
(z+1)^{\des(C)} = \sum_{F\,:\, F \opp{B} = C} z^{\codim(F)}.
\end{equation}

The following result is obtained by combining \cref{eq:des-to-codim1} and \cref{l:subcomplex-is-complex}.

\begin{proposition}\label{p:des-codim}
Let $\Delta \subseteq \face[\arr]$ be a pure complex such that $\cham(\Delta)$ is a nonempty upper set.
Then,
\[
\sum_{C \in \cham(\Delta)} (z+1)^{\des(C)} = \sum_{F \in \Delta} z^{\codim(F)}.
\]
\end{proposition}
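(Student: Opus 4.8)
The plan is to evaluate the right-hand side by partitioning the faces of $\Delta$ according to the region $F\opp{B}$ they produce under the Tits product, and then to recognize each block of the partition via \cref{eq:des-to-codim1}. The whole statement is, in effect, a reindexing of a sum once the correct bookkeeping is in place.

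First I would record two elementary facts about the Tits product. For any face $F \in \face[\arr]$ the product $F\opp{B}$ is a region: since $\opp{B} \in \cham[\arr]$, every entry of $\sigma(\opp{B})$ is nonzero, so by \cref{eq:Tits-product} every entry of $\sigma(F\opp{B})$ is nonzero as well. Likewise $C\opp{B} = C$ for every region $C$, again because $\sigma(C)$ has no zero entries. Next I would invoke \cref{l:subcomplex-is-complex}: the hypotheses that $\Delta$ is pure and that $\cham(\Delta)$ is a nonempty upper set of $\preceq_B$ yield $\Des_B(\Delta) = \Delta$, which says exactly that $F \in \Delta$ implies $F\opp{B} \in \Delta$; since $F\opp{B}$ is a region, in fact $F\opp{B} \in \cham(\Delta)$. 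Combining these observations, $F \mapsto F\opp{B}$ is a well-defined map $\Delta \to \cham(\Delta)$ that restricts to the identity on $\cham(\Delta)$, so its fibers partition the source:
\[
\Delta = \bigsqcup_{C \in \cham(\Delta)} \{ F \in \face[\arr] \,:\, F\opp{B} = C \}.
\]

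Granting this partition, I would finish with a direct computation:
\[
\sum_{F \in \Delta} z^{\codim(F)}
= \sum_{C \in \cham(\Delta)} \ \sum_{F \,:\, F\opp{B} = C} z^{\codim(F)}
= \sum_{C \in \cham(\Delta)} (z+1)^{\des(C)},
\]
where the last equality is \cref{eq:des-to-codim1} applied to each $C \in \cham(\Delta)$, and this is the one place the simpliciality of $\arr$ is used.

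I do not expect a genuine obstacle here. The only substantive input beyond routine manipulation is the identity $\Des_B(\Delta) = \Delta$ — equivalently, the fact that $F \mapsto F\opp{B}$ carries $\Delta$ into $\Delta$ — which is precisely where \cref{l:subcomplex-is-complex}, and hence the upper-set hypothesis on $\cham(\Delta)$, is essential: without it the fibers of the map would fail to cover $\Delta$ and the identity of generating polynomials would break. The surjectivity of $F \mapsto F\opp{B}$ onto $\cham(\Delta)$ (needed so that every $C$ contributes) is immediate from $C\opp{B} = C$, and the purity of $\Delta$ guarantees in addition that every $F$ really lies in some fiber indexed by an element of $\cham(\Delta)$.
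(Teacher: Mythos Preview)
Your argument is correct and is exactly the approach the paper indicates: it states only that the result follows by combining \cref{eq:des-to-codim1} and \cref{l:subcomplex-is-complex}, and you have filled in the details of how these combine via the partition of $\Delta$ into the fibers of $F \mapsto F\opp{B}$. The one point you leave implicit is that each full fiber $\{F \in \face[\arr] : F\opp{B} = C\}$ lies inside $\Delta$ whenever $C \in \cham(\Delta)$, but this is immediate since $F \leq F\opp{B} = C$ and $\Delta$ is a subcomplex.
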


\begin{remark}\label{r:shell}
The preceding result can also be deduced using the fact that any linear extension of the weak order gives a shelling of the corresponding complex.
See for example \citep[Proposition 4.3.2]{blswz} for a result in the language of oriented matroids,
and \citep[Proposition 3.4]{reading05fans} for a generalization to complete polyhedral fans.
\end{remark}

When $\Delta = \face(\sfh)$ for a generic halfspace $\sfh$, \cref{p:des-codim} takes the following form.

\begin{proposition}\label{p:gen-upset-PEul}
Let $\sfh$ be a generic halfspace with respect to $\arr$ such that
$\cham(\sfh)$ is an upper set.
Then,
\[
P_\arr(z) = \sum_{C \subseteq \sfh} z^{\des(C)}.
\]
\end{proposition}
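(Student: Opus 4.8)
The plan is to apply \cref{p:des-codim} to the complex $\Delta = \face(\sfh)$ and then convert the resulting identity in the variable $z$ into the claimed formula for $P_\arr(z)$. First I would check that $\Delta = \face(\sfh)$ satisfies the hypotheses of \cref{p:des-codim}: it is a polyhedral subcomplex of $\face[\arr]$ (faces contained in a halfspace are closed under taking faces), it is pure because $\sfh$ is generic (every face contained in $\sfh$ lies in a region contained in $\sfh$, which is exactly the content that lets the Greene--Zaslavsky-type count \cref{eq:cochar-faces-arr} work), and by hypothesis $\cham(\sfh) = \cham(\Delta)$ is a nonempty upper set of $\preceq_B$ — here I would note that it is nonempty because $\opp{B} = O\opp{B}$ is the maximum of the weak order and $O \subseteq \sfh$ forces, via the upper-set assumption applied to any region in $\sfh$, that $\opp{B} \in \cham(\sfh)$; alternatively the genericity of $\sfh$ already guarantees $|\mu(\bot,\top)| \geq 1$ regions inside $\sfh$.

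With the hypotheses verified, \cref{p:des-codim} gives
\[
\sum_{C \subseteq \sfh} (z+1)^{\des(C)} = \sum_{F \in \face(\sfh)} z^{\codim(F)}.
\]
The right-hand side is $f(\face(\sfh), z)$ up to the usual reindexing: writing $\codim(F) = n - \dim(F)$, it equals $z^n \sum_{F \subseteq \sfh} z^{-\dim(F)} = z^n \Psi_\arr(1/z)$ by \cref{eq:cochar-faces-arr}. Hence $\sum_{C \subseteq \sfh}(z+1)^{\des(C)} = z^n\,\Psi_\arr(1/z)$. Now substitute $z \mapsto z-1$: the left-hand side becomes $\sum_{C \subseteq \sfh} z^{\des(C)}$, and the right-hand side becomes $(z-1)^n \Psi_\arr\!\big(\tfrac{1}{z-1}\big)$, which is exactly $P_\arr(z)$ by \cref{eq:cochar-to-PEul}. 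This yields the claimed identity.

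The only real subtlety — and the step I expect to need the most care — is confirming that $\face(\sfh)$ is genuinely pure of dimension $n$ when $\sfh$ is a generic halfspace, since \cref{p:des-codim} is stated for pure complexes and the pictures (e.g. \cref{f:not-convex}) emphasize that $\face(\sfh)$ need not be convex. Purity should follow from the generic-halfspace theory already invoked for \cref{eq:cochar-faces-arr}: a face $F \subseteq \sfh$ of positive codimension is a face of the restriction data of $\arr$, and genericity of $\sfh$ (its bounding hyperplane meets no flat other than $\bot$) forces $F$ to lie on the boundary of some region that is itself contained in $\sfh$ — one can see this by pushing $F$ slightly into $\sfh$ along a direction transverse to the bounding hyperplane and away from all hyperplanes of $\arr$ through $F$. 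Everything else is a direct substitution, so once purity and the upper-set hypothesis (the latter given) are in hand, the proof is short.
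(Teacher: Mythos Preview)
Your proposal is correct and follows essentially the same route as the paper: verify the hypotheses of \cref{p:des-codim} for $\Delta = \face(\sfh)$, then combine \cref{eq:cochar-faces-arr} with the change of variables $z \mapsto z-1$ and \cref{eq:cochar-to-PEul}. The chain of equalities is exactly what the paper writes.

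The one place where the paper's argument differs from yours---and is cleaner---is the verification that $\face(\sfh)$ is pure. You correctly flag this as the only real subtlety, and your ``push $F$ slightly into $\sfh$'' sketch points in the right direction but is not quite a proof as written (one has to argue that such a direction lands you in a region that is \emph{entirely} contained in $\sfh$, not just locally). The paper sidesteps this by taking an affine hyperplane $\rmH' \subseteq \sfh$ parallel to the bounding hyperplane and observing that $F \mapsto F \cap \rmH'$ identifies $\face(\sfh)\setminus\{O\}$ with the bounded complex of the affine arrangement $\arr \cap \rmH'$; purity then follows from Zaslavsky's theorem that the bounded complex of an affine arrangement is pure. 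This is a standard result that dispatches the issue in one line, so it is worth knowing.
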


\begin{proof}
Let $\rmH'$ be an affine hyperplane contained in $\sfh$; it is necessarily a parallel translate of the bounding hyperplane of $\sfh$.
The collection $\arr \cap \rmH' := \{ \rmH \cap \rmH' \,:\, \rmH \in \arr\}$ forms an \emph{affine hyperplane arrangement} in ambient space $\rmH'$.
The bounded faces of $\arr \cap \rmH'$ are precisely those of the form $F \cap \rmH'$ for $F \in \face(\sfh) \setminus \{O\}$.
A result of Zaslavsky \citep[Corollary 9.1]{zaslavsky75facing} shows that the bounded complex of a hyperplane arrangement is pure, and consequently $\face(\sfh)$ is pure.

It now follows from \cref{eq:cochar-faces-arr,p:des-codim} that
\[
P_\arr(z)
= (z-1)^d \Psi_\arr(\tfrac{1}{z-1})
= (z-1)^d \sum_{F \subseteq \sfh} (z-1)^{-\dim(F)}
= \sum_{F \subseteq \sfh} (z-1)^{\codim(F)}
= \sum_{C \subseteq \sfh} z^{\des(C)},
\]
as claimed.
\end{proof}

A halfspace $\sfh$ bounded by a hyperplane in $\arr$ (not generic) satisfies that $\cham(\sfh)$ is an upper set if and only if it contains the region $\opp{B}$.
The same is not true for generic halfspaces, as illustrated in \cref{f:generic-no-upset}.
In what follows we describe a class of simplicial arrangements for which a generic \emph{simultaneous} choice of $B$ and $\sfh$ always satisfy the hypothesis of \cref{p:gen-upset-PEul}.

\begin{figure}[ht]
\includegraphics[height=.2\textheight]{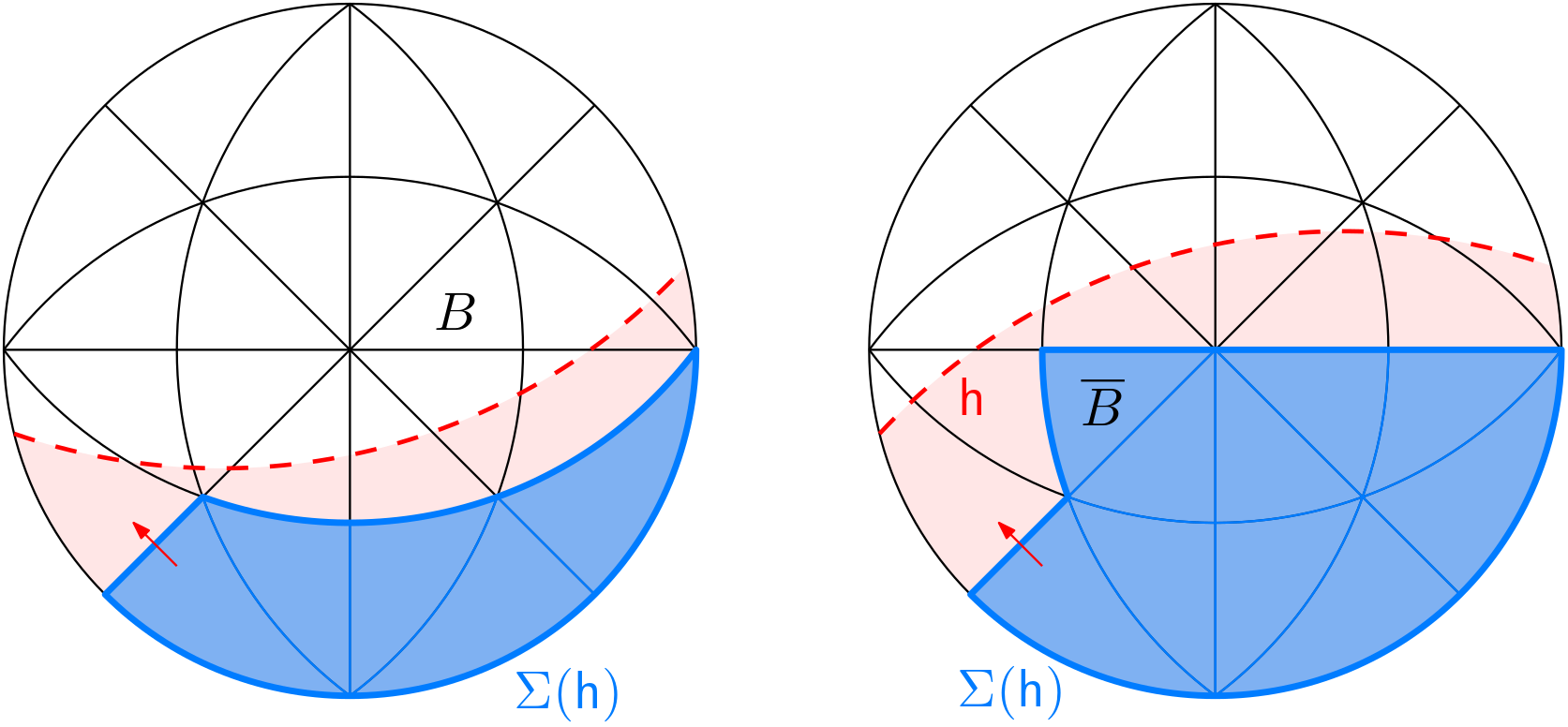}
\caption{Not every halfspace $\sfh$ containing the maximum region $\opp{B}$ satisfies that $\cham(\sfh)$ is an upper set of $\preceq_B$.
			The arrows show two instances of cover relations $C \preceq D$ where $C \in \cham(\sfh)$ yet $D \notin \cham(\sfh)$.}
\label{f:generic-no-upset}
\end{figure}

\subsection{Sharp arrangements}

In this section we use $d$ to denote the dimension of the ambient space, and reserve $n$ to denote normal vectors.

\begin{definition}
We say that an arrangement $\arr$ in $\RR^d$ is \defn{sharp} if it is simplicial, and the angle between any two facets of any region $C \in \cham[\arr]$ is at most $\tfrac{\pi}{2}$.
\end{definition}

Notably, all finite reflection arrangements are sharp.
In particular, the coordinate arrangement in $\RR^d$ is sharp for all $d$.
\cref{f:nonsharp} presents a non-sharp arrangement combinatorially isomorphic to the coordinate arrangement in $\RR^2$.
This shows that sharpness is a geometric and not a combinatorial condition.
Moreover, observe that an arrangement $\arr$ is sharp if and only if its essentialization is sharp.

Assume that $\arr$ is essential and take a region $C \in \cham[\arr]$.
Let $r_1,r_2,\dots,r_d$ be vectors spanning its rays (faces of dimension $1$).
Let $n_1,\dots,n_d$ be the basis of $\RR^d$ dual to $r_1,r_2,\dots,r_d$; that is $\langle n_i , r_j \rangle = \delta_{i,j}$.
It follows that $n_k$ is an inward normal to the facet $F_k \lessdot C$ with rays spanned by $r_1,\dots,r_{k-1},r_{k+1},\dots,r_d$.
Thus, the angle between facets $F_i$ and $F_j$ is $\cos^{-1}(- \tfrac{\langle n_i , n_j \rangle}{|\!|n_i|\!|\,|\!|n_j|\!|})$.
That is, an arrangement $\arr$ is sharp if for all regions $C \in \cham[\arr]$ we have $\langle n_i , n_j \rangle \leq 0$ for $i \neq j$.
In this case, since
\[
n_k =  \langle n_k , n_k \rangle r_k + \sum_{j \neq k} \langle n_k , n_j \rangle r_j,
\]
we have that
\begin{equation}\label{eq:normal-sharp}
	r_k \in \RR_+ \{r_1,\dots,r_{k-1},n_k,r_{k+1},\dots,r_d\},\quad \text{for } k = 1,\dots, d.
\end{equation}

A vector $v \in \RR^d$ is \defn{generic with respect to $\arr$} if $v$ is not contained in any hyperplane of $\arr$.
Given a generic $v \in \RR^d$, let $B(v) \in \cham[\arr]$ be the region of $\arr$ containing $v$, and let $\sfh_v^-$ be the halfspace $\{ x \in \RR^d \,:\, \langle v , x \rangle \leq 0 \}$.
We say that $v$ is \defn{very generic} if in addition $\sfh_v^-$ is generic with respect to $\arr$.

\begin{figure}[ht]
\includegraphics[height=.18\textheight]{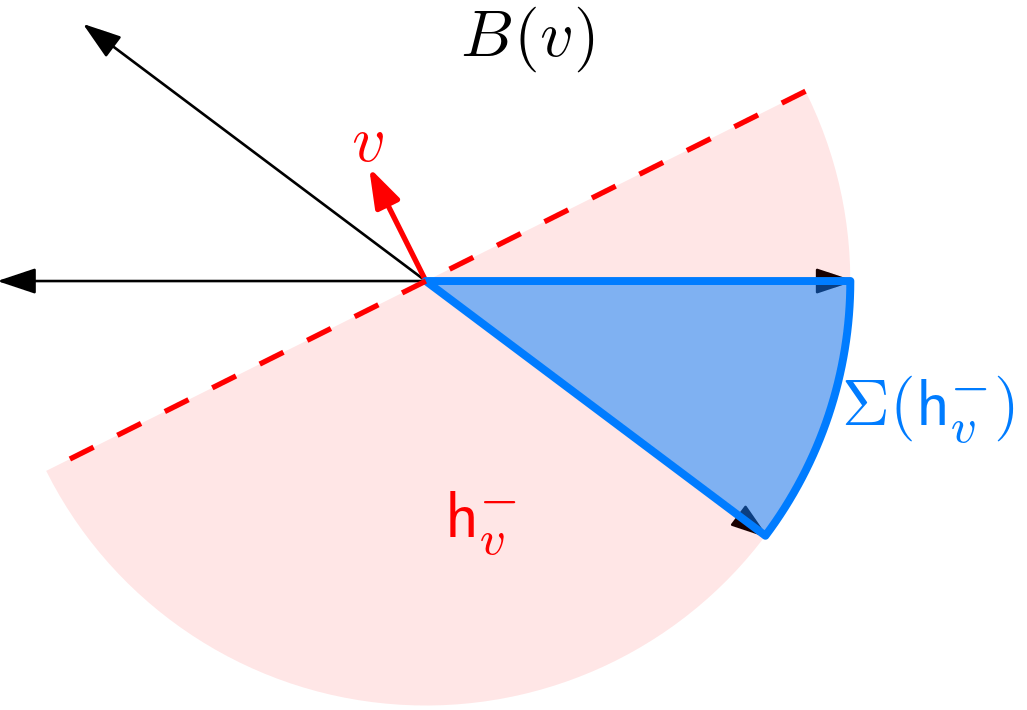}
\caption{A rank 2 arrangement that is \underline{not sharp}.
         For this particular choice of $v$, the collection $\cham(\sfh_v^-)$ (consisting of a single region)
         is not an upper set with respect to $\preceq_{B(v)}$.
         For the region $C$ in $\cham(\sfh_v^-)$ we have $\des(C) = 1$, but $P_{\arr}(z) = z^2$.
         That is, equation \cref{eq:sharp-distr} does not hold for this choice of $v$.}
\label{f:nonsharp}
\end{figure}

\begin{theorem}[Theorem A]\label{t:sharp-distr}
Let $\arr$ be a sharp arrangement.
Then, for any very generic vector $v \in \RR^d$,
\begin{equation}\label{eq:sharp-distr}
P_\arr(z) = \sum_{C \subseteq \sfh_v^-} z^{\des_{\preceq_{B(v)}}(C)}.
\end{equation}
\end{theorem}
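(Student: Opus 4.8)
The plan is to deduce Theorem~\ref{t:sharp-distr} from \cref{p:gen-upset-PEul} by verifying its hypothesis: namely, that for a very generic $v$, the set $\cham(\sfh_v^-)$ is an upper set of the weak order $\preceq_{B(v)}$. Since $v$ is very generic, $\sfh_v^-$ is a generic halfspace with respect to $\arr$, so once we know $\cham(\sfh_v^-)$ is an upper set, \cref{p:gen-upset-PEul} immediately gives \cref{eq:sharp-distr} with base region $B(v)$. Note that $P_\arr(z)$ is unchanged under essentialization and sharpness is preserved, so we may assume $\arr$ is essential; also $\cham(\sfh_v^-)$ is nonempty because any generic halfspace contains at least $|\mu(\bot,\top)| \geq 1$ regions.

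The heart of the argument is the upper-set claim, and this is where sharpness enters. Suppose $C \in \cham(\sfh_v^-)$ and $D$ covers $C$ in $\preceq_{B(v)}$; I want to show $D \subseteq \sfh_v^-$, i.e.\ $\langle v, x\rangle \le 0$ for all $x \in D$. The region $D$ is obtained from $C$ by crossing the common facet $F = C \cap D$; concretely, if $r_1,\dots,r_d$ span the rays of $C$ and $F = F_k$ is the facet omitting $r_k$ with inward normal $n_k$, then $D$ is spanned by $r_1,\dots,r_{k-1},-n_k,r_{k+1},\dots,r_d$ (up to positive scaling of the last generator, since crossing $F_k$ flips exactly the sign of the coordinate dual to $r_k$). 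Since $C \preceq_{B(v)} D$ and they differ by the hyperplane through $F_k$, that hyperplane lies in $\mathrm{sep}(B(v),D)\setminus\mathrm{sep}(B(v),C)$; because $v \in B(v)$ and $v \notin \sfh_v^- $'s boundary is irrelevant, what matters is that $C$ is on the same side of the hyperplane through $F_k$ as $B(v)$. I would translate ``$C \subseteq \sfh_v^-$'' into inequalities on the generators: writing $x \in C$ as $x = \sum_j a_j r_j$ with $a_j \ge 0$, the condition $\langle v, x\rangle \le 0$ for all such $x$ is equivalent to $\langle v, r_j\rangle \le 0$ for all $j$. For $D$, a point is $x = \sum_{j \ne k} b_j r_j + b_k(-n_k)$ with $b_j \ge 0$, so I need $\langle v, r_j \rangle \le 0$ for $j \ne k$ (already known) and $\langle v, -n_k\rangle \le 0$, i.e.\ $\langle v, n_k \rangle \ge 0$.

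The key step is therefore to show $\langle v, n_k\rangle \ge 0$, and this is precisely where I expect the main obstacle to lie. Here I would use \cref{eq:normal-sharp}: sharpness gives $r_k \in \RR_+\{r_1,\dots,r_{k-1},n_k,r_{k+1},\dots,r_d\}$, so $r_k = \lambda n_k + \sum_{j\ne k} \mu_j r_j$ with $\lambda > 0$ and $\mu_j \ge 0$. Pairing with $v$: $\langle v, r_k\rangle = \lambda \langle v, n_k\rangle + \sum_{j\ne k}\mu_j\langle v, r_j\rangle$. The problem is that this expresses $\langle v,r_k\rangle$ (known $\le 0$) as $\lambda\langle v,n_k\rangle$ plus a nonpositive quantity, which gives an \emph{upper} bound $\langle v, n_k\rangle \le \lambda^{-1}(\langle v,r_k\rangle - \sum \mu_j \langle v,r_j\rangle)$, not the lower bound I want. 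So I would instead exploit the cover relation more carefully: the fact that $D$ covers $C$ on the side \emph{away} from $B(v)$ means the facet hyperplane $\rmH_k$ (the span of $r_1,\dots,\widehat{r_k},\dots,r_d$) separates $C$ from $\opp{B(v)}$ but not from $B(v)$; since $n_k$ is the inward normal to $F_k$ pointing into $C$, and $v \in B(v)$ lies on the same side of $\rmH_k$ as $C$, we get $\langle v, n_k\rangle \ge 0$ directly from $v$ being on the $n_k$-positive side of $\rmH_k$ --- this uses that $v$ is generic (not on $\rmH_k$) to make the inequality strict, and uses the definition of the weak order to identify which side $v$ is on. I would make this precise by decomposing $v = \sum_j c_j r_j$ (possible since the $r_j$ form a basis in the essential case) and observing $c_k = \langle v, n_k\rangle$, then arguing that $C \preceq_{B(v)} D$ with $D$ on the far side forces $c_k \ge 0$ because $B(v)$ and $C$ agree in sign on $\rmH_k$ while $D$ does not. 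Assembling: $\langle v, r_j\rangle \le 0$ for $j\ne k$ together with $\langle v, n_k\rangle \ge 0$ yields $D \subseteq \sfh_v^-$, completing the proof that $\cham(\sfh_v^-)$ is an upper set, and \cref{p:gen-upset-PEul} finishes the theorem.
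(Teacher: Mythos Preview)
There is a genuine gap at exactly the place where sharpness must enter. Your assertion that $D$ is spanned by $r_1,\dots,r_{k-1},-n_k,r_{k+1},\dots,r_d$ is false: the new ray of $D$ across the facet $F_k$ is a ray of the arrangement and need not be a positive multiple of $-n_k$. For instance, in $\cB_2$ take $C = \{0 \le x_1 \le x_2\}$ with rays $r_1 = (0,1)$, $r_2 = (1,1)$ and cross the facet $F_1 = \RR_{\ge 0}\, r_2$; the adjacent region has new ray $(1,0)$, while $-n_1 = (1,-1)$. Your derivation of $\langle v, n_k\rangle > 0$ from the cover relation is correct, but it uses only the definition of the weak order; as \cref{f:nonsharp} illustrates, the upper-set claim can fail without sharpness, so an argument that never invokes it cannot succeed. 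Knowing $\langle v,-n_k\rangle \le 0$ says nothing about $D$ unless you first know $D$ lies in the cone generated by the $r_j$ ($j\neq k$) and $-n_k$, and that containment is not automatic.

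The fix---and this is what the paper does---is to apply \cref{eq:normal-sharp} to $D$ rather than to $C$. Label the rays of $D$ so that $r_1,\dots,r_{d-1}$ span the common facet and $r_d$ is the new ray; then sharpness of $D$ gives $r_d \in \RR_+\{r_1,\dots,r_{d-1},n_d\}$, where $n_d$ is the inward normal to the common facet \emph{as a facet of $D$}, hence a positive multiple of your $-n_k$. Now $\langle v,r_i\rangle < 0$ for $i\le d-1$ (these are rays of $C \subseteq \sfh_v^-$, and genericity makes the inequality strict) and $\langle v,n_d\rangle < 0$ by the cover-relation argument you already gave, so $\langle v,r_d\rangle < 0$ and $D\subseteq \sfh_v^-$. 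In short: sharpness is used to control the \emph{new} ray of $D$, not the abandoned ray of $C$; your first attempt applied \cref{eq:normal-sharp} to the wrong region and got the wrong inequality, and your second attempt dropped sharpness entirely while leaning on a false description of $D$.
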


\begin{proof}
By \cref{p:gen-upset-PEul}, it is enough to show that $\cham(\sfh_v^-)$ is an upper set of the weak order with base region $B(v)$.
Take regions $C,D$ with $C \in \cham(\sfh_v^-)$ and $D$ covering $C$ in the order $\preceq_{B(v)}$,
we claim that $D \in \cham(\sfh_v^-)$.

By intersecting with the space orthogonal to $\bot$ if necessary, we assume without loss of generality that $\arr$ is essential.
Let $\rmH \in \arr$ be the hyperplane separating $C$ and $D$, and $n_\rmH$ be a normal vector of $\rmH$ such that $C \subseteq H^+ := \{ x \in \RR^d \,:\, \langle n_\rmH , x \rangle \geq 0 \}$.
Since $C \preceq_{B(v)} D$, we have that $B(v)$ is also contained in $H^+$. 
In particular, $\langle n_\rmH , v \rangle > 0$.
Let $r_1,\dots,r_{d-1},r_d$ be the vectors spanning the rays of $D$ numbered so that $r_1,\dots,r_{d-1}$ are the rays of the common facet between $C$ and $D$, namely $C \cap D$.
In particular, $r_1,\dots,r_{d-1}$ are rays of $C$ and, since $C \subseteq \sfh_v^-$ by hypothesis, $\langle r_i , v \rangle < 0$ for all $i \in [d-1]$.
Observe that $n_\rmH = - \lambda n_d$ for some $\lambda > 0$.
It then follows from \cref{eq:normal-sharp} with $k=d$ that $\langle r_d , v \rangle < 0$ and $D \subseteq \sfh^-_v$, as we wanted to show.
\end{proof}

\section{The Primitive Eulerian polynomial of finite Coxeter arrangements}\label{s:PEulCox}
	
Let $(W,S)$ be a finite Coxeter system and $\arr_W$ be the associated reflection arrangement.
For the combinatorics of Coxeter groups, and realizations of the groups of type A, B, and D as permutation groups, we refer the reader to the book of Bj\"{o}rner and Brenti \citep{bb05Coxeter}.

The \defn{Eulerian polynomial} of $\arr_W$ is 
\[
E_W(z) = \sum_{w \in W} z^{\des(w)},
\]
where $\des$ denotes the \defn{descent} statistic on $W$.
That is $\des(w) := \#\{ s \in S \,:\, l(ws) < l(w) \}$ and $l(w)$ denotes the length of $w$ with respect to $S$.

Let us now choose a base region $B \in \cham[\arr_W]$.
The group $W$ acts simply transitively on $\cham[\arr_W]$ and $\des(w) = \des_{\preceq_B}(wB)$ for every $w \in W$.
A real reflection arrangement is always {sharp}, so \cref{t:sharp-distr} yields the following.

\begin{corollary}
For any very generic vector $v$ with respect to $\arr_W$,
\begin{equation}\label{eq:PEul-generic-Cox}
P_W(z) = \sum_{ \substack{w \in W \,:\, wB \subseteq \sfh_v^-} } z^{\des(w)},
\end{equation}
where $B = B(v) \in \cham[\arr_W]$ is the region containing $v$.
\end{corollary}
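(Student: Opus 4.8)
The plan is to obtain this as an immediate specialization of \cref{t:sharp-distr} to the reflection arrangement $\arr_W$, followed by a change of indexing set from $\cham[\arr_W]$ to $W$.

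First I would record that $\arr_W$ is sharp, which was already observed just above the statement, so \cref{t:sharp-distr} applies verbatim: for any very generic $v$ with respect to $\arr_W$,
\[
P_W(z) = \sum_{C \subseteq \sfh_v^-} z^{\des_{\preceq_{B(v)}}(C)},
\]
the sum being over the regions $C \in \cham[\arr_W]$ contained in $\sfh_v^-$, with $B(v)$ the region containing $v$.

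Next I would transport this sum along the simply transitive action of $W$ on $\cham[\arr_W]$. With $B = B(v)$, the map $w \mapsto wB$ is a bijection $W \to \cham[\arr_W]$, so the index set $\{C \in \cham[\arr_W] \,:\, C \subseteq \sfh_v^-\}$ is in bijection with $\{w \in W \,:\, wB \subseteq \sfh_v^-\}$. It then remains only to match exponents: under the standard identification of $\preceq_B$ with the right weak order of $W$ via $w \mapsto wB$ — equivalently, the equality $\des_{\preceq_B}(wB) = \des(w)$ recorded just before the statement — the region $wB$ contributes $z^{\des(w)}$. Substituting this reindexing into the displayed identity yields exactly \cref{eq:PEul-generic-Cox}.

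I do not expect a genuine obstacle here: all the content is in \cref{t:sharp-distr}, and what remains is bookkeeping. The only points requiring care are the conventions already fixed in the surrounding text — that the \emph{same} base region $B(v)$ is used on both sides of the identity, and that $\des_{\preceq_B}(wB)$ (the number of regions covered by $wB$ in $\preceq_B$) coincides with the Coxeter descent number $\des(w)$ (the number of $s \in S$ with $l(ws) < l(w)$), which is the translation of the cover relations $w \lessdot ws$ in the right weak order of $W$ into cover relations of $\preceq_B$.
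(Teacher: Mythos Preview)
Your proposal is correct and matches the paper's own approach exactly: the paper simply notes that reflection arrangements are sharp and invokes \cref{t:sharp-distr}, with the reindexing from regions to group elements via $w \mapsto wB$ and the identification $\des_{\preceq_B}(wB) = \des(w)$ already recorded in the surrounding text. There is nothing to add.
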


Since any reflection arrangement is the Cartesian product of irreducible reflection arrangements, we only concentrate in studying the Primitive Eulerian polynomial for irreducible arrangements.
In \cref{ss:PEul-A,ss:PEul-B,ss:PEul-D}, we make an explicit choice of generic $v$ for the arrangements of type A, B, and D, respectively, and use formula \cref{eq:PEul-generic-Cox} to give a combinatorial interpretation for the Primitive Eulerian polynomial of the corresponding type.

The relation between the Eulerian polynomial and the Primitive Eulerian polynomial becomes even more apparent when we compare their generating functions in types A, B, and D.
The generating function for the classical Eulerian polynomials was established by Euler himself \citep{Euler1755}:
\begin{equation}\label{eq:generating-Eul-A}
A(z,x) : = \sum_{n \geq 0} E_{\arr_n}(z) \dfrac{x^n}{n!} = \dfrac{z-1}{z-e^{x(z-1)}}.
\end{equation}
We refer the reader to Foata's survey \citep[Section 3]{foata10eulerian} for a derivation of this formula.
The generating function for the Eulerian polynomials of type B and D are due to Brenti \citep[Theorem 3.4 and Corollary 4.9]{brenti94cox-eul}.
They can be expressed in terms of $A(z,x)$ as follows:
\[
\sum_{n \geq 0} E_{\barr_n}(z) \dfrac{x^n}{n!} = e^{x(z-1)} A(z,2x)
\qquad\qquad
\sum_{n \geq 0} E_{\darr_n}(z) \dfrac{x^n}{n!} = \big( e^{x(z-1)} - z x \big) A(z,2x).
\]
Compare with the generating functions for the Primitive Eulerian polynomials below.

\begin{theorem}\label{t:PEul-generating}
The generating function for the Primitive Eulerian polynomials of type A, B, and D, with $P_{\cD_1}(z) := 0$, are:
\begin{center}
{\renewcommand{\arraystretch}{1.5}
\begin{tabular}{c|c|c}
Type A & Type B & Type D \\
\hline
$1 + \log A(z,x)$ &
$e^{x(z-1)} A(z,2x)^{1/2}$ &
$\big( e^{x(z-1)} - z x \big) A(z,2x)^{1/2}$
\end{tabular}}
\end{center}
\end{theorem}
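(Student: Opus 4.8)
The plan is to derive each of the three generating functions from the quadratic recursive formulas for $P_{\cA_{n-1}}(z)$, $P_{\cB_n}(z)$, and $P_{\cD_n}(z)$ promised in \cref{ss:PEul-A,ss:PEul-B,ss:PEul-D} (which are themselves specializations of \cref{p:PEul-recursion}), and then to translate those recursions into ODEs for the exponential generating functions. First I would recall the shape of the recursion in type A: applying \cref{p:PEul-recursion} to the braid arrangement $\cA_{n-1}$ (removing one hyperplane $x_i = x_j$) expresses $P_{\cA_{n-1}}$ in terms of a restriction, which is again a braid arrangement of lower rank, and localizations $\cA_{\rmL}$, which are products of a smaller braid arrangement with a rank-$1$ arrangement; using \cref{eq:prod-arr} and \cref{ex:PEul-rk1} this becomes a convolution identity in $n$. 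Summing against $x^n/n!$ turns the convolution into a product of power series, and the linear term $(z-1)P_{\arr^\rmH}$ contributes a derivative term. The resulting first-order ODE for $\mathbf{P}^A(z,x) := \sum_{n\ge 0} P_{\cA_{n-1}}(z)\,x^n/n!$ should be separable and integrate to $1 + \log A(z,x)$ once one checks the initial condition $P_{\cA_0}(z) = P_{\emptyset}(z) = 1$ (the empty arrangement) — equivalently, that the $n=1$ term is $1$ and the $n=2$ term matches \cref{ex:PEul-rk2} with $k=1$, giving $z$. One convenient check: differentiating $1+\log A$ gives $\partial_x(1+\log A) = A'/A = z\,A$ using $\partial_x A = z(A-1)$... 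I would verify the exact differential relation $\partial_x A(z,x) = A(z,x)\big(z\,A(z,x) - z + 1 - \text{(correction)}\big)$ directly from \cref{eq:generating-Eul-A} and match it to the combinatorial recursion.

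Next, for types B and D I would run the same machine. In type B, removing a hyperplane from $\cB_n$ and applying \cref{p:PEul-recursion}: the restriction to a generic hyperplane of the form $x_i = x_j$ is a $\cB_{n-1}$-type arrangement, while restriction to $x_i = 0$ is again $\cB_{n-1}$-type, and the localizations at rank-$1$ flats are products of smaller type-B arrangements with rank-$1$ factors. The key structural point, which produces the factor $A(z,2x)^{1/2}$ rather than $A(z,2x)$, is that the relevant substructures come in pairs (the two choices of sign $x_i = \pm x_j$), so the convolution kernel carries a factor $2$ on the $x$-grading — this is exactly what the dilation $x \mapsto 2x$ and the square root encode. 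Concretely I expect to obtain an ODE for $\mathbf{P}^B(z,x)$ whose solution, after imposing $P_{\cB_0}(z)=1$ and $P_{\cB_1}(z)=z$ (the rank-$1$ coordinate arrangement, \cref{ex:PEul-rk1}), is $e^{x(z-1)}A(z,2x)^{1/2}$; the prefactor $e^{x(z-1)}$ accounts for the $n$ coordinate hyperplanes $x_i=0$, each a rank-$1$ contribution, paralleling how Brenti's type-B Eulerian generating function carries the same $e^{x(z-1)}$ factor against $A(z,2x)$.

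For type D the cleanest route is not a fresh recursion but the relation between $\cD_n$ and $\cB_n$: the type-D arrangement is obtained from the type-B one by deleting the $n$ coordinate hyperplanes $x_i = 0$. Using \cref{p:PEul-recursion} to peel off these $n$ hyperplanes one at a time, or more efficiently using the interpolation identity that expresses $P_{\cB_n}$ in terms of $P_{\cD_k}$ and lower type-B arrangements, I would get a linear relation between $\mathbf{P}^B(z,x)$ and $\mathbf{P}^D(z,x)$ of the form $\mathbf{P}^B(z,x) = e^{zx}\,\mathbf{P}^D(z,x)$ or similar — the correction term $-zx$ multiplying $A(z,2x)^{1/2}$ in the statement is precisely the adjustment needed so that $P_{\cD_1}(z) = 0$ (as stipulated) while $P_{\cD_2}(z)$ equals the type-$A_1\times A_1$ value, namely $z^2$, since $\cD_2 = \cA_1 \times \cA_1$. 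I would pin down the exact form of the $\mathbf{P}^B \leftrightarrow \mathbf{P}^D$ relation by a short generating-function computation and then read off $\mathbf{P}^D(z,x) = \big(e^{x(z-1)} - zx\big)A(z,2x)^{1/2}$, checking the first few coefficients against direct computation of $P_{\cD_2}(z)$ and $P_{\cD_3}(z)$.

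The main obstacle I anticipate is bookkeeping the convolution kernels correctly — in particular getting the combinatorial factors right in the type-B recursion so that the dilation $x\mapsto 2x$ and the exponent $1/2$ emerge cleanly, and identifying the precise linear change of variables linking the type-B and type-D series (including the $-zx$ term and verifying it is consistent with the convention $P_{\cD_1}(z):=0$). Once the ODEs are written down correctly, solving them is routine: each is first-order linear or separable, and the solutions are forced by the initial data coming from \cref{ex:PEul-rk1,ex:PEul-rk2} together with the Cartesian-product rule \cref{eq:prod-arr}.
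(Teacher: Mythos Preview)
Your plan for types A and B is reasonable: the paper simply cites prior work (\citep{bastidas20polytope}) for those cases, so your ODE-from-recursion derivation would be a valid self-contained alternative, differing from the paper mainly in that you solve a differential equation where the cited reference uses a compositional-formula argument.

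For type D, however, there is a genuine gap. Your proposal to ``peel off the $n$ coordinate hyperplanes one at a time using \cref{p:PEul-recursion}'' misreads that proposition: it relates $P_\arr$ to the \emph{restriction} $P_{\arr^\rmH}$ (an arrangement inside $\rmH$, of rank one less), not to the \emph{deletion} $P_{\arr\setminus\{\rmH\}}$. There is no way to pass from $\cB_n$ to $\cD_n$ by iterating \cref{p:PEul-recursion}. Your alternative, the ``interpolation identity'' relating $P_{\cB_n}$ and $P_{\cD_n}$, is exactly \cref{t:PEul-D-to-B}(i), but the paper proves that identity \emph{using} the generating functions of \cref{t:PEul-generating}, so invoking it here would be circular unless you supply an independent proof --- which amounts to a direct M\"obius-function comparison between $\flat[\cB_n]$ and $\flat[\cD_n]$, essentially the work the paper does anyway. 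Finally, your guessed relation $\mathbf{P}^B(z,x)=e^{zx}\mathbf{P}^D(z,x)$ is not the correct one; the actual link is $\mathbf{P}^B(z,x)=\mathbf{P}^D(z,x)+zx\,A(z,2x)^{1/2}$, and establishing it independently is again the whole point.

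The paper's own type D argument is quite different from anything you sketch: it computes $\Psi_{\cD_n}(z)$ directly from the explicit M\"obius values on type D partitions (splitting according to whether the zero block is empty), then packages the sum via a type B compositional formula (\cref{p:typeB-comp}) to obtain the generating function in closed form. If you want to stay within your ODE framework, the honest route is to use the type D quadratic recursion \cref{t:PEulD-quad-rec} --- which \emph{is} a consequence of \cref{p:PEul-recursion} --- and convert it into a differential relation involving both $\mathbf{P}^B$ and $\mathbf{P}^D$; that would work, but it is precisely the ``fresh recursion'' you said you wanted to avoid.
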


The formulas in type A and B appear in the proofs of Lemma 5.3 and Lemma 6.2 in \citep{bastidas20polytope}, respectively.
We complete the type D case in \cref{ss:PEul-D}.
Setting $P_{\cD_1}(z) = 0$ is not an accident of the proof, it also simplifies the recursive formulas for the Primitive Eulerian polynomials for type D and related arrangements in \cref{t:PEulD-quad-rec,t:PEul-D-to-B}.

\begin{remark}
The reader might recognize the factor $A(z,2x)^{1/2}$ above:
it is the generating function for the $1/2$-Eulerian polynomials introduced by Savage and Viswanathan \citep{sv12kEulerian}; see \cref{ss:PEul-B} for more details.
\end{remark}

\cref{tab:PrimEulExceptional} shows the Primitive Eulerian polynomial for the exceptional reflection arrangements.
They were computed with the help of SageMath \citep{sagemath}.
The corresponding table for the Eulerian polynomials can be found in Petersen's book \citep[Table 11.5]{petersen15}.

\begin{table}[ht]
{
\renewcommand{\arraystretch}{1.1}
\begin{tabular}{c||l}
$W$ & $P_W(z)$  \\ 
\hline\hline
$H_3$ & $z^3 + 28 z^2 + 16 z$ \\ 
$H_4$ & $z^4 + 1316 z^3 + 3844 z^2 + 900 z$ \\
$F_4$ & $z^4 + 116 z^3 + 220 z^2 + 48 z$ \\ 
$E_6$ & $z^6 + 633 z^5 + 4098 z^4 + 5698 z^3 + 1773 z^2 + 117 z$\\
$E_7$ & $z^7 + 8814 z^6 + 118560 z^5 + 332200 z^4 + 252960 z^3 + 51234 z^2 + 1996 z$ \\
\multirow{2}*{$E_8$} & $ z^8 + 440872z^7 + 11946408z^6 + 60853504z^5$ \\
							& \hspace*{4cm} $+92427088z^4 + 43792992z^3 + 6056496z^2 + 139080z $
\end{tabular}
}
\caption{The Primitive Eulerian polynomial of the reflection arrangements of exceptional type.}
\label{tab:PrimEulExceptional}
\end{table}

\section{The type A Primitive Eulerian polynomial}\label{ss:PEul-A}

The \defn{braid arrangement} $\arr_n$ in $\RR^n$ consists of the hyperplanes with equations $x_i = x_j$ for all $1 \leq i < j \leq n$.
This arrangement is not essential.
Below, we show the arrangements
$\arr_3$ (intersected with the hyperplane perpendicular to $\bot$) and
$\arr_4$ (intersected with the unit sphere inside the hyperplane perpendicular to $\bot$).
\[
\begin{gathered}
\begin{tikzpicture}[<->]
\node [] at (-2.2,1.3) {$\arr_3$};
\draw (-90:1.5) -- (90:1.5) node [above] {\small$x_1 = x_3$};
\draw (150:1.5) -- (-30:1.5) node [below right] {\small$x_1 = x_2$};
\draw (210:1.5) -- (30:1.5) node [above right] {\small$x_2 = x_3$};
\node [right] at (0:.2) {\footnotesize$x_1 \leq x_2 \leq x_3$};
\end{tikzpicture}
\end{gathered}
\hspace*{.2\linewidth}
\begin{gathered}
\begin{tikzpicture}
\node [] at (0,1.3) {$\arr_4$};
\node [] at (0,-1.3) {$\phantom{\arr_4}$};
\end{tikzpicture}
\includegraphics[scale=.12]{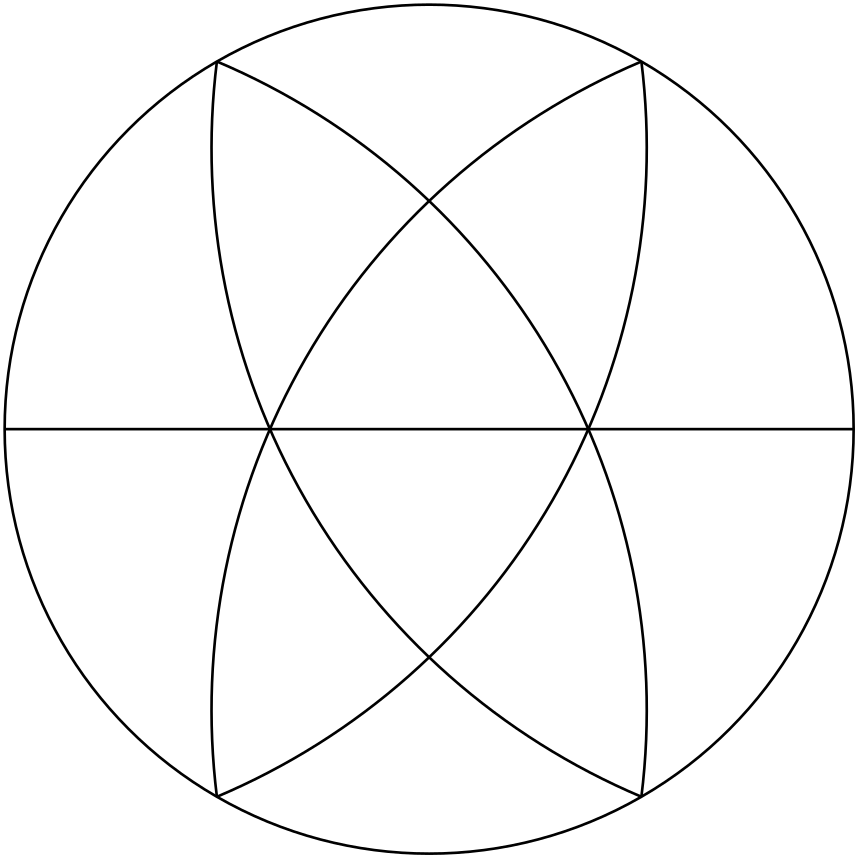}
\end{gathered}
\]
It is the reflection arrangement corresponding to the symmetric group $\frakS_n$, the Coxeter group of type ${\rm A}_{n-1}$.
$\frakS_n$ is the group of permutations $w : [n] \rightarrow [n]$ under composition, where $[n] := \{1,2,\dots, n\}$.
As it is usual, we might write a permutation $w \in \frakS_n$ in its one-line notation $w_1 w_2 \dots w_n$, where $w_i = w(i)$, or as a product of disjoint cycles.
For example, both $2\,4\,1\,3\,5$ and $(1\,2\,4\,3)(5)$ denote the same element of $\frakS_5$.
The \defn{descent} and \defn{excedance} statistic of a permutation $w \in \frakS_n$ are:
\begin{align*}
\des(w) & = \# \{i \in [n-1] \,:\, w(i) > w(i+1) \}; \\
\exc(w) & = \# \{i \in [n-1] \,:\, w(i) > i \}. 
\end{align*}

In \citep[Corollary 5.5]{bastidas20polytope}, the author obtained an interpretation of the coefficients of $P_{\arr_n}(z)$ in terms of the excedance statistic on the \emph{cuspidal} elements of the symmetric group~$\frakS_n$:
\begin{equation}\label{eq:PE-A-exc}
P_{\arr_n}(z) = \sum_{w \in {\rm cusp}(\frakS_n)} z^{\exc(w)}.
\end{equation}
The cuspidal elements of the symmetric group $\frakS_n$ are precisely the \emph{long cycles}; that is, the permutations whose cycle decomposition consists of exactly one cycle of order $n$.

\begin{example}\label{ex:exc-A4}
The distribution of the excedance statistic on the long cycles of $\frakS_4$ is shown below.
\begin{center}
{\renewcommand{\arraystretch}{1.1}
\begin{tabular}{c|c}
$w$ & $\exc(w)$ \\
\hline
$(\red{1} \, \red{2} \, \red{3} \, 4)$ & $3$ \\
$(\red{1} \, 3 \, \red{2} \, 4)$ & $2$ \\
$(2 \, \red{1} \, \red{3} \, 4)$ & $2$
\end{tabular}
\qquad\qquad
\begin{tabular}{c|c}
$w$ & $\exc(w)$ \\
\hline
$(\red{2} \, 3 \, \red{1} \, 4)$ & $2$ \\
$(3 \, \red{1} \, \red{2} \, 4)$ & $2$ \\
$(3 \, 2 \, \red{1} \, 4)$ & $1$
\end{tabular}}
\end{center}
Excedances are marked in red.
Thus, $P_{\arr_4}(z) = z^3 + 4 z^2 + z$.
\end{example}

Let us now interpret the coefficients of $P_{\arr_n}(z)$ using \cref{t:sharp-distr}.
For this, we recall the usual identification between the elements of $\frakS_n$ and the regions of the braid arrangement~$\arr_n$:
\[
w = w_1 w_2 \dots w_n \in \frakS_n
\quad
\longleftrightarrow
\quad
C_w := \{ x \in \RR^n \,:\, x_{w_1} \leq x_{w_2} \leq \dots \leq x_{w_n} \} \in \cham[\arr_n].
\]
Under this identification, the weak order of $\cham[\arr_n]$ with respect to the region $C_e$, where $e \in \frakS_n$ denotes the identity of the symmetric group, coincides with the usual weak order of $\frakS_n$ as a Coxeter group. In particular, $\des(w) = \des_\preceq(C_w)$.

Let $\sfh =\sfh^-_v$ be the halfspace determined by the vector $v = (-1,\dots,-1,n-1) \in \RR^n$.
Bj\"{o}rner and Wachs \citep{bw04geombases} characterized the permutations $w \in \frakS_n$ such that $C_w \subseteq \sfh$,
we let $BW^A_n$ denote the collection of such permutations.
They show that
\[
BW^A_n = \{ w \in \frakS_n \,:\, w_1 = n \}.
\]
The halfspace $\sfh^-_v$ is generic, however $v$ itself is not generic since it lies in the boundary of the region $C_e$.
Nonetheless, we can perturb $v$ so that it lies in the interior of $C_e$ without changing which regions are contained in $\sfh^-_v$.
The following is then a direct consequence of \cref{t:sharp-distr}.

\begin{theorem}\label{t:PE-A-des}
For every $n \geq 1$,
\begin{equation}\label{eq:PE-A-des}
P_{\arr_n}(z) = \sum_{w \in \frakS_n \,:\, w_1 = n} z^{\des(w)}.
\end{equation}
\end{theorem}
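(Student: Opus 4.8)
The plan is to specialize \cref{t:sharp-distr} (equivalently \cref{eq:PEul-generic-Cox}) to the braid arrangement $\arr_n$, which is a reflection arrangement and hence sharp, using the vector $v = (-1,\dots,-1,n-1) \in \RR^n$. Recall from the discussion above that the weak order on $\cham[\arr_n]$ with base region $C_e$ is the weak order of $\frakS_n$, so that $\des_\preceq(C_w) = \des(w)$. Thus \cref{eq:PEul-generic-Cox} would give
\[
	P_{\arr_n}(z) = \sum_{w \in \frakS_n \,:\, C_w \subseteq \sfh_v^-} z^{\des(w)},
\]
and it would remain only to identify the index set $\{w : C_w \subseteq \sfh_v^-\}$ with $\{w : w_1 = n\}$; this is exactly the Bj\"{o}rner--Wachs description of $BW^A_n$, which I would cite (and which can also be verified by hand, see the last paragraph).

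Two points must be settled before \cref{t:sharp-distr} applies, since $v$ itself lies on the hyperplane $\{x_1 = x_2\}\in\arr_n$ (in fact $v$ spans a ray of $C_e$) and so is not generic. First, I would check that $\sfh_v^-$ \emph{is} generic with respect to $\arr_n$: the minimal flat $\bot = \RR(1,\dots,1)$ is orthogonal to $v$, and restricting the linear form $x\mapsto \langle v,x\rangle$ to a flat $\rmX$ of $\arr_n$ (which corresponds to a set partition of $[n]$) one finds that it vanishes identically only when $\rmX = \bot$. Second, and more substantively, I would replace $v$ by a nearby very generic vector $v'$ in the interior of $C_e$ without changing $\cham(\sfh_v^-)$. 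Passing to the essentialization in $\bot^\perp$ (which contains $v$), the crucial observation is that $v^\perp$ contains no ray of $\arr_n$: the rays point in the directions $\pm\big(\mathbbm{1}_S - \tfrac{|S|}{n}\mathbbm{1}\big)$ for $\emptyset \neq S \subsetneq [n]$, where $\mathbbm{1}_S$ is the indicator vector of $S$, and (using $\langle v,\mathbbm{1}\rangle = 0$)
\[
	\big\langle v,\ \mathbbm{1}_S - \tfrac{|S|}{n}\mathbbm{1}\big\rangle = \sum_{i \in S} v_i,
\]
which equals $-|S|<0$ if $n\notin S$ and $(n-1)-|S\cap[n-1]|>0$ if $n\in S$, since $S \neq [n]$. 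Hence, for every region $C$, the inequalities that decide whether $C \subseteq \sfh_v^-$ are all strict, so they survive a small perturbation of $v$; for $v'$ close enough to $v$ we have $\cham(\sfh_{v'}^-) = \cham(\sfh_v^-)$, and $v'$ may be taken in the interior of $C_e$ (which avoids every hyperplane of $\arr_n$) with $\sfh_{v'}^-$ still generic, since genericity of the halfspace is an open condition within $\bot^\perp$.

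Applying \cref{t:sharp-distr} to $v'$ then gives the displayed formula with the sum over $\{w : C_w \subseteq \sfh_v^-\}$ and base region $B(v') = C_e$, and I would make this set explicit: $C_w \subseteq \sfh_v^-$ iff every ray of $C_w$ pairs non-positively with $v$; the $k$-th ray of $C_w$ is spanned by $-\big(\mathbbm{1}_{\{w_1,\dots,w_k\}} - \tfrac{k}{n}\mathbbm{1}\big)$, so the conditions become $|\{w_1,\dots,w_k\}\cap[n-1]| \le (n-1)\,[\,n\in\{w_1,\dots,w_k\}\,]$ for $1\le k\le n-1$. These hold for all $k$ precisely when $w_1 = n$ (if $w_1 = n$ the left side is $k-1\le n-2$; if $w_1\neq n$ the case $k=1$ already fails). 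This recovers $BW^A_n = \{w : w_1 = n\}$ and yields \cref{eq:PE-A-des}; the cases $n\le 2$ are immediate (see \cref{ex:PEul-rk1}).

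I expect the perturbation step to be the main obstacle: one must be sure that sliding $v$ into the interior of the base region does not change which regions lie inside $\sfh_v^-$. Everything reduces to the fact that $v^\perp$ avoids all rays of $\arr_n$, which turns the containment conditions into strict inequalities; granting that, the rest is bookkeeping with the explicit rays of the braid arrangement together with the Bj\"{o}rner--Wachs characterization.
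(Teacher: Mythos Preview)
Your proposal is correct and follows essentially the same route as the paper: choose $v=(-1,\dots,-1,n-1)$, invoke the Bj\"orner--Wachs description of $BW^A_n$, perturb $v$ into the interior of $C_e$ without changing $\cham(\sfh_v^-)$, and apply \cref{t:sharp-distr}. You supply more detail than the paper does---in particular the verification that $v^\perp$ misses every ray of $\arr_n$, which justifies the perturbation step the paper only asserts---and you independently re-derive the characterization $\{w:C_w\subseteq\sfh_v^-\}=\{w:w_1=n\}$ rather than relying solely on the citation.
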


\begin{example}
The distribution of the descent statistic on $BW^A_4$ is shown below.
\begin{center}
{\renewcommand{\arraystretch}{1.1}
\begin{tabular}{c|c}
$w$ & $\des(w)$ \\
\hline
$\red{4} \, \red{3} \, \red{2} \, 1$ & $3$ \\
$\red{4} \, 2 \, \red{3} \, 1$ & $2$ \\
$\red{4} \, \red{3} \, 1 \, 2$ & $2$
\end{tabular}
\qquad\qquad
\begin{tabular}{c|c}
$w$ & $\des(w)$ \\
\hline
$\red{4} \, 1 \, \red{3} \, 2$ & $2$ \\
$\red{4} \, \red{2} \, 1 \, 3$ & $2$ \\
$\red{4} \, 1 \, 2 \, 3$ & $1$
\end{tabular}}
\end{center}
The positions where a descent occurs are are in red.
Observe that this distribution agrees with \cref{ex:exc-A4}.
\end{example}

\begin{figure}[ht]
\includegraphics[height=.2\textheight]{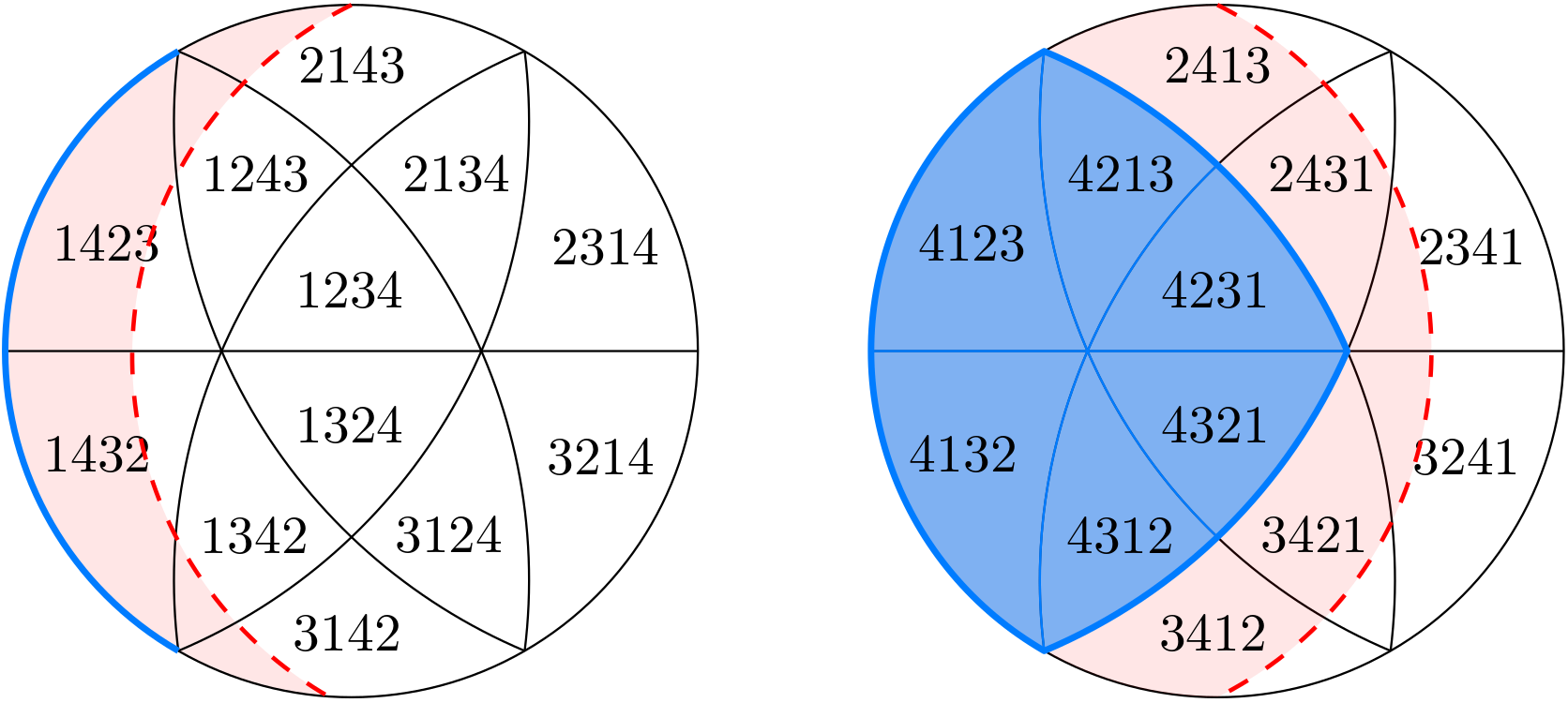}
\caption{Regions corresponding to elements in $BW^A_4$.}
\label{f:BWA4}
\end{figure}

The equivalence between formulas \cref{eq:PE-A-exc,eq:PE-A-des} can be proved combinatorially via Foata's \emph{first fundamental transformation} \citep{foata65}.

There is a natural bijection $BW^A_n \to \frakS_{n-1}$ sending $w = n \, w_2 \dots w_n$ to $w_2 \dots w_n$, in one-line notation.
If $n \geq 2$, this map reduces the descent statistic by exactly one. Therefore, for $n \geq 2$,
\begin{equation}\label{eq:typeA-Eul-is-PEul}
	P_{\arr_n}(z) = z E_{\arr_{n-1}}(z).
\end{equation}
That is, the Primitive Eulerian polynomials of type A are just the usual Eulerian polynomials (with an additional factor of $z$ and a shift of $1$ on its index).
This explains the appearance of the Eulerian numbers in \cref{ex:exc-A4}.
With this identification in mind, the recursion obtained by \cref{p:PEul-recursion} is equivalent to the following well-known quadratic recurrence for the classical Eulerian polynomials:
\[
	E_{\arr_n}(z) = (1+z)E_{\arr_{n-1}}(z) + z \sum_{k=1}^{n-2} \binom{n-1}{k}  E_{\arr_k}(z) E_{\arr_{n-1-k}}(z)
	\qquad
	\text{for all } n \geq 2.
\]
See for example \citep[Theorem 1.6]{petersen15}.

\section{The type B Primitive Eulerian polynomial}\label{ss:PEul-B}

The \defn{type B Coxeter arrangement} $\barr_n$ in $\RR^n$ consists of the hyperplanes with equations $x_i = x_j$, $x_i = - x_j$ for all $1 \leq i < j \leq n$, and $x_i = 0$ for all $1 \leq i \leq n$.
\[
\begin{gathered}
\begin{tikzpicture}[<->]
\node [] at (-2.2,1.3) {$\barr_2$};
\draw (-90:1.5) -- (90:1.5) node [above] {\small$x_1 = 0$};
\draw (180:1.5) -- (0:1.5) node [right] {\small$x_2 = 0$};
\draw (45+180:1.5) -- (45:1.5) node [above right] {\small$x_1 = x_2$};
\draw (-45+180:1.5) -- (-45:1.5) node [below right] {\small$x_1 = -x_2$};
\end{tikzpicture}
\end{gathered}
\hspace*{.2\linewidth}
\begin{gathered}
\begin{tikzpicture}
\node [] at (0,1.3) {$\barr_3$};
\node [] at (0,-1.3) {$\phantom{\barr_3}$};
\end{tikzpicture}
\includegraphics[scale=.12]{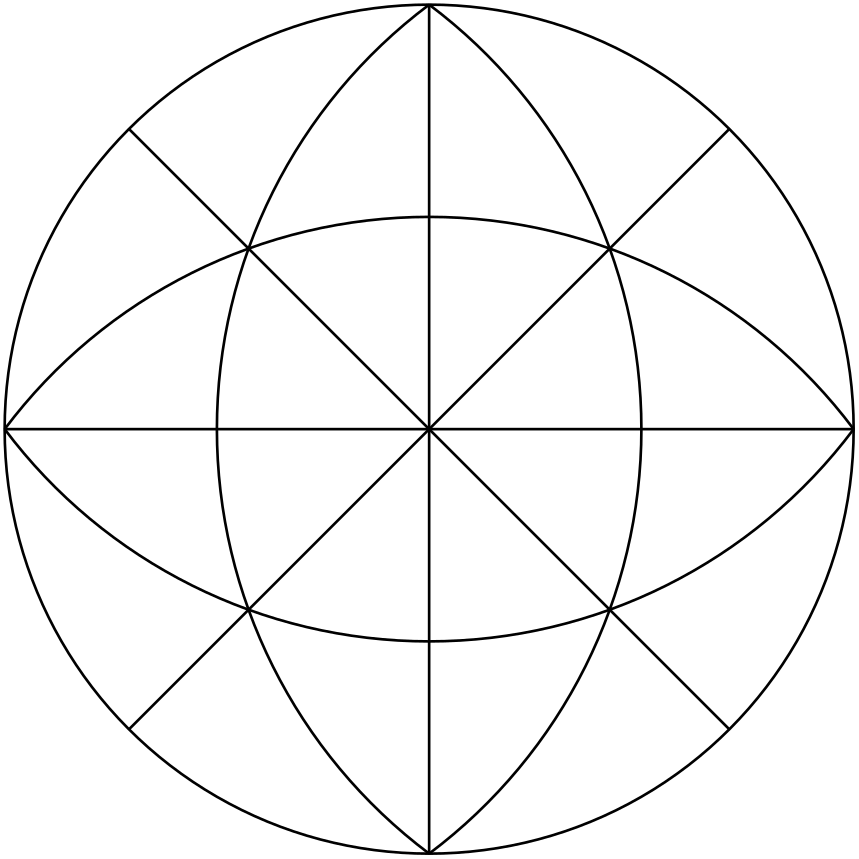}
\end{gathered}
\]
It is the reflection arrangement corresponding to the \defn{hyperoctahedral group} $\frakB_n$, the Coxeter group of type ${\rm B}_n$.
$\frakB_n$ is the group of permutations $w$ of $[\pm n] := \{-n,\dots,-1,1,\dots,n \}$ satisfying $w(-i) = -w(i)$.
Elements of $\frakB_n$ are called \defn{signed permutations}.
The \defn{window notation} of a signed permutation $w \in \frakB_n$ is the word $w_1 w_2 \dots w_n$, where $w_i = w(i)$.
We often write $\opp{j}$ instead of $-j$ for $j \in [\pm n]$, so $\opp{\opp{j}} = j$.

In a signed permutation, cycles can be of two forms:
\begin{align*}
(( i_1 \, i_2 \dots i_k )) := & ( i_1 \, i_2 \dots i_k ) ( \opp{i_1} \, \opp{i_2} \dots \opp{i_k} ), \\
[ i_1 \, i_2 \dots i_k ] := & ( i_1 \, i_2 \dots i_k  \opp{i_1} \, \opp{i_2} \dots \opp{i_k} ),
\end{align*}
where $\{i_1,i_2,\dots,i_k\} \subseteq [\pm n]$ is an \defn{involution-exclusive} subset; see for instance \citep{Kerber71rep1,bw02artin}.
Recall that a subset $S \subseteq [\pm n]$ is involution-exclusive if $S \cap \opp{S} = \emptyset$, where $\opp{S} := \{ \opp{j} \,:\, j \in S\}$.
Cycles of the form $((\dots))$ are called \defn{paired cycles}, while those of the form $[\dots]$ are called \defn{balanced cycles}.
Every element in $\frakB_n$ decomposes uniquely as a product of disjoint paired or balanced cycles, up to reordering and equivalence of cycles. For instance, the following are two expressions in cycle notation for the element $w \in \frakB_7$ whose window notation is
$\opp{1} \, 2 \, \opp{4} \, \opp{3} \, \opp{6} \, \opp{7} \, \opp{5}$:
\[
[1] \, ((2)) \, ((3 \, \opp{4})) \, [5 \, \opp{6} \, 7 ]
=
[\opp{1}] \, ((2)) \, ((\opp{3} \, 4)) \, [\opp{6} \, 7 \, \opp{5}].
\] 
We will make use of the following statistics on signed permutations.
For $w \in \frakB_n$, define
\begin{align*}
\des_A(w) & = \# \{ i \in [n-1] \,:\, w(i) > w(i+1) \};\\
\des_B(w) & = \# \{ i \in [0,n-1] \,:\, w(i) > w(i+1) \}, & \text{where } w(0) := 0 ; \\
\exc_A(w) & = \# \{i \in [n-1] \,:\, w(i) > i \}; \\
\fneg(w) & = \# \{i \in [n] \,:\, w(i) < 0 \}.
\end{align*}
$\des_B$ corresponds to the Coxeter descent statistic, so we will abbreviate it to $\des$ when no confusion arises.

Adin and Roichman \citep{ar01fmaj} pioneered the study of \emph{flag statistics} on the hyperoctahedral group. Soon after, Adin, Brenti, and Roichman \citep{abr01desmaj} introduced the \defn{flag-descent} statistic $\fdes$, which in some sense \emph{refines} $\des$.
Later, Bagno and Garber \citep{bg06colorexc} introduced the \defn{flag-excedance} statistic $\fexc$ (although with a different name and in the more general context of colored permutations).
We review these definitions below:
\begin{align*}
\fdes(w) & = \des_A(w) + \des_B(w); \\
\fexc(w) & = 2 \exc_A(w) + \fneg(w); \\
\exc_B(w) & = \lfloor \dfrac{ {\fexc}(w) + 1 }{2} \rfloor.
\end{align*}
Foata and Han \citep{fh09signedV} proved that $\fexc$ and $\fdes$ have the same distribution.
Since $\des(w) = \des_B(w) = \lfloor \dfrac{ {\fdes}(w) + 1 }{2} \rfloor$ for all signed permutations $w \in \frakB_n$, the statistic $\exc_B$ is also Eulerian (i.e. it has the same distribution as $\des$).

Similar to the type A case, \citep[Corollary 5.5]{bastidas20polytope} gives an interpretation of the coefficients of $P_{\barr_n}(z)$ in terms of the statistic $\exc_B$ on the cuspidal elements of $\frakB_n$:
\begin{equation}\label{eq:PE-B-exc}	
P_{\barr_n}(z) = \sum_{w \in {\rm cusp}(\frakB_n)} z^{\exc_B(w)},
\end{equation}
The cuspidal elements of the hyperoctahedral group $\frakB_n$ are those whose cycle decomposition only involves balanced cycles.

\begin{example}\label{ex:fexc-B3}
The distribution of $\exc_A$, $\fneg$, and $\exc_B$ on ${\rm cusp}(\frakB_3)$ is shown below.
\begin{center}
{
\renewcommand{\arraystretch}{1.1}
\begin{tabular}{c|c|c|c}
$w$ & $\exc_A$ & $\fneg$ & $\exc_B$ \\
\hline
$[1 \, 2 \, 3]$  & $2$ & $1$ & $3$ \\
$[1][2 \, 3]  $  & $1$ & $1$ & $2$ \\
$[2][1 \, 3]  $  & $1$ & $1$ & $2$ \\
$[3][1 \, 2]  $  & $1$ & $1$ & $2$ \\
$[1 \, 3 \, 2]$  & $1$ & $1$ & $2$
\end{tabular}
\quad
\begin{tabular}{c|c|c|c}
$w$ & $\exc_A$ & $\fneg$ & $\exc_B$ \\
\hline
$[3 \, 1 \, 2]      $ & $1$ & $1$ & $2$ \\
$[2 \, 1 \, 3]      $ & $1$ & $1$ & $2$ \\
$[2 \, 3 \, 1]      $ & $1$ & $1$ & $2$ \\
$[1 \, \opp{3} \, 2]$ & $0$ & $3$ & $2$ \\
$[1 \, \opp{2} \, 3]$ & $0$ & $3$ & $2$
\end{tabular}
\quad
\begin{tabular}{c|c|c|c}
$w$ & $\exc_A$ & $\fneg$ & $\exc_B$ \\
\hline
$[1][2][3]    $ & $0$ & $3$ & $2$ \\
$[1][3 \, 2]  $ & $0$ & $2$ & $1$ \\
$[2][3 \, 1]  $ & $0$ & $2$ & $1$ \\
$[3][2 \, 1]  $ & $0$ & $2$ & $1$ \\
$[3 \, 2 \, 1]$ & $0$ & $1$ & $1$ 
\end{tabular}
}
\end{center}
Thus, $P_{\barr_3}(z) = z^3 + 10 z^2 + 4 z$.
\end{example}

\begin{remark}
Other definitions of \emph{excedance-like} statistics on the hyperoctahedral group
have been considered, in chronological order, by
Steingrimsson \citep{steingrimsson92},
Brenti \citep{brenti94cox-eul},
Fire \citep{fire04statistics},
Bagno and Garber~\citep{bg06colorexc}.
In fact, except for Brenti's definition, these apply in the more general context of colored permutations.
Restricted to the hyperoctahedral group:
\begin{itemize}[wide]
\item Steingrimsson and Brenti's statistics are both Eulerian and have the same distribution as $\exc_B$ on cuspidal elements.
      That is, Formula \cref{eq:PE-B-exc} can also be expressed in terms of these statistics.

\item Fire's statistic (which agrees with Bagno and Garber absolute excedance) has the same distribution as $\fexc$, but not when restricted to cuspidal elements.

\item The color excedance of Bagno and Garber coincides with $\fexc$.
\end{itemize}
\end{remark}

Let us now interpret the coefficients of $P_{\barr_n}(z)$ using \cref{t:sharp-distr}.
We identify the elements of $\frakB_n$ and the regions of the type B Coxeter arrangement $\barr_n$ as follows:
\[
w = w_1 w_2 \dots w_n \in \frakB_n
\quad
\longleftrightarrow
\quad
C_w := \{ x \in \RR^n \,:\, 0 \leq x_{w_1} \leq x_{w_2} \leq \dots \leq x_{w_n} \} \in \cham[\barr_n],
\]
where $x_{\opp{i}} = - x_i$ for all $i \in [n]$.
Again, the weak order of $\cham[\barr_n]$ with respect to the region $C_e$ coincides with the weak order of $\frakB_n$ as a Coxeter group. In particular, $\des_B(w) = \des_\preceq(C_w)$.

Let $\sfh = \sfh_v^-$ be the halfspace determined by the vector $v = (1,2,4,\dots,2^{n-1}) \in \RR^n$.
Then, $C_w \subseteq \sfh$ if and only if all the right-to-left maxima of $|w| := |w_1| |w_2| \dots |w_n| \in \frakS_n$ are negative in $w$, see \citep[Proposition 7.2]{bw04geombases}.
We let $BW^B_n$ denote the collection of such elements.
The vector $v$ is very generic and $v \in C_e$.
The following is a direct consequence of \cref{t:sharp-distr}.

\begin{theorem}\label{t:PE-B-des}
For all $n \geq 1$,
\[
	P_{\barr_n}(z) = \sum_{w \in BW^B_n} z^{\des(w)}.
\]
\end{theorem}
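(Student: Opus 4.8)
The plan is to apply \cref{t:sharp-distr} directly with the explicit very generic vector $v = (1, 2, 4, \dots, 2^{n-1})$. Since the type B Coxeter arrangement $\barr_n$ is a reflection arrangement, it is sharp, so \cref{t:sharp-distr} applies once we verify that $v$ is very generic. First I would check genericity: since the coordinates of $v$ are strictly increasing and positive, $v$ lies in the interior of the base region $C_e = \{0 \leq x_1 \leq x_2 \leq \dots \leq x_n\}$, so $v$ is not on any hyperplane of $\barr_n$, giving $B(v) = C_e$. Then I would verify that $\sfh_v^- = \{x : \langle v, x\rangle \leq 0\}$ is generic with respect to $\barr_n$, i.e. its bounding hyperplane $\rmH_v = v^\perp$ contains no rank $1$ flat of $\barr_n$; this amounts to observing that no nonzero vector of the form $\pm e_i$ or $e_i \pm e_j$ is orthogonal to $v$, which follows immediately from the fact that the dyadic valuations $1, 2, 4, \dots, 2^{n-1}$ are all distinct and nonzero (so no two of them are equal or sum to zero, and none is zero). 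Hence $v$ is very generic.

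Next I would invoke the identification of \citep[Proposition 7.2]{bw04geombases} recalled just before the statement: $C_w \subseteq \sfh_v^-$ if and only if every right-to-left maximum of $|w| = |w_1|\,|w_2|\cdots|w_n| \in \frakS_n$ is negative in $w$, which is precisely the defining condition for $w \in BW^B_n$. Combined with the identification of $\frakB_n$ with $\cham[\barr_n]$ via $w \leftrightarrow C_w$ under which the Coxeter weak order with base region $C_e$ matches $\preceq_{C_e}$ and $\des_B(w) = \des_{\preceq_{C_e}}(C_w)$ (both recalled in the paragraph preceding the theorem, citing the standard correspondence), \cref{t:sharp-distr} gives
\[
	P_{\barr_n}(z) = \sum_{C \subseteq \sfh_v^-} z^{\des_{\preceq_{B(v)}}(C)} = \sum_{w \in BW^B_n} z^{\des_B(w)} = \sum_{w \in BW^B_n} z^{\des(w)},
\]
using $\des = \des_B$ on $\frakB_n$. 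This handles $n \geq 2$; the case $n = 1$ should be checked separately against \cref{ex:PEul-rk1}, where $\barr_1$ is a single hyperplane, $P_{\barr_1}(z) = z$, and $BW^B_1 = \{\opp{1}\}$ with $\des(\opp{1}) = 1$ (since $w(0) = 0 > w(1) = -1$), giving $z^1$ as desired.

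The main obstacle — and it is a mild one, since the key geometric input is the already-established \cref{t:sharp-distr} — is the careful verification of the Björner–Wachs characterization of $\{w : C_w \subseteq \sfh_v^-\}$ and its precise match with the combinatorial description of $BW^B_n$. Concretely, one must argue that $C_w \subseteq \sfh_v^-$ means $\langle v, x\rangle \leq 0$ for every $x \in C_w$, which (since $C_w$ is a cone) is equivalent to $\langle v, r\rangle \leq 0$ for each ray $r$ of $C_w$; one then translates this ray condition into the statement about right-to-left maxima of $|w|$ being negative in $w$, exactly as in \citep[Proposition 7.2]{bw04geombases}. If one wishes to be self-contained, this translation can be carried out by induction on $n$ using the nested structure of the inequalities $0 \leq x_{w_1} \leq \dots \leq x_{w_n}$ together with the rapid growth $v_{k} = 2 v_{k-1} > v_1 + \dots + v_{k-1}$, which ensures that the sign of $\langle v, x\rangle$ on $C_w$ is controlled by the largest-index coordinate $|w_n|$, then $|w_{n-1}|$, and so on — i.e. precisely by the right-to-left maxima. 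I would present the argument by simply citing \citep{bw04geombases} for this step, as the surrounding text already does, and keep the proof to the three-line application above plus the $n=1$ sanity check.
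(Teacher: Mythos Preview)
Your approach is exactly the paper's: apply \cref{t:sharp-distr} with $v=(1,2,4,\dots,2^{n-1})$, note $v\in C_e$, and cite \citep[Proposition~7.2]{bw04geombases} to identify $\cham(\sfh_v^-)$ with $BW^B_n$. The paper records this as a one-line ``direct consequence of \cref{t:sharp-distr}''; your added details (genericity check, $n=1$ case) are fine and in the same spirit.

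One small correction to your genericity verification: the rank~$1$ flats of $\barr_n$ are \emph{not} only the lines through $\pm e_i$ and $e_i\pm e_j$. For $n\geq 3$ they are all lines spanned by nonzero vectors in $\{-1,0,1\}^n$ (e.g.\ the line through $(1,1,1)$ in $\barr_3$). So what you actually need is $\langle v,u\rangle\neq 0$ for every nonzero $u\in\{-1,0,1\}^n$, which is exactly the ``rapid growth'' property $2^{k-1}>2^{k-2}+\dots+2^0$ you invoke later. Just move that observation up to the genericity step and drop the incorrect description of the rank~$1$ flats.
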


\begin{example}
The distribution of the descent statistic on $BW^B_3$ is shown below.
\begin{center}
{\renewcommand{\arraystretch}{1.1}
\begin{tabular}{c|c}
$w$ & $\des(w)$ \\
\hline
$\opp{1} \, \opp{2} \, \opp{3}$ & $3$ \\
$      1 \, \opp{2} \, \opp{3}$ & $2$ \\
$\opp{1} \,       2 \, \opp{3}$ & $2$ \\
$\opp{2} \, \opp{1} \, \opp{3}$ & $2$ \\
$      2 \, \opp{1} \, \opp{3}$ & $2$ 
\end{tabular}
\qquad\qquad
\begin{tabular}{c|c}
$w$ & $\des(w)$ \\
\hline
$\opp{2} \, \opp{1} \, \opp{3}$ & $2$ \\
$      2 \,       1 \, \opp{3}$ & $2$ \\
$\opp{1} \, \opp{3} \, \opp{2}$ & $2$ \\
$\opp{3} \, \opp{1} \, \opp{2}$ & $2$ \\
$\opp{3} \,       1 \, \opp{2}$ & $2$ 
\end{tabular}
\qquad\qquad
\begin{tabular}{c|c}
$w$ & $\des(w)$ \\
\hline
$\opp{2} \, \opp{3} \, \opp{1}$ & $2$ \\
$      1 \,       2 \, \opp{3}$ & $1$ \\
$      1 \, \opp{3} \, \opp{2}$ & $1$ \\
$      2 \, \opp{3} \, \opp{1}$ & $1$ \\
$\opp{3} \, \opp{2} \, \opp{1}$ & $1$ 
\end{tabular}}
\end{center}
Observe that this distribution agrees with \cref{ex:fexc-B3}.
\end{example}

\begin{figure}[ht]
\includegraphics[height=.2\textheight]{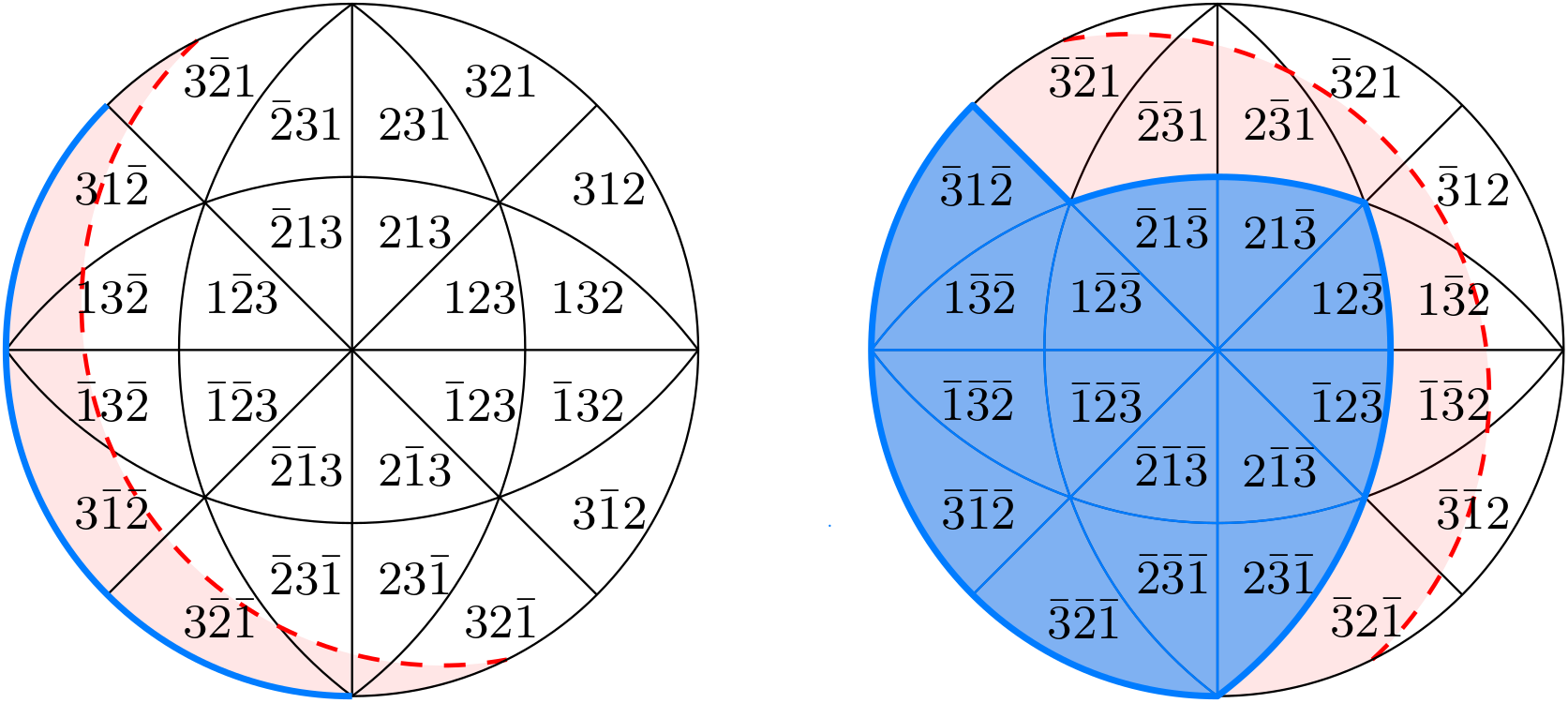}
\caption{Regions corresponding to elements in $BW^B_3$.}
\end{figure}

\cref{tab:PEul-B} shows the Primitive Eulerian polynomials of type B for the first values of $n$.

\begin{table}[ht]
\begin{tabular}{c||l}
$n$ & $P_{\barr_n}(z)$ \\
\hline\hline
$0$ & $1$ \\
$1$ & $z$ \\
$2$ & $z^2 + 2 z$ \\
$3$ & $z^3 + 10 z^2 + 4 z$ \\
$4$ & $z^4 + 36 z^3 + 60 z^2 + 8 z$ \\
$5$ & $z^5 + 116 z^4 + 516 z^3 + 296 z^2 + 16 z$
\end{tabular}
\caption{The first Primitive Eulerian polynomials of type B.
         The coefficients form the sequence \oeis{A185411}.}
\label{tab:PEul-B}
\end{table}

\subsection{Relation to the 1/2-Eulerian polynomials}

The coefficients appearing in \cref{tab:PEul-B} are the \defn{$1/2$-Eulerian numbers}, as introduced by Savage and Viswanathan in \citep{sv12kEulerian}.
However, the order of the coefficients is reversed with respect to the {$1/2$-Eulerian polynomials}.
We prove this observation in \cref{p:PEulB-1_2Eul}.

Let $I_{n,2}$ be the set of \defn{$2$-inversion sequences} defined by
\[
I_{n,2} = \{ e \in \ZZ^n \,:\, 0 \leq e_i \leq 2(i-1) \}.
\]
For $e \in I_{n,2}$, define
\[
\asc(e) = \# \bigg\{ i \in [n-1] \,:\, \dfrac{e_i}{2(i-1) + 1} < \dfrac{e_{i+1}}{2i + 1} \bigg \}.
\]
The \defn{$1/2$-Eulerian polynomial} $A_n^{(2)}(z)$ is the polynomial that keeps track of the distribution of $\asc$ on $I_{n,2}$.
Explicitly,
\[
A_n^{(2)}(z) := \sum_{e \in I_{n,2}} z^{\asc(e)}.
\]
Savage and Viswanathan \citep{sv12kEulerian} showed that the exponential generating function of the polynomials $A_n^{(2)}(z)$ is $A(z,2x)^{1/2}$, with $A(z,x)$ as in equation \cref{eq:generating-Eul-A}.
A more general definition of $1/k$-Eulerian polynomial $A^{(k)}_n(z)$ exists.
We do not explore these polynomials here.

\begin{proposition}\label{p:PEulB-1_2Eul}
For all $n \geq 0$, $P_{\barr_n}(z) = z^n A^{(2)}_n(\tfrac{1}{z})$.
\end{proposition}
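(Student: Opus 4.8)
The plan is to compare exponential generating functions, using the formula for the generating function of the $A_n^{(2)}(z)$ recalled above and the type~B case of \cref{t:PEul-generating}. Set $B_n(z) := z^n A_n^{(2)}(1/z)$; these are honest polynomials (of degree $n$, monic, with zero constant term for $n\geq1$), and the claim is that $B_n(z) = P_{\barr_n}(z)$ for all $n\geq0$. Since Savage and Viswanathan's result reads $\sum_{n\geq0} A_n^{(2)}(z)\,x^n/n! = A(z,2x)^{1/2}$, substituting $z\mapsto 1/z$ and $x\mapsto zx$ gives, as an identity of formal power series in $x$ over $\RR(z)$ with the branch of the square root normalized to have constant term~$1$,
\[
	\sum_{n\geq0} B_n(z)\,\frac{x^n}{n!} \;=\; \sum_{n\geq0} A_n^{(2)}(1/z)\,\frac{(zx)^n}{n!} \;=\; A\!\left(\tfrac1z,\,2zx\right)^{\!1/2}.
\]

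The heart of the argument is then the elementary identity
\[
	A\!\left(\tfrac1z,\,2zx\right) \;=\; e^{2x(z-1)}\,A(z,2x),
\]
which I would verify by plugging in Euler's closed form $A(z,x) = \frac{z-1}{z-e^{x(z-1)}}$ from \cref{eq:generating-Eul-A} and abbreviating $u := e^{2x(z-1)}$: the left-hand side becomes $\frac{1/z-1}{1/z-u^{-1}} = \frac{(1-z)u}{u-z}$ and the right-hand side $\frac{(z-1)u}{z-u} = \frac{(1-z)u}{u-z}$, so they agree. Both sides are $1$ at $x=0$, so the square roots with constant term~$1$ also agree, and therefore
\[
	\sum_{n\geq0} B_n(z)\,\frac{x^n}{n!} \;=\; e^{x(z-1)}\,A(z,2x)^{1/2} \;=\; \sum_{n\geq0} P_{\barr_n}(z)\,\frac{x^n}{n!},
\]
the last equality being exactly the type~B row of \cref{t:PEul-generating}. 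Comparing coefficients of $x^n/n!$ yields $B_n(z) = P_{\barr_n}(z)$, i.e.\ $P_{\barr_n}(z) = z^n A_n^{(2)}(1/z)$.

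The computation is routine, so there is no serious obstacle; the only points needing a little care are (i) phrasing the substitution $z\mapsto 1/z$, $x\mapsto zx$ and the two square roots at the level of formal power series so that the branch choices are unambiguous (both square roots have constant term $1$, and two power series with constant term $1$ and equal squares coincide), and (ii) the degenerate case $n=0$, where $A_0^{(2)}(z)=1=P_{\barr_0}(z)$ directly. Alternatively one could seek a bijection $BW^B_n \to I_{n,2}$ carrying $\des(w)$ to $n-\asc(e)$ and invoke the descent interpretation \cref{t:PE-B-des}, but the generating-function proof above is shorter and avoids unwinding the Bj\"orner--Wachs description of $BW^B_n$.
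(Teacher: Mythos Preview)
Your proof is correct and follows essentially the same approach as the paper's own proof: both compute the exponential generating function of $z^n A_n^{(2)}(1/z)$ via the substitution $z\mapsto 1/z$, $x\mapsto zx$ in Savage--Viswanathan's identity, simplify $A(1/z,2zx)^{1/2}$ to $e^{x(z-1)}A(z,2x)^{1/2}$ using Euler's closed form, and then invoke the type~B row of \cref{t:PEul-generating}. Your write-up is slightly more explicit about the square-root branch and the formal-power-series setting, but the argument is the same.
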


\begin{proof}
In view of Savage and Viswanathan's result, the generating function of the polynomials $z^n A^{(2)}_n(\tfrac{1}{z})$ is
\[
A(\tfrac{1}{z},2xz)^{1/2} =
\Big( \dfrac{\tfrac{1}{z}-1}{\tfrac{1}{z}-e^{2zx(\tfrac{1}{z}-1)}}  \Big)^{1/2} = 
\Big( \dfrac{(1-z) e^{2x(z-1)}}{e^{2x(z-1)}-z} \Big)^{1/2} = 
e^{x(z-1)} A(z,2x)^{1/2}. 
\]
This is precisely the generating function of the polynomials $P_{\barr_n}(z)$ in \cref{t:PEul-generating}.
\end{proof}

There are yet more ways to interpret the polynomials $A^{(k)}_n(z)$.
Savage and Schuster \citep{ss12lecturehall} showed that $A^{(k)}_n(z)$ is the $h^*$-polynomial of the \emph{$k$-lecture hall polytope} of dimension $n$; and Ma and Mansour \citep{mm15kStirling} proved that $A^{(k)}_n(z)$ is the generating function for the \emph{ascent-plateau} statistic on \emph{$k$-Stirling permutations}.
More recently, Tolosa Villareal \citep{tolosa21thesis} conjectured that $A^{(2)}_n(z)$ is the $h$-polynomial of the \defn{positive signed sum system} $\Delta(\cQ^+(v))$ associated to a \emph{generic weight}~$v$.
We recall the definition of $\Delta(\cQ^+(v))$ and prove this conjecture below.

A weight $v \in \RR^n$ is \defn{generic} if for all involution-exclusive subsets $\emptyset \subsetneq J \subseteq [\pm n]$, $v_J := \sum_{i \in J} v_i \neq 0$.
To such a weight $v$, we associate the collection $\cQ^+(v) = \{ J \,:\, v_J > 0 \}$.
Elements of $\cQ^+(v)$ are partially ordered by inclusion, and $\Delta(\cQ^+(v))$ denotes the corresponding chain complex.

\begin{proposition}[{\citep[Conjecture 5.1]{tolosa21thesis}}]\label{p:Tolosa}
For every generic weight $v \in \RR^n$, the $h$-polynomial of $\Delta(\cQ^+(v))$
is $A^{(2)}_n(z)$.
\end{proposition}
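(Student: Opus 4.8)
The plan is to show that both sides have the same exponential generating function (in the variable $x$, with $n$ indexing the terms), which by \cref{p:PEulB-1_2Eul} and \cref{t:PEul-generating} reduces to identifying the generating function of the $h$-polynomials $h(\Delta(\cQ^+(v)),z)$ with $A(z,2x)^{1/2} = \sum_n z^n A^{(2)}_n(1/z) \cdot x^n/n!$ — wait, more precisely with $\sum_n A^{(2)}_n(z) x^n/n!$. So the goal is to establish
\[
\sum_{n \geq 0} h(\Delta(\cQ^+(v_n)),z) \, \frac{x^n}{n!} = A(z,2x)^{1/2},
\]
for any choice of generic weights $v_n \in \RR^n$. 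The first step is to observe that $h(\Delta(\cQ^+(v)),z)$ does not depend on the choice of generic $v$: as $v$ varies over a chamber of the (central) arrangement of hyperplanes $\{v_J = 0\}$ for involution-exclusive $J$, the poset $\cQ^+(v)$ changes, but the $h$-vector is constant. The cleanest way to see this is to interpret $\Delta(\cQ^+(v))$ as $\face_{\barr_n}(\sfh)$ for a suitable halfspace, or more directly to relate $\cQ^+(v)$ to the faces of $\barr_n$ contained in $\sfh_v^-$; this is exactly the Björner–Wachs picture recalled in \cref{ss:PEul-B}, where involution-exclusive subsets with $v_J$ of a fixed sign index faces of the braid-type arrangement lying on one side of a generic hyperplane. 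Then $h(\Delta(\cQ^+(v)),z)$ equals $z^n h(\face_{\barr_n}(\sfh_v^-), 1/z)$ up to reindexing, and by the proposition preceding \cref{p:gen-upset-PEul} this is $P_{\barr_n}(z)$ — but reversed, matching $z^n A^{(2)}_n(1/z)$ via \cref{p:PEulB-1_2Eul}.

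So the core of the proof is the combinatorial/geometric identification: \emph{the chain complex $\Delta(\cQ^+(v))$ is isomorphic (as a simplicial complex, or at least has the same $f$-vector) to $\face_{\barr_n}(\sfh_v^-) \setminus \{O\}$}, the complex of nonzero faces of the type $B$ Coxeter arrangement contained in the generic halfspace $\sfh_v^-$. A face of $\barr_n$ is specified by its sign sequence, which records for each hyperplane $x_i = 0$, $x_i = x_j$, $x_i = -x_j$ whether the face lies on it or strictly on one side; equivalently, a face corresponds to an ordered set partition of a signed-type combinatorial object. The key bookkeeping observation is that a ray (rank-$1$ face) of $\barr_n$ is, up to sign, of the form $e_{i_1} + \dots + e_{i_k}$ for an involution-exclusive subset $\{i_1,\dots,i_k\}$, and such a ray lies in $\sfh_v^- = \{\langle v, x\rangle \leq 0\}$ precisely when $v_{\{i_1,\dots,i_k\}} \leq 0$, i.e. when the complementary-sign set $\{\bar i_1,\dots,\bar i_k\}$ lies in $\cQ^+(v)$. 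One then checks that a collection of rays spans a common face of $\barr_n$ inside $\sfh_v^-$ if and only if the corresponding subsets form a chain in $\cQ^+(v)$ — this is where the nested structure of faces of the Coxeter complex of type $B$ matches chains of involution-exclusive sets. This gives a simplicial isomorphism $\Delta(\cQ^+(v)) \cong \face_{\barr_n}(\sfh_v^-)\setminus\{O\}$, hence equal $f$-vectors and equal $h$-vectors.

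Once that isomorphism is in hand, the rest is assembly: $h(\face_{\barr_n}(\sfh))$ is governed by $\Psi_{\barr_n}$ and thus by $P_{\barr_n}$ via \cref{eq:cochar-faces-arr} and the proposition computing $P_\arr(z) = z^n h(\face(\sfh),1/z)$; then \cref{p:PEulB-1_2Eul} rewrites $P_{\barr_n}(z)$ in terms of $A^{(2)}_n$; and comparing the normalization (the shift between $h$-polynomial conventions for $\face(\sfh)$ versus the chain complex $\Delta(\cQ^+(v))$, together with the reversal $z \leftrightarrow 1/z$) yields $h(\Delta(\cQ^+(v)),z) = A^{(2)}_n(z)$ exactly. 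The main obstacle I anticipate is the precise dictionary in the middle step — matching sign sequences of faces of $\barr_n$ lying in a generic halfspace with chains of involution-exclusive subsets, and in particular getting the sign conventions and the $+/-$ swap ($\cQ^+(v)$ versus rays in $\sfh_v^-$) consistent, and being careful about whether the empty face/empty chain and the top-dimensional faces are counted the same way on both sides (this is what controls whether one gets $A^{(2)}_n(z)$ on the nose rather than off by a factor of $z$ or a reversal). A secondary, softer point is to confirm that $\Delta(\cQ^+(v))$ really is pure of the expected dimension so that its $h$-polynomial is well-behaved; this follows from purity of $\face_{\barr_n}(\sfh_v^-)$, which in turn follows from Zaslavsky's theorem on bounded complexes of affine arrangements as invoked in the proof of \cref{p:gen-upset-PEul}.
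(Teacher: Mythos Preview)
Your proposal is correct and takes essentially the same approach as the paper: identify $\Delta(\cQ^+(v))$ with the complex $\face_{\barr_n}(\sfh)$ of faces of the type~$B$ arrangement contained in a generic halfspace (via the bijection between nonempty involution-exclusive subsets and rays of $\barr_n$), then combine the identity $P_{\barr_n}(z) = z^n h(\face(\sfh),1/z)$ with \cref{p:PEulB-1_2Eul}. The opening detour through generating functions is unnecessary, and your sign convention (working with $\sfh_v^-$ and then patching via $J \mapsto \bar J$) is slightly more roundabout than the paper's direct use of $\sfh = \{x : \langle v,x\rangle \geq 0\}$, but the substance is identical.
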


\begin{proof}
Proper involution-exclusive subsets correspond to rays of $\barr_n$.
In this manner, $\cQ^+(v)$ corresponds to the rays of $\barr_n$ contained in the half-space $\sfh = \{ x \in \RR^n \,:\, \langle v , x \rangle \geq 0 \}$, and $\Delta(\cQ^+(v))$ corresponds to the complex $\face(\sfh)$.
Since the $f$-polynomial, and therefore the $h$-polynomial, of $\Delta(\cQ^+(v))$ is independent of $v$, we can assume that $-v$ lies in the fundamental region of $\frakB_n$.
The result follows by \cref{p:des-codim,p:PEulB-1_2Eul}.
\end{proof}

\begin{remark}
Tolosa Villareal's conjecture is originally stated in terms of the $h^*$-polynomial of the $2$-lecture hall polytope.
The statement above is equivalent in view of Savage and Schuster's work \citep[Theorem 5]{ss12lecturehall}.
\end{remark}

Tolosa Villareal also conjectured a stronger version of \cref{p:Tolosa}: The $2$-lecture hall polytope $\cP_{n,2}$ has a unimodular triangulation that is combinatorially isomorphic to the cone over $\Delta(\cQ^+(1,2,\dots,2^n))$ \citep[Conjecture 5.2]{tolosa21thesis}. Constructing such triangulations for all $n$ is a work in progress of Ardila and Tolosa Villareal (personal communication).

In view of \cref{p:PEulB-1_2Eul}, all the recurrences for the polynomials $A^{(2)}(z)$ discovered by Savage and Viswanathan can be translated into recurrences for the type B Primitive Eulerian polynomials.
For instance, we can verify that the generating function $P_\cB(z,x)$ for the polynomials $P_{\cB_n}(z)$ satisfies the differential equation
\[
z P_\cB(z,x) + 2(x z - 1) \dfrac{\partial P_\cB(z,x)}{\partial x} + 2z(1 - z)\dfrac{\partial P_\cB(z,x)}{\partial z} = 0,
\]
and deduce the following result, which is a specialization of the recurrence (20) in \citep{sv12kEulerian}.

\begin{proposition}The type B Primitive Eulerian polynomials are determined by the differential recurrence
\[
	P_{\barr_n}(z) = (2n-1)zP_{\barr_{n-1}}(z) + 2z(1-z)P'_{\barr_{n-1}}(z),
\]
with initial condition $P_{\barr_{0}}(z) = 1$.
\end{proposition}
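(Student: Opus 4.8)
The plan is to pass to exponential generating functions. Write $P_\cB(z,x) := \sum_{n \ge 0} P_{\barr_n}(z)\, x^n/n!$. By \cref{t:PEul-generating} this series equals $e^{x(z-1)}A(z,2x)^{1/2}$, with $A(z,x) = (z-1)/(z - e^{x(z-1)})$ the generating function of \cref{eq:generating-Eul-A}; since $A(z,0) = 1$, setting $x = 0$ recovers the initial condition $P_{\barr_0}(z) = 1$. The recurrence will then be read off from a first-order linear PDE satisfied by $P_\cB(z,x)$, by comparing coefficients of $x^n/n!$.

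The PDE is obtained from the closed form of $A$. Differentiating $A(z,u) = (z-1)/(z - e^{u(z-1)})$ in its second argument and using $e^{u(z-1)} = z - (z-1)/A(z,u)$ yields the Riccati-type identity $\partial_u A(z,u) = z\,A(z,u)^2 - (z-1)A(z,u)$; applied at $u = 2x$, with the prefactor $e^{x(z-1)}$ cancelling, this gives the clean relation $\partial_x P_\cB = z\,A(z,2x)\,P_\cB$. Differentiating $A$ in its first argument produces $\partial_z A(z,u)$; this is the step needing a little care, because $\partial_z A(z,u)$ genuinely retains the variable $u$ (equivalently $x$) and cannot be rewritten in terms of $A(z,u)$ alone. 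Substituting $u = 2x$ into $\partial_z P_\cB = x\,P_\cB + \tfrac12 e^{x(z-1)}A(z,2x)^{-1/2}\,\partial_z A(z,2x)$ and using $\partial_x P_\cB = z\,A(z,2x)\,P_\cB$ to eliminate both $A(z,2x)$ and the remaining explicit $x$, one arrives at
\[
	z\,P_\cB(z,x) + (2xz-1)\,\partial_x P_\cB(z,x) + 2z(1-z)\,\partial_z P_\cB(z,x) = 0 .
\]
Because $\partial_x P_\cB = \sum_{n\ge 0} P_{\barr_{n+1}}(z)\,x^n/n!$ and $x\,\partial_x P_\cB = \sum_{n\ge 0} n\,P_{\barr_n}(z)\,x^n/n!$, the coefficient of $x^n/n!$ on the left-hand side is
\[
	z\,P_{\barr_n}(z) + 2nz\,P_{\barr_n}(z) - P_{\barr_{n+1}}(z) + 2z(1-z)\,P'_{\barr_n}(z),
\]
so $P_{\barr_{n+1}}(z) = (2n+1)z\,P_{\barr_n}(z) + 2z(1-z)\,P'_{\barr_n}(z)$; replacing $n$ by $n-1$ gives the claimed recurrence, which with $P_{\barr_0}(z) = 1$ determines every $P_{\barr_n}(z)$.

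An equivalent route bypasses the PDE entirely. By \cref{p:PEulB-1_2Eul} one has $A_n^{(2)}(t) = t^n P_{\barr_n}(1/t)$, and Savage and Viswanathan's recurrence~(20) in \citep{sv12kEulerian} specializes at $k = 2$ to $A_n^{(2)}(t) = \big(1 + (2n-2)t\big) A_{n-1}^{(2)}(t) + 2t(1-t)\,(A_{n-1}^{(2)})'(t)$. Since the transformation $Q(t) \mapsto t^n Q(1/t)$ sends $Q'(t)$ to $n t^{n-1} Q(1/t) - t^{n-2} Q'(1/t)$, substituting $A_m^{(2)}(t) = t^m P_{\barr_m}(1/t)$ into this recurrence, dividing by $t^{n-1}$, and setting $z = 1/t$ makes the term free of $t$ collapse, leaving exactly $P_{\barr_n}(z) = (2n-1)z\,P_{\barr_{n-1}}(z) + 2z(1-z)\,P'_{\barr_{n-1}}(z)$. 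In both approaches the only real obstacle is bookkeeping rather than ideas: in the first, tracking the explicit $x$-dependence of $\partial_z A(z,2x)$ as it is absorbed into $\partial_x P_\cB$; in the second, carrying the derivative correctly through the reciprocal-and-rescale substitution.
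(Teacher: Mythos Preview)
Your proof is correct and follows essentially the same approach as the paper: verify the first-order PDE for the generating function $P_\cB(z,x)=e^{x(z-1)}A(z,2x)^{1/2}$ and extract coefficients, or alternatively specialize recurrence~(20) of Savage--Viswanathan via \cref{p:PEulB-1_2Eul}. Your PDE has coefficient $(2xz-1)$ rather than the paper's $2(xz-1)$; your version is the one that actually yields the stated recurrence upon coefficient extraction, so you have in effect corrected a typo.
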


This is analogous to the following recurrence for the type B Eulerian polynomials:
\[
E_{\barr_n}(z) = (1+(2n-1)z)E_{\barr_{n-1}}(z) + 2z(1-z)E'_{\barr_{n-1}}(z).
\]

\subsection{Some recurrences}

Remarkably, \cref{p:PEul-recursion} gives a new recurrence for the polynomials $P_{\barr_n}(z)$ that does not follow from those known for $A_n^{(2)}(z)$.

\begin{theorem}\label{t:PEulB-recursion}
The type B Primitive Eulerian polynomials satisfy the following recursion.
With $P_{\barr_0}(z) = 1$,
\[
P_{\barr_n}(z) = zP_{\barr_{n-1}}(z) + \sum_{k=1}^{n-1} \binom{n-1}{k} 2^k P_{\barr_{n-1-k}}(z)P_{\arr_{k+1}}(z),
\]
for all $n \geq 1$.
\end{theorem}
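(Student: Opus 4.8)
The plan is to apply the recursion of \cref{p:PEul-recursion} to $\arr = \barr_n$, taking for $\rmH$ the coordinate hyperplane $\{x_n = 0\}$, and to evaluate both terms on the right-hand side explicitly. For the restriction: restricting every hyperplane of $\barr_n$ to $\{x_n = 0\}$ turns each $x_i = \pm x_n$ into $x_i = 0$, and removes $\{x_n = 0\}$ itself, so $\barr_n^{\rmH} \cong \barr_{n-1}$. This produces the summand $(z-1)P_{\barr_{n-1}}(z)$.

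Next I would identify the rank-one flats of $\barr_n$ and their localizations. The rank-one flats are exactly the lines $\rmL_{T,\epsilon} := \RR(\sum_{i \in T}\epsilon_i e_i)$, where $\emptyset \neq T \subseteq [n]$ and $\epsilon \in \{\pm 1\}^T$ is taken up to global sign reversal (there are $\binom{n}{k}2^{k-1}$ such flats with $|T| = k$). Such a line fails to be contained in $\rmH = \{x_n = 0\}$ precisely when $n \in T$, so the sum in \cref{p:PEul-recursion} ranges over the $\rmL_{T,\epsilon}$ with $n \in T$. A short sign check then shows that the hyperplanes of $\barr_n$ containing $\rmL_{T,\epsilon}$ are: all hyperplanes of $\barr_n$ supported on the coordinates outside $T$ (forming a copy of $\barr_{n-|T|}$), together with, for each pair $a<b$ in $T$, exactly one of $\{x_a = x_b\}$ and $\{x_a = -x_b\}$ (the one selected by the sign $\epsilon_a\epsilon_b$); after the coordinate change $y_i = \epsilon_i x_i$ these last $\binom{|T|}{2}$ hyperplanes become $\{y_a = y_b\}$, i.e.\ the braid arrangement $\arr_{|T|}$. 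No hyperplane of a remaining ``mixed'' type ($x_a = 0$ or $x_a = \pm x_l$ with $a \in T$, $l \notin T$) contains the line, so $(\barr_n)_{\rmL_{T,\epsilon}} \cong \arr_{|T|} \times \barr_{n-|T|}$, and \cref{eq:prod-arr} gives $P_{(\barr_n)_{\rmL_{T,\epsilon}}}(z) = P_{\arr_{|T|}}(z)\,P_{\barr_{n-|T|}}(z)$, independently of $\epsilon$.

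Assembling the pieces: grouping the flats with $n \in T$ by $k = |T|$ contributes $\binom{n-1}{k-1}$ choices of $T$ and $2^{k-1}$ choices of $\epsilon$, so \cref{p:PEul-recursion} yields
\[
P_{\barr_n}(z) = (z-1)P_{\barr_{n-1}}(z) + \sum_{k=1}^{n}\binom{n-1}{k-1}2^{k-1}P_{\arr_k}(z)\,P_{\barr_{n-k}}(z).
\]
The $k=1$ term equals $P_{\barr_{n-1}}(z)$ since $P_{\arr_1}(z)=1$; adding it to $(z-1)P_{\barr_{n-1}}(z)$ produces $zP_{\barr_{n-1}}(z)$, and re-indexing the remaining sum by $j = k-1$ gives exactly the claimed recursion. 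The base case $P_{\barr_0}(z) = 1$ is the empty arrangement.

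The only genuinely delicate point is the description used above of the rank-one flats of $\barr_n$ and their localizations — in particular, producing the factor $2^{k-1}$ from the equivalence classes of sign vectors and checking that $(\barr_n)_{\rmL_{T,\epsilon}}$ is a genuine Cartesian product (no mixed hyperplanes) that does not depend on $\epsilon$. I would therefore spell out carefully, for a fixed line $\rmL_{T,\epsilon}$, precisely which hyperplanes of $\barr_n$ pass through it, as that single computation underlies both the product decomposition and the counting.
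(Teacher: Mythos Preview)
Your proof is correct and follows essentially the same route as the paper: both apply \cref{p:PEul-recursion} with $\rmH=\{x_n=0\}$, identify $\barr_n^{\rmH}\cong\barr_{n-1}$, describe the rank-one flats not in $\rmH$ and their localizations as $\barr_{n-|T|}\times\arr_{|T|}$, and then absorb the bottom term of the sum into $(z-1)P_{\barr_{n-1}}(z)$ to produce $zP_{\barr_{n-1}}(z)$. The only cosmetic difference is that the paper parametrizes the lines via type~B set partitions $\{S_0,S_1,\opp{S_1}\}$ with $n\in S_1$ (counting $S_1\setminus\{n\}$ as an involution-exclusive $k$-subset of $[\pm(n-1)]$, giving $\binom{n-1}{k}2^k$), whereas you parametrize them by pairs $(T,\epsilon)$ modulo $\epsilon\sim-\epsilon$; these are the same objects under $S_1=\{\epsilon_i\, i : i\in T\}$, and the two counts agree after your re-indexing $j=k-1$.
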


This is analogous to the following quadratic recurrence for the type B Eulerian polynomials
\[
E_{\barr_n}(z) = (1+z)E_{\barr_{n-1}}(z) + 2z\displaystyle\sum_{k=1}^{n-1} \binom{n-1}{k} 2^k E_{\barr_{n-1-k}}(z)E_{\arr_{k}}(z),
\]
with $E_{\barr_0}(z) = 1$. See for example \citep[Theorem 13.2]{petersen15}.

In order to prove \cref{t:PEulB-recursion}, we need to review the correspondence between flats of $\barr_n$ and type B partitions of $[\pm n]$.

A \defn{type B set partition} of $[\pm n]$ is a (weak) set partition $\rmX = \{S_0 , S_1 , \opp{S_1} ,\dots,S_k,\opp{S_k}\}$ of $[\pm n]$ such that $S_0 = \opp{S_0}$ is the only block allowed to be empty.
The block $S_0$ is called the \defn{zero block} of $\rmX$.
Since blocks are pairwise disjoint, the nonzero blocks are involution-exclusive.
We write~$\rmX \vdash^B [\pm n]$ to denote that~$\rmX$ is a type B set partition of~$[\pm n]$.
Given a partition~$\rmX \vdash^B [\pm n]$, the corresponding flat of $\barr_n$ is the intersection of the hyperplanes~$x_i = x_j$ for all~$i,j$ that belong to the same block of $\rmX$ (recall that $x_{-i} = -x_i$ for $i \in [n]$).
Observe that $\dim(\rmX)$ is half the number of nonzero blocks of $\rmX$ as a type B partition.
The partial order relation of $\flat[\barr_n]$ becomes the ordering by refinement of set partitions.
If $\rmX$ corresponds to the partition $\{S_0 , S_1 , \opp{S_1} ,\dots,S_k,\opp{S_k}\}$, then
\[
(\barr_n)^\rmX \cong \barr_k
\qqand
(\barr_n)_\rmX \cong \barr_{|S_0|/2} \times \arr_{|S_1|} \times \dots \times \arr_{|S_k|}.
\]

\begin{proof}[Proof of \cref{t:PEulB-recursion}]
Let $n \geq 1$ and $\rmH \in \cB_n$ be the hyperplane $x_n = 0$.
Then, $\cB_n^{\rmH} \cong \cB_{n-1}$.
The lines not in $\rmH$ correspond to partitions $\{S_0,S_1,\opp{S_1}\}$ with $n \notin S_0$.
If the line $\rmL$ corresponds to such partition, then
\[
(\cB_n)_{\rmL} \cong \barr_{|S_0|/2} \times \arr_{|S_1|}.
\]
It follows from \cref{eq:prod-arr,p:PEul-recursion} that
\[
P_{\cB_n}(z) = (z-1)P_{\cB_{n-1}}(z) + \sum P_{\cB_{n-1-k}}(z)P_{\arr_{k+1}}(z),
\]
where the sum is over partitions $\{S_0,S_1,\opp{S_1}\}$ with $n \in S_1$ and $|S_1| = k + 1$.
Such a partition is completely determined by the set $S_1 \setminus \{n\}$, which can be any involution-exclusive $k$ subset of $[\pm (n-1)]$.
The result follows since there are exactly $\binom{n-1}{k}2^k$ such subsets, and
\[
	\binom{n-1}{0}2^0 P_{\cB_{n-1-0}}(z)P_{\arr_{0+1}}(z) = P_{\cB_{n-1}}(z). \qedhere
\]
\end{proof}

We conclude this section with another result on the primitive analogue of a relation between the type A and type B Eulerian polynomials.
An explicit computation shows that
\[
\dfrac{\partial}{\partial x}A(z,x) = e^{x(z-1)}A(z,x)^2,
\]
where $A(x,z)$ is the generating function for the Eulerian polynomials in \cref{eq:generating-Eul-A}.
The following result is obtained by comparing the function above with the generating function of the type~B Eulerian polynomials.

\begin{proposition}For all $n \geq 0$,
\[
\sum_{k=0}^n \binom{n}{k} E_{\cB_k}(z) E_{\cB_{n-k}}(z) = 2^n E_{\arr_{n+1}}(z).
\]
\end{proposition}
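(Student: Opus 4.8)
The plan is to prove the identity at the level of exponential generating functions in the variable $x$. Write $B(z,x) := \sum_{n \geq 0} E_{\barr_n}(z)\,\frac{x^n}{n!}$ for the exponential generating function of the type B Eulerian polynomials; by Brenti's formula recalled above, $B(z,x) = e^{x(z-1)} A(z,2x)$.

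First I would observe that the left-hand side is a binomial convolution: by the usual product rule for exponential generating functions,
\[
B(z,x)^2 = \sum_{n \geq 0} \Big( \sum_{k=0}^n \binom{n}{k} E_{\barr_k}(z)\, E_{\barr_{n-k}}(z) \Big) \frac{x^n}{n!},
\]
so $\sum_{k=0}^n \binom{n}{k} E_{\barr_k}(z) E_{\barr_{n-k}}(z)$ is $n!$ times the coefficient of $x^n$ in $B(z,x)^2$.

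Next I would identify the right-hand side in the same way. Since $A(z,x) = \sum_{m \geq 0} E_{\arr_m}(z)\,\frac{x^m}{m!}$, differentiating in $x$ gives $\partial_x A(z,x) = \sum_{n \geq 0} E_{\arr_{n+1}}(z)\,\frac{x^n}{n!}$, and substituting $x \mapsto 2x$ yields $(\partial_x A)(z,2x) = \sum_{n \geq 0} 2^n E_{\arr_{n+1}}(z)\,\frac{x^n}{n!}$. Hence $2^n E_{\arr_{n+1}}(z)$ is $n!$ times the coefficient of $x^n$ in $(\partial_x A)(z,2x)$.

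Finally I would invoke the identity $\partial_x A(z,x) = e^{x(z-1)} A(z,x)^2$ established just before the statement. Replacing $x$ by $2x$ gives $(\partial_x A)(z,2x) = e^{2x(z-1)} A(z,2x)^2 = \big( e^{x(z-1)} A(z,2x) \big)^2 = B(z,x)^2$. Comparing this with the first two steps, the two power series in $x$ agree coefficientwise, which is exactly the claimed identity for every $n \geq 0$. The only point requiring care is the bookkeeping of the index shift and the factor $2^n$: one differentiation of $A$ produces $E_{\arr_{n+1}}$ and the rescaling $x \mapsto 2x$ supplies precisely $2^n$. There is no genuine obstacle here; the whole argument is a short manipulation of generating functions once the identity $\partial_x A = e^{x(z-1)}A^2$ is in hand.
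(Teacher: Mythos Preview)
Your proof is correct and follows essentially the same approach as the paper: the paper's proof is the one-line remark that the identity is obtained by comparing $\partial_x A(z,x) = e^{x(z-1)}A(z,x)^2$ (substituted at $2x$) with Brenti's generating function $B(z,x) = e^{x(z-1)}A(z,2x)$, and you have simply spelled out the coefficient extraction explicitly.
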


In a similar manner, we can verify the following identity between the generating functions for the type A and type B Primitive Eulerian polynomials in \cref{t:PEul-generating}:
\[
\bigg( e^{x(z-1)} A(z,2x)^{1/2} \bigg)^2 = \dfrac{\partial ( 1 + \log A )}{\partial x} (z,2x).
\]

\begin{proposition}
For all $n \geq 0$,
\[
\sum_{k=0}^n \binom{n}{k} P_{\cB_k}(z) P_{\cB_{n-k}}(z) = 2^n P_{\arr_{n+1}}(z).
\]
\end{proposition}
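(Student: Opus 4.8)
The plan is to pass to exponential generating functions and read off the identity coefficient by coefficient, which is exactly what the displayed functional identity preceding the statement is set up to do. Write
\[
	P_\cB(z,x) = \sum_{n \ge 0} P_{\barr_n}(z)\,\frac{x^n}{n!}
	\qqand
	P_\arr(z,x) = \sum_{m \ge 0} P_{\arr_m}(z)\,\frac{x^m}{m!};
\]
by \cref{t:PEul-generating} these equal $e^{x(z-1)}A(z,2x)^{1/2}$ and $1+\log A(z,x)$ respectively, with $A(z,x)$ as in \cref{eq:generating-Eul-A}. By the binomial convolution rule for exponential generating functions, the left-hand side of the claimed identity is the coefficient of $x^n/n!$ in $P_\cB(z,x)^2 = e^{2x(z-1)}A(z,2x)$. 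For the right-hand side, differentiating term by term gives $\partial_x P_\arr(z,x) = \sum_{n \ge 0} P_{\arr_{n+1}}(z)\,x^n/n!$, and substituting $x \mapsto 2x$ turns this into $\sum_{n\ge 0} 2^n P_{\arr_{n+1}}(z)\,x^n/n!$; so $2^n P_{\arr_{n+1}}(z)$ is the coefficient of $x^n/n!$ in $\big(\partial_x(1+\log A)\big)(z,2x)$.

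It therefore suffices to establish the functional identity
\[
	e^{2x(z-1)}A(z,2x) \;=\; \big(\partial_x(1+\log A)\big)(z,2x),
\]
which is the displayed equation just before the proposition. This in turn follows from the derivative formula $\partial_x A(z,x) = e^{x(z-1)}A(z,x)^2$ recalled above: dividing by $A(z,x)$ gives $\partial_x \log A(z,x) = e^{x(z-1)}A(z,x)$, and replacing $x$ by $2x$ yields $\big(\partial_x(1+\log A)\big)(z,2x) = e^{2x(z-1)}A(z,2x) = \big(e^{x(z-1)}A(z,2x)^{1/2}\big)^2 = P_\cB(z,x)^2$. Comparing coefficients of $x^n/n!$ on the two sides now gives the stated identity.

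There is no serious obstacle here; the proof is a short generating-function manipulation. The only points needing a little care are the index shift — differentiating the type A generating function produces $P_{\arr_{n+1}}(z)$, matching the $n+1$ on the right-hand side — and the compatibility of the conventions $P_{\arr_0}(z)=P_{\arr_1}(z)=1$ with the constant and linear terms of $1+\log A(z,x)$, both of which hold because $A(z,0)=1$. One could instead give a recursive/combinatorial proof in the spirit of the proof of \cref{t:PEulB-recursion}, interpreting both sides through counts of regions of $\arr_{n+1}$, but the generating-function argument above is the most economical route.
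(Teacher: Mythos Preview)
Your proof is correct and follows exactly the paper's approach: verify the generating-function identity $\big(e^{x(z-1)}A(z,2x)^{1/2}\big)^2 = \big(\partial_x(1+\log A)\big)(z,2x)$ from the derivative formula $\partial_x A = e^{x(z-1)}A^2$, then extract the coefficient of $x^n/n!$ on both sides. The paper merely asserts that the functional identity can be verified, while you have spelled out the short computation and the coefficient extraction; there is no substantive difference.
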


In view of \cref{t:PE-A-des}, \cref{p:PEulB-1_2Eul}, and the symmetry of the coefficients of the Eulerian polynomial, this formula is equivalent to a result of Ma and Yeh \citep[Proposition 1]{my17EulStir}.

\section{The type D Primitive Eulerian polynomial}\label{ss:PEul-D}

The \defn{type D Coxeter arrangement} $\cD_n$ in $\RR^n$ consists of the hyperplanes with equations $x_i = x_j$ and $x_i = - x_j$ for all $1 \leq i < j \leq n$.
It is a subarrangement of $\barr_n$, and contains the braid arrangement $\arr_n$.
\[
\begin{gathered}
\begin{tikzpicture}[<->]
\node [] at (-2.2,1.3) {$\cD_2$};
\draw (45+180:1.5) -- (45:1.5) node [above right] {\small$x_1 = x_2$};
\draw (-45+180:1.5) -- (-45:1.5) node [below right] {\small$x_1 = -x_2$};
\end{tikzpicture}
\end{gathered}
\hspace*{.2\linewidth}
\begin{gathered}
\begin{tikzpicture}
\node [] at (0,1.3) {$\cD_3$};
\node [] at (0,-1.3) {$\phantom{\cD_3}$};
\end{tikzpicture}
\includegraphics[scale=.12]{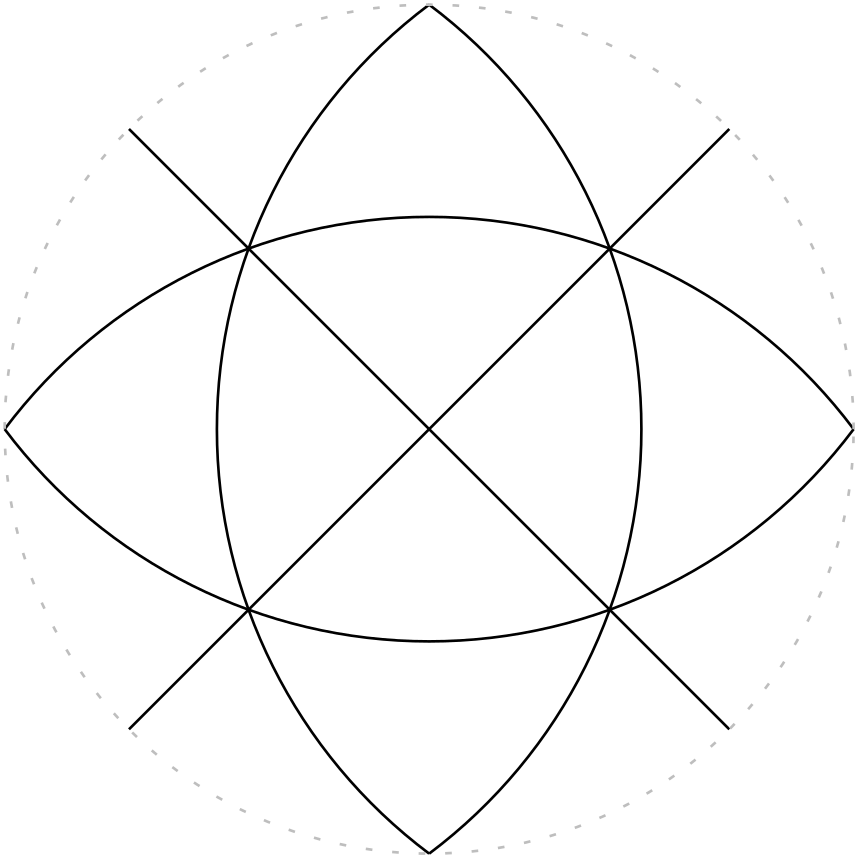}
\end{gathered}
\]
It is the reflection arrangement corresponding to the group of \defn{even signed permutations} $\frakD_n$, the Coxeter group of type ${\rm D}_n$.
$\frakD_n$ is the subgroup of $\frakB_n$ consisting of those signed permutations $w$ such that $\fneg(w)$ is even.
The descent statistic in $\frakD_n$ has the following combinatorial interpretation:
\begin{align*}
\des_D(w) & = \# \{ i \in [0,n-1] \,:\, w(i) > w(i+1)\} & \text{where } w(0) := -w(2).
\end{align*}

Unlike the type A and B cases, no combinatorial interpretation of the coefficients of $P_{\darr_n}(z)$ was given in \citep{bastidas20polytope}.
The obstacle being that no known excedance-like statistic on $\frakD_n$ has the right distribution on cuspidal elements.
In fact, none of the excedance statistics in \cref{ss:PEul-B} restricts to an Eulerian statistic on $\frakD_n$.
Another excedance-like statistic on $\frakD_n$, the \emph{flag weak excedance of type D} ${\rm fwex}_D$, was considered by Cho and Park \citep{cp18tqEulBD}, but this statistic is not Eulerian.

Using \cref{t:sharp-distr}, we will for the first time interpret the coefficients of $P_{\darr_n}(z)$ in combinatorial terms.
We identify the elements of $\frakD_n$ and the regions of the type B Coxeter arrangement $\darr_n$ as follows:
\[
w = w_1 w_2 \dots w_n \in \frakD_n
\quad
\longleftrightarrow
\quad
C_w := \{ x \in \RR^n \,:\, |x_{w_1}| \leq x_{w_2} \leq \dots \leq x_{w_n} \} \in \cham[\darr_n].
\]
Note that the region of $\darr_n$ associated to $w_1 w_2 \dots w_n \in \frakD_n$ is the union of the regions of $\barr_n$ corresponding to $w_1 w_2 \dots w_n \in \frakB_n$ and $\opp{w_1} w_2 \dots w_n \in \frakB_n$.
The weak order of $\cham[\darr_n]$ with respect to the region $C_e$ coincides with the weak order of $\frakD_n$ as a Coxeter group. In particular, $\des_D(w) = \des_\preceq(C_w)$.

Let $\sfh = \sfh_v^-$ for $v = (1,2,4,\dots,2^{n-1})$ be the halfspace we considered in the previous section.
Then, $C_w \subseteq \sfh$ if and only if all the right-to-left maxima of $|w| := |w_1| |w_2| \dots |w_n| \in \frakS_n$ are negative in $w$ and $|w_1| \neq n$.
See \citep[Proposition 8.3]{bw04geombases}.
We let $BW^D_n$ denote the collection of such elements.
The vector $v$ is very generic for $\cB_n$, and therefore it is very generic for any of its (essential) subarrangements; in  particular it is very generic for $\cD_n$. 
Since $v \in C_e$, the following is a direct consequence of \cref{t:sharp-distr}.

\begin{theorem}\label{t:PE-D-des}
For all $n \geq 2$,
\[
	P_{\cD_n}(z) = \sum_{w \in BW^D_n} z^{\des(w)}.
\]
\end{theorem}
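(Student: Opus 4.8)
The plan is to specialize \cref{t:sharp-distr} to $\arr = \cD_n$, using the same halfspace as in the type B case: $\sfh_v^-$ with $v = (1,2,4,\dots,2^{n-1}) \in \RR^n$. First I would check the hypotheses. Since $\cD_n$ is a reflection arrangement it is sharp, so only very-genericity of $v$ with respect to $\cD_n$ needs justification, and this is inherited from the type B case: for $n \geq 2$ the arrangement $\cD_n$ is essential with minimal flat $\{0\}$, exactly like $\cB_n$, and since $\cD_n \subseteq \cB_n$ every flat of $\cD_n$ is also a flat of $\cB_n$. Hence the hyperplane $\{x \in \RR^n : \langle v,x\rangle = 0\}$, which contains no flat of $\cB_n$ besides $\{0\}$, contains no flat of $\cD_n$ besides $\{0\}$; so $\sfh_v^-$ is generic with respect to $\cD_n$. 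Likewise $v$ lies on no hyperplane of $\cB_n$, hence on none of $\cD_n$. Thus $v$ is very generic with respect to $\cD_n$, and it lies in the fundamental region $C_e$, so $B(v) = C_e$.

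With the hypotheses verified, \cref{t:sharp-distr} yields
\[
P_{\cD_n}(z) = \sum_{C \subseteq \sfh_v^-} z^{\des_{\preceq_{C_e}}(C)},
\]
the sum over regions $C \in \cham[\cD_n]$ contained in $\sfh_v^-$. I would then rephrase the right-hand side using even signed permutations. Under the identification $w \leftrightarrow C_w$, the weak order $\preceq_{C_e}$ on $\cham[\cD_n]$ is the weak order of $\frakD_n$ as a Coxeter group, so $\des_{\preceq_{C_e}}(C_w) = \des_D(w)$. It then remains only to determine, for $w \in \frakD_n$, when $C_w \subseteq \sfh_v^-$; this is exactly the Bj\"orner--Wachs computation \citep[Proposition 8.3]{bw04geombases}: $C_w \subseteq \sfh_v^-$ if and only if every right-to-left maximum of $|w| = |w_1|\,|w_2|\cdots|w_n| \in \frakS_n$ is negative in $w$ and $|w_1| \neq n$, that is, if and only if $w \in BW^D_n$. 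Substituting proves the identity.

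Thus, granted \cref{t:sharp-distr} and \citep[Proposition 8.3]{bw04geombases}, the proof is essentially bookkeeping; the only point needing a genuine (if short) argument is transferring very-genericity of $v$ from $\cB_n$ to its subarrangement $\cD_n$, which rests on $\flat[\cD_n] \subseteq \flat[\cB_n]$ and on the essentiality of $\cD_n$ for $n \geq 2$ --- the latter being precisely why the statement is restricted to $n \geq 2$, consistent with the convention $P_{\cD_1}(z) := 0$. As a consistency check, for $n = 2$ one gets $BW^D_2 = \{\opp{1}\,\opp{2}\}$ with $\des_D(\opp{1}\,\opp{2}) = 2$, recovering $P_{\cD_2}(z) = z^2$.
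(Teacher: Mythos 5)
Your proposal is correct and follows essentially the same route as the paper: choose $v=(1,2,\dots,2^{n-1})$, note that very-genericity for $\cD_n$ is inherited from $\cB_n$ via essentiality and inclusion of flats, invoke Bj\"orner--Wachs \citep[Proposition~8.3]{bw04geombases} to identify $\{w : C_w\subseteq\sfh_v^-\}$ with $BW^D_n$, and apply \cref{t:sharp-distr}. The only difference is that you spell out the inheritance argument, which the paper states in a single sentence.
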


\begin{figure}[ht]
\includegraphics[height=.2\textheight]{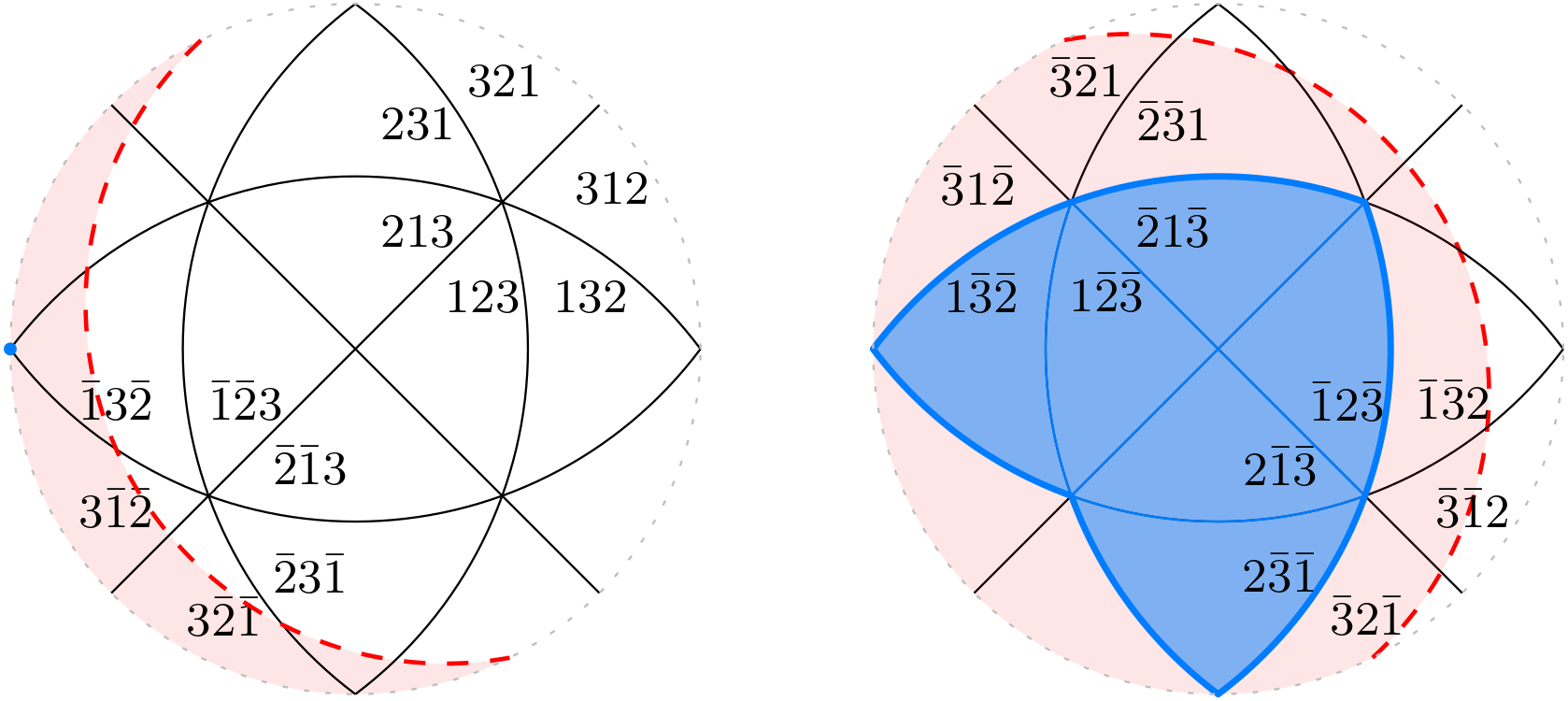}
\caption{Regions corresponding to elements in $BW^D_3$.}
\label{f:BWD3}
\end{figure}

\begin{example}
The following tables show the distribution of $\des_D$ on $BW^D_3$.
\begin{center}
{\renewcommand{\arraystretch}{1.1}
\begin{tabular}{c|c}
$w$ & $\des_D(w)$ \\
\hline
$1 \, \opp{3} \, \opp{2}$ & $3$ \\
$1 \, \opp{2} \, \opp{3}$ & $2$ \\
$\opp{2} \, 1 \, \opp{3}$ & $2$
\end{tabular}
\qquad\qquad
\begin{tabular}{c|c}
$w$ & $\des_D(w)$ \\
\hline
$2 \, \opp{1} \, \opp{3}$ & $2$ \\
$2 \, \opp{3} \, \opp{1}$ & $2$ \\
$\opp{1} \, 2 \, \opp{3}$ & $1$
\end{tabular}}
\end{center}
Thus, $P_{\darr_3}(z) = z^3 + 4 z^2 + z$.
\end{example}

Note that $P_{\darr_3}(z)$ is equal to the Primitive Eulerian polynomial of the braid arrangement $\arr_4$.
This is not surprising since $\darr_3$ is isomorphic to (the essentialization of) $\arr_4$.
However, the combinatorics of the complexes $BW^A_4$ and $BW^D_3$ (\cref{f:BWA4,f:BWD3}) are very different.
Indeed, the complex $BW^A_n$ is always a \emph{top-star}, meaning that it consists of all the regions containing a fixed face, in this case the face corresponding to the set composition $([n-1],\{n\})$. This is not true for the complex $BW^D_3$.

The following table shows the Primitive Eulerian polynomials of type D for the first values of $n$.

\begin{table}[ht]
\begin{tabular}{c||l}
$n$ & $P_{\darr_n}(z)$ \\
\hline\hline
$2$ & $z^2$ \\
$3$ & $z^3 + 4 z^2 + z$ \\
$4$ & $z^4 + 20 z^3 + 20 z^2 + 4 z$ \\
$5$ & $z^5 + 76 z^4 + 216 z^3 + 116 z^2 + 11 z$ \\
$6$ & $z^6 + 262 z^5 + 1732 z^4 + 2072 z^3 + 632 z^2 + 26 z$ \\
$7$ & $z^7 + 862 z^6 + 11824 z^5 + 28064 z^4 + 18404 z^3 + 3158 z^2 + 57 z$
\end{tabular}
\caption{The first Primitive Eulerian polynomials of type D.}
\end{table}

We now proceed to establish a quadratic recursion for the polynomials $P_{\darr_n}(z)$ in terms of the Primitive Eulerian polynomial of reflection arrangements of lower rank.
A key difference with the type A and B cases is that the arrangement under a hyperplane of $\darr_n$ is not (combinatorially isomorphic to) a reflection arrangement (i.e. $\darr_n$ it is not a \emph{good reflection arrangement} in the language of Aguiar and Mahajan \citep[Section 5.7]{am17}).

\begin{theorem}\label{t:PEulD-quad-rec}
The type D Primitive Eulerian polynomials satisfy the following recursion.
With $P_{\cD_0}(z) = 1$ and $P_{\cD_1}(z) = 0$,
\begin{align*}
& P_{\darr_n}(z) = (z-1)^2P_{\barr_{n-2}}(z) + \\
& \sum_{k=0}^{n-2} \binom{n-2}{k} 2^k 
								\Big( (z-1)P_{\cD_{n-2-k}}(z) P_{\arr_{k+1}}(z) + 2P_{\cD_{n-1-k}}(z) P_{\arr_{k+1}}(z) + P_{\cD_{n-2-k}}(z) P_{\arr_{k+2}}(z) \Big),
\end{align*}
for all $n \geq 2$.
\end{theorem}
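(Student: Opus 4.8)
The plan is to apply the recursion of \cref{p:PEul-recursion} twice, peeling off the last two coordinates of $\darr_n$. Take $\rmH\in\darr_n$ to be the hyperplane $\{x_{n-1}=x_n\}$. Intersecting each of the other hyperplanes of $\darr_n$ with $\rmH$, one sees that in coordinates $x_1,\dots,x_{n-1}$ (with $x_{n-1}$ the direction along which the $(n-1)$-st and $n$-th coordinates of $\RR^n$ get identified) the restriction $\darr_n^{\rmH}$ is the arrangement $\cG$ obtained from $\darr_{n-1}$ by adjoining the single coordinate hyperplane $\{x_{n-1}=0\}$; this is the failure of $\darr_n$ to be a good reflection arrangement mentioned before the statement. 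A second such check gives $\cG^{\{x_{n-1}=0\}}\cong\barr_{n-2}$, so \cref{p:PEul-recursion} applied to $\cG$ with the hyperplane $\{x_{n-1}=0\}$ reads
\[
P_{\cG}(z)=(z-1)P_{\barr_{n-2}}(z)+\sum_{\rmL'}P_{\cG_{\rmL'}}(z),
\]
the sum over rank-$1$ flats $\rmL'$ of $\cG$ not contained in $\{x_{n-1}=0\}$. Substituting this into the identity $P_{\darr_n}(z)=(z-1)P_{\cG}(z)+\sum_{\rmL}P_{(\darr_n)_{\rmL}}(z)$ coming from \cref{p:PEul-recursion} for $\darr_n$ and $\rmH$, we get
\[
P_{\darr_n}(z)=(z-1)^{2}P_{\barr_{n-2}}(z)+(z-1)\sum_{\rmL'}P_{\cG_{\rmL'}}(z)+\sum_{\rmL}P_{(\darr_n)_{\rmL}}(z),
\]
and it remains only to evaluate the two sums of localizations.

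For this I would use the description of $\flat[\darr_n]$ by type $D$ set partitions of $[\pm n]$, namely type $B$ partitions $\rmX=\{S_0,S_1,\opp{S_1},\dots,S_k,\opp{S_k}\}$ with $|S_0|\neq 2$, for which $(\darr_n)_{\rmX}\cong\darr_{|S_0|/2}\times\arr_{|S_1|}\times\cdots\times\arr_{|S_k|}$ (the type $D$ analogue of the description recalled in \cref{ss:PEul-B}, the zero block now contributing a type $D$ rather than a type $B$ factor). A rank-$1$ flat of $\darr_n$ has a single pair of nonzero blocks $\{S_1,\opp{S_1}\}$, and it fails to be contained in $\rmH=\{x_{n-1}=x_n\}$ exactly when $n-1$ and $n$ do not lie in the same block. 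Up to the relabelling $S_1\leftrightarrow\opp{S_1}$ these flats fall into three disjoint families: (A) $n-1\in S_0$ and $n\in S_1$; (B) $n\in S_0$ and $n-1\in S_1$; (C) $n\in S_1$ and $-(n-1)\in S_1$ (so neither lies in $S_0$). In each family the remaining elements of $S_1$ form an arbitrary involution-exclusive subset of $[\pm(n-2)]$, so writing $|S_1|=k+1$ in (A) and (B), and $|S_1|=k+2$ in (C), a family with this parameter contributes $\binom{n-2}{k}2^{k}$ flats, with localizations $\darr_{n-1-k}\times\arr_{k+1}$ in (A) and (B) and $\darr_{n-2-k}\times\arr_{k+2}$ in (C). By \cref{eq:prod-arr}, and adding the two equal contributions of (A) and (B),
\[
\sum_{\rmL}P_{(\darr_n)_{\rmL}}(z)=2\sum_{k}\binom{n-2}{k}2^{k}P_{\darr_{n-1-k}}(z)P_{\arr_{k+1}}(z)+\sum_{k}\binom{n-2}{k}2^{k}P_{\darr_{n-2-k}}(z)P_{\arr_{k+2}}(z).
\]
The analogous, simpler, count for $\cG$ — its rank-$1$ flats avoiding $\{x_{n-1}=0\}$ are exactly the rank-$1$ flats of $\darr_{n-1}$ not contained in $\{x_{n-1}=0\}$, and for these $\cG_{\rmL'}=(\darr_{n-1})_{\rmL'}$ — yields $\sum_{\rmL'}P_{\cG_{\rmL'}}(z)=\sum_{k}\binom{n-2}{k}2^{k}P_{\darr_{n-2-k}}(z)P_{\arr_{k+1}}(z)$. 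Substituting these two displays into the previous one gives exactly the asserted recursion.

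The one delicate point — and the reason the convention $P_{\cD_1}(z):=0$ appears in the statement — is the constraint $|S_0|\neq 2$: in families (A) and (B) the value $k=n-2$ does not actually occur, and in family (C) and in the count for $\cG$ the value $k=n-3$ does not occur, the omitted terms being precisely those whose localization factor is $\darr_1\times\arr_{\bullet}$. Setting $P_{\darr_1}(z)=0$ annihilates these phantom terms and lets every sum above run cleanly over $0\le k\le n-2$; I would spell this reconciliation out. I also expect to verify the base case $n=2$ by hand — there $\cG$ has rank $1$, each sum collapses to a single term, and the right-hand side returns $P_{\darr_2}(z)=z^2$, which simultaneously pins down $P_{\darr_0}(z)=1=P_{\arr_1}(z)$. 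The main obstacle is the bookkeeping of the second paragraph: getting the three families (A)--(C) exactly right with no omissions or double counts, identifying the corresponding localizations, and matching the resulting index ranges against the phantom $\darr_1$ terms.
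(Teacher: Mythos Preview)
Your proof is correct and follows essentially the same approach as the paper's: apply \cref{p:PEul-recursion} twice, first with a diagonal hyperplane and then with the lone coordinate hyperplane in the resulting $\cD_{n-1,1}$, counting the relevant rank-$1$ flats via type~D partitions and absorbing the forbidden $|S_0|=2$ cases into the convention $P_{\cD_1}(z)=0$. The only differences are cosmetic --- you use indices $n-1,n$ where the paper uses $1,2$, and you phrase the second step directly in terms of $\cG=\cD_{n-1,1}$ rather than as an intersection with $\{x_1=-x_2\}$ --- and your treatment of the phantom terms is in fact slightly more explicit than the paper's.
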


We are not aware of a similar recursion for the type D Eulerian polynomials $E_{\darr_n}(z)$.
Since $\cD_n$ is a subarrangement of $\cB_n$, the flats of $\cD_n$ are also flats of $\cB_n$.
Specifically, the flats of $\cB_n$ that are flats of $\cD_n$ are those corresponding to type B partitions of $[\pm n]$ such that the zero block $S_0$ does not have cardinality $2$; see for instance Mahajan's thesis \citep[Appendix B]{mahajan02thesis}.
If the flat $\rmX \in \flat[\cD_n]$ corresponds to the partition $\{S_0 , S_1 , \opp{S_1} ,\dots,S_k,\opp{S_k}\}$, then
\[
(\darr_n)^\rmX \cong \begin{cases}
\darr_{k,r} & \text{if } |S_0| = 0, \\
\barr_k & \text{if } |S_0| \neq 0,
\end{cases}
\qqand
(\darr_n)_\rmX \cong \darr_{|S_0|/2} \times \arr_{|S_1|} \times \dots \times \arr_{|S_k|},
\]
where $\darr_{k,r}$ is the arrangement obtained by adding the first $r$ coordinate hyperplanes to $\darr_k$ and $r$ is the number of blocks $S_i$ with $|S_i| > 1$; not counting $S_i$ and $\opp{S_i}$ twice.
We will compute the Primitive Eulerian polynomial of the arrangements $\cD_{k,r}$ in \cref{ss:betweenDB}.

\begin{proof}[Proof of \cref{t:PEulD-quad-rec}]
Let $n \geq 2$ and $\rmH \in \darr_n$ be the hyperplane $x_1 = x_2$.
Then,
\[
\darr_n^\rmH \cong \cD_{n-1,1} = \cD_{n-1} \cup \big\{ \{ x \in \RR^{n-1} \,:\, x_1 = 0 \} \big\}.
\]
The lines not in $\rmH$ correspond to partitions $\{S_0,S_1,\opp{S_1}\}$ such that $1,2$ are not in the same block.
If~$\rmL$ corresponds to such partition, then
\[
(\darr_n)_{\rmL} \cong \cD_{|S_0|/2} \times \arr_{|S_1|}.
\]
These partitions come in three flavors:
\begin{center}
\hfill $1 \in S_0$ and $2 \in S_1$, or
\hfill $2 \in S_0$ and $1 \in S_1$, or
\hfill $1,\opp{2} \in S_1$.
\hfill \phantom{.}
\end{center}
It follows from \cref{p:PEul-recursion} that
\[
P_{\darr_n}(z) = (z-1)P_{\darr_n^\rmH}(z) + \sum P_{\cD_{n-1-k}}(z)P_{\arr_{k+1}}(z) + \sum P_{\cD_{n-2-k}}(z)P_{\arr_{k+2}}(z),
\]
where the first sum is over partitions of the first two kinds and $|S_1| = k + 1$,
and the second sum is over partitions of the third kind and $|S_1| = k + 2$.
Such partitions are completely determined by the set $S_1 \setminus \{2\}$, $S_1 \setminus \{1\}$, or $S_1 \setminus \{1,\opp{2}\}$, respectively; which can be any involution-exclusive $k$ subset of $[\pm n] \setminus [\pm 2]$ (as long as $|S_0| \neq 2$).
That is,
\[
P_{\darr_n}(z) = (z-1)P_{\darr_n^\rmH}(z) + \sum_{k=0}^{n-2} \binom{n-2}{k} 2^k \Big( 2 P_{\cD_{n-1-k}}(z)P_{\arr_{k+1}}(z) + \sum P_{\cD_{n-2-k}}(z)P_{\arr_{k+2}}(z) \Big).
\]
Observe that by setting $P_{\cD_1}(z) = 0$, we have implicitly taken care of the cases where $\{S_0,S_1,\opp{S_1}\}$ is not a partition of type D.
We now proceed to express $P_{\darr_n^\rmH}(z)$ in terms of Primitive Eulerian polynomials for reflection arrangements of lower rank.

Let $\rmH' \in \darr_n^\rmH$ be the hyperplane obtained by intersecting $x_1 = - x_2$ with $\rmH$.
Then,
\[
(\darr_n^\rmH)^{\rmH'} = \darr_n^{\rmH \cap \rmH'} \cong \cB_{n-2}.
\]
The lines contained in $\rmH$ and not contained in $\rmH'$ correspond to partitions where $1,2$ are in the same block but $1,\opp{2}$ are not. That is, partitions of the form $\{S_0,S_1,\opp{S_1}\}$ with $1,2 \in S_1$. 
In this case,
\[
(\darr_n)_\rmL^\rmH \cong  \cD_{|S_0|/2} \times \arr_{|S_1|-1}.
\]
Again, such partition is completely determined by $S_1 \setminus \{1,2\}$, so
\[
P_{\darr_n^\rmH}(z) = (z-1)P_{\barr_{n-2}}(z) + \sum_{k=0}^{n-2} \binom{n-2}{k} 2^k P_{\cD_{n-2-k}}(z)P_{\arr_{k+1}}(z).
\]
The result follows by substituting this into the expression for $P_{\darr_n}(z)$ above.
\end{proof}

\subsection{Generating function}

This section completes the proof of \cref{t:PEul-generating}.
We employ the following type B analog of the compositional formula.

\begin{proposition}\label{p:typeB-comp}
Let
\[
f(x) = \sum_{n \geq 0} f_n \dfrac{x^n}{n!} \qquad
g(x) = 1 + \sum_{n \geq 1} g_n \dfrac{x^n}{n!} \qquad
a(x) = \sum_{n \geq 1} a_n \dfrac{x^n}{n!}.
\]
If
\[
h(x) = \sum_{n \geq 0} h_n \dfrac{x^n}{n!}
\quad \text{where} \quad
h_n = \sum_{\{ S_0 , S_1,\opp{S_1}, \dots , S_k , \opp{S_k} \} \vdash^B [\pm n]} f_{|S_0|/2} g_k a_{|S_1|} \dots a_{|S_k|},
\]
then
\[
h(x) = f(x)g\big(\tfrac{a(2x)}{2}\big).
\]
\end{proposition}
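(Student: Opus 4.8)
The plan is to reduce this to the classical (type A) compositional formula, by first peeling off the zero block and then re-encoding the remaining data (a type B set partition with empty zero block) as an ordinary set partition carrying a sign decoration. First I would unpack the sum defining $h_n$. A type B set partition $\{S_0,S_1,\opp{S_1},\dots,S_k,\opp{S_k}\}\vdash^B[\pm n]$ is recorded uniquely by the set $Z := S_0\cap[n]$ of absolute values occurring in the (symmetric) zero block --- so that $S_0 = Z\cup\opp{Z}$ and $|S_0|/2 = |Z|$ --- together with a type B set partition of $[\pm([n]\setminus Z)]$ having empty zero block. Writing $m=|Z|$, $\ell = n-m$, and identifying $[n]\setminus Z$ with $[\ell]$, this gives
\[
h_n = \sum_{m=0}^{n}\binom{n}{m}\, f_m\, c_{n-m},
\qquad\text{where}\qquad
c_{\ell} := \sum g_k\, a_{|S_1|}\cdots a_{|S_k|},
\]
the inner sum running over all zero-block-free type B set partitions $\{S_1,\opp{S_1},\dots,S_k,\opp{S_k}\}$ of $[\pm\ell]$. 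By the product rule for exponential generating functions, $h(x) = f(x)\,c(x)$ where $c(x) := \sum_{\ell\geq 0} c_{\ell}\, x^\ell/\ell!$, so it only remains to identify $c(x) = g\big(\tfrac{1}{2}a(2x)\big)$.

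Second I would compute $c(x)$ via the classical compositional formula. A zero-block-free type B set partition of $[\pm\ell]$ projects to an ordinary set partition $\{B_1,\dots,B_k\}$ of $[\ell]$ via $B_i := \{\,|j| : j\in S_i\,\}$, and I would check that the fiber over a fixed $B_i$ has exactly $2^{|B_i|-1}$ elements: an involution-exclusive subset $S_i$ with underlying index set $B_i$ is the same as a sign assignment on $B_i$ (that is, $2^{|B_i|}$ of them), and the fixed-point-free involution $S_i\mapsto\opp{S_i}$ (fixed-point-free since $B_i\neq\emptyset$) pairs these up. Hence
\[
c_{\ell} = \sum_{\{B_1,\dots,B_k\}\vdash[\ell]}\, g_k\,\prod_{i=1}^{k} 2^{|B_i|-1} a_{|B_i|},
\]
which is precisely the compositional formula applied to $g$ (with $g(0)=1$) and the series $\tilde a(x) := \sum_{\ell\geq 1} 2^{\ell-1}a_\ell\, x^\ell/\ell! = \tfrac{1}{2}a(2x)$ (with $\tilde a(0)=0$). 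Therefore $c(x) = g(\tilde a(x)) = g\big(\tfrac{1}{2}a(2x)\big)$, and combining with the first step gives $h(x) = f(x)\,g\big(\tfrac{1}{2}a(2x)\big)$.

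The one delicate point is the bookkeeping in the second step: one must obtain the factor $2^{|B_i|-1}$ rather than $2^{|B_i|}$, because the pair $\{S_i,\opp{S_i}\}$ is unordered, and one must observe that the unorderedness of the collection of pairs in a type B set partition matches the unorderedness of the blocks of the associated ordinary partition, so that no further symmetry factor appears when the compositional formula is invoked. Everything else is routine exponential-generating-function manipulation. Alternatively, the second step can be stated in the language of species: zero-block-free type B set partitions form the composite $E\circ\mathcal{T}$, where $\mathcal{T}$ is the species of nonempty sets equipped with a sign assignment taken modulo global flip and weighted by $a$, whose exponential generating function is $\tfrac{1}{2}a(2x)$; but I would keep the elementary formulation above.
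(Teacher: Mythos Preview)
Your argument is correct. The paper does not actually prove this proposition; it simply records it as an adaptation (to exponential generating functions, and allowing $f_0\neq 1$) of \citep[Proposition~6.3]{bastidas20polytope}, so there is no in-paper proof to compare against. Your two-step reduction---first splitting off the zero block to get $h(x)=f(x)c(x)$, then identifying $c(x)=g\big(\tfrac{1}{2}a(2x)\big)$ by projecting each pair $\{S_i,\opp{S_i}\}$ to its underlying unsigned block and counting the $2^{|B_i|-1}$ sign patterns modulo the global flip---is exactly the natural proof one would expect, and the bookkeeping (unordered pairs of blocks matching unordered blocks of the projected partition, the halving from $S_i\neq\opp{S_i}$) is handled correctly.
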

Notice that we have adapted \citep[Proposition 6.3]{bastidas20polytope} for exponential generating functions, and to allow $f_0$, and therefore $h_0 = f_0$, to be different to 1.

\begin{proof}[Proof of \cref{t:PEul-generating}]
Recall the identification between the flats of the arrangement $\darr_n$ and the type D partitions of $[\pm n]$ from the previous section.
Let $\rmX \in \flat[\cD_n]$ correspond to the partition $\{ S_0 , S_1,\opp{S_1}, \dots , S_k , \opp{S_k} \}$. Then,
\[
\mu(\bot,\rmX) = \begin{cases}
(-1)^k (2k-3)!!(k+r-1) & \text{if } |S_0| = 0, \\
(-1)^k (2k-1)!! & \text{if } |S_0| \neq 0,
\end{cases}
\]
where $r$ denotes the number of pairs of blocks that are \underline{not singletons}, and $(-1)!! := 1$.
See for instance \citep[Example 2.3]{jt84free}.
Observe that in the first case, we necessarily have $k \geq 1$.

We introduce the auxiliary polynomials
\[
\Psi^{0}_{\cD_n}(z) = \sum (2k-3)!!(k+r-1) z^k
\qquad\qquad
\Psi^{>0}_{\cD_n}(z) = \sum (2k-1)!! z^k,
\]
where the first sum is over partitions $\{ S_0 , S_1,\opp{S_1}, \dots , S_k , \opp{S_k} \} \vdash^D [\pm n]$ with $|S_0| = 0$ and the second is over partitions with $|S_0| \neq 0$.
Observe that $\Psi_{\cD_n}(z) = \Psi^{0}_{\cD_n}(z) + \Psi^{>0}_{\cD_n}(z)$.
Now consider the bivariate polynomial
\[
\Phi_n(y,z) = \sum (2k-3)!! y^{r} (yz)^k,
\]
where the sum is over the same partitions defining $\Psi^{0}_{\cD_n}(z)$. Then,
\[
\dfrac{\partial}{\partial y} \dfrac{\Phi_n(y,z)}{y} \bigg|_{y=1} = \Psi^{0}_{\cD_n}(z).
\]
Using \cref{p:typeB-comp} with
\begin{align*}
f(x) = 1,\qquad
g(x) & = 1 + \sum_{n \geq 1}(2d-3)!! \dfrac{x^n}{n!} = 2 - (1-2x)^{1/2}, \quad \text{and}\\
a(y,z,x) & = yzx + \sum_{n \geq 2} y^2z \dfrac{x^n}{n!} = yzx + y^2z(e^x-x-1),
\end{align*}
we obtain that
\[
1 + yzx + \sum_{n \geq 2} \Phi_n(y,z) \dfrac{x^n}{n!} = 2 - \big( 1 - 2yzx - y^2z(e^{2x}-2x-1) \big)^{1/2}.
\]
Moving the constant term and dividing by $y$
\[
- \dfrac{1}{y} + zx + \sum_{n \geq 2} \dfrac{\Phi_n(y,z)}{y} \dfrac{x^n}{n!} = - \bigg( \dfrac{1}{y^2} - \dfrac{2zx}{y} - z(e^{2x}-2x-1) \bigg)^{1/2}.
\]
and differentiating with respect to $y$ and setting $y = 1$
\begin{equation}\label{eq:genD-aux1}
1 + \sum_{n \geq 2} \Psi^0_{D_n}(z) \dfrac{x^n}{n!} = \big( 1 - zx \big)\big( 1 - z(e^{2x}-1) \big)^{-1/2}.
\end{equation}
Recall that if $|S_0| \neq 0$, then $|S_0|/2 \geq 2$.
Therefore, using \cref{p:typeB-comp} with
\begin{align*}
f(x) = \sum_{n \geq 2}1 \dfrac{x^n}{n!} = e^x - x - 1,\qquad
g(x) & = 1 + \sum_{n \geq 1}(2n-1)!! \dfrac{x^n}{n!} = (1-2x)^{-1/2}, \quad \text{and}\\
a(z,x) & = \sum_{n \geq 1} z \dfrac{x^n}{n!} = z(e^x-1),
\end{align*}
we have
\begin{equation}\label{eq:genD-aux2}
\sum_{n \geq 2} \Psi^{>0}_{D_n}(z) \dfrac{x^n}{n!} = \big( e^x - x - 1 \big)\big( 1 - z(e^{2x}-1) \big)^{-1/2}.
\end{equation}
Adding \cref{eq:genD-aux1} and \cref{eq:genD-aux2},
\[
1 + \sum_{n \geq 2} \Psi_{\cD_n}(z) \dfrac{x^n}{n!} = \big( e^x - x(1+z) \big)\big( 1 - z(e^{2x}-1) \big)^{-1/2},
\]
and
\[
1 + \sum_{n \geq 2} P_{\cD_n}(z) \dfrac{x^n}{n!} = \big( e^{x(z-1)} - zx \big)\Big( \dfrac{z-1}{z - e^{2x(z-1)}} \Big)^{1/2},
\]
as we wanted to show.
\end{proof}

\section{The Primitive Eulerian polynomial of arrangements between type B and D}\label{ss:betweenDB}

Let $\cD_{n,k}$ be the arrangement obtained from $\darr_n$ by adding $k$ coordinate hyperplanes.
Note that the isomorphism class of $\cD_{n,k}$ does not depend on the particular coordinate hyperplanes we choose,
since any two choices are equal modulo the action of $\frakS_n \subseteq \frakD_n$.

Since $\cD_{n,k}$ is a subarrangement of $\barr_n$, faces of $\cD_{n,k}$ are the union of some of the faces of $\barr_n$.
Faces of $\barr_n$ are in correspondence with type B compositions of $[\pm n]$;
these are \emph{ordered} type B partitions of the form
$(\opp{S_k},\dots,\opp{S_1},S_0,S_1,\dots,S_k)$. The corresponding face of $\barr_n$ is determined by inequalities $x_a \leq x_b$ whenever the block containing $a$ weakly precedes the block containing $b$;
in particular, the face is contained in the hyperplanes $x_a = 0$ for all $a \in S_0$ and in the hyperplanes $x_a = x_b$ whenever $a$ and $b$ are in the same block of the composition.

Faces of $\cD_{n,0} = \cD_n$ are described by Mahajan in \citep[Appendix B.6]{mahajan02thesis}.
They are either
\begin{itemize}
\item faces of $\barr_n$ corresponding to a compositions $(\opp{S_k},\dots,\opp{S_1},S_0,S_1,\dots,S_k)$ with $|S_0| \geq 4$,
\item faces of $\barr_n$ corresponding to a compositions $(\opp{S_k},\dots,\opp{S_1},\emptyset,S_1,\dots,S_k)$ with $|S_1| \geq 2$, or
\item the union of exactly three faces of $\barr_n$ corresponding to compositions
	\begin{itemize}
	\item $(\opp{S_k},\dots,\opp{S_2},\{\opp{a},a\},S_2,\dots,S_k)$,
	\item $(\opp{S_k},\dots,\opp{S_2},\{\opp{a}\},\emptyset,\{a\},S_2,\dots,S_k)$, and
	\item $(\opp{S_k},\dots,\opp{S_2},\{a\},\emptyset,\{\opp{a}\},S_2,\dots,S_k)$,
	\end{itemize}
	for some $a \in [n]$.
\end{itemize}
Faces of the first two types are faces of all arrangements $\cD_{n,k}$.
A face of the third type is a face of $\cD_{n,k}$ if and only if the hyperplane $x_a = 0$ is not in the arrangement $\cD_{n,k}$,
otherwise the corresponding three faces of $\barr_n$ are faces of $\cD_{n,k}$.

\begin{theorem}\label{t:PEul-D-to-B}
For all $0 \leq k \leq n$, the Primitive Eulerian polynomial of $\cD_{n,k}$ is given by the following formula.
\[
P_{\cD_{n,k}}(z) = P_{\darr_n}(z) + k z^n P_{\cB_{n-1}}(\tfrac{1}{z}).
\]
\end{theorem}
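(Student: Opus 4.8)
The plan is to compute $P_{\cD_{n,k}}(z)-P_{\darr_n}(z)$ directly from \cref{def:PEul}, by comparing the lattices of flats and the (absolute) M\"obius numbers of $\cD_{n,k}$ and $\darr_n$, and then to collapse the resulting sum with an exponential generating function identity. I use the convention $P_{\cD_1}(z):=0$ of \cref{t:PEul-generating}, so I may assume $n\geq 2$ (the cases $n\leq 1$ are trivial). Two inputs, recalled from \cref{ss:PEul-D} and the proof of \cref{t:PEul-generating}: the M\"obius number $|\mu(\bot,\rmX)|$ of a flat depends only on the restriction $\arr^\rmX$; and for a flat given by a type B partition with $m$ nonzero block pairs, $r$ of them non-singleton, one has $(\darr_n)^\rmX\cong\cD_{m,r}$ when $|S_0|=0$ and $(\darr_n)^\rmX\cong\cB_m$ when $|S_0|\neq 0$, while $|\mu_{\cD_{m,r}}(\bot,\top)|=(2m-3)!!(m+r-1)$ for every $0\leq r\leq m$ (with $(-1)!!:=1$; the case $r=m$ recovers $|\mu_{\cB_m}(\bot,\top)|=(2m-1)!!$).

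First I would describe $\flat[\cD_{n,k}]$. Since $\darr_n\subseteq\cD_{n,k}\subseteq\barr_n$, every flat is a type B partition of $[\pm n]$, and such a partition $\rmX$ with zero block $S_0$ is a flat of $\cD_{n,k}$ precisely when $|S_0|\neq 2$ or $S_0=\{a,-a\}$ for some $a\in[k]$. Thus $\flat[\cD_{n,k}]=\flat[\darr_n]\sqcup\mathcal N$, where $\mathcal N$ is the set of ``new'' flats (those with $S_0=\{a,-a\}$, $a\in[k]$). A new flat is the data of $a\in[k]$ together with a partition of the $2(n-1)$-element involution-closed set $[\pm n]\setminus\{a,-a\}$ into nonzero involution-exclusive block pairs, of which there are $S(n-1,m)\,2^{n-1-m}$ with exactly $m$ block pairs ($S$ the Stirling number of the second kind). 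For such a new flat, a short inspection of the defining equations gives $(\cD_{n,k})^\rmX\cong\cB_m$ --- every coordinate hyperplane of the restriction already arises from restricting some hyperplane $\{x_i=x_a\}$ or $\{x_i=-x_a\}$ of $\darr_n$ --- so $|\mu(\bot,\rmX)|=(2m-1)!!$ and $\codim(\rmX)=n-m$.

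Next I would track how the M\"obius numbers of the \emph{old} flats $\rmX\in\flat[\darr_n]$ change. The restriction $(\cD_{n,k})^\rmX$ differs from $(\darr_n)^\rmX$ exactly when $|S_0|=0$ and $\rmX$ has a singleton block $\{a\}$ with $a\in[k]$: each such ``small singleton'' adjoins one extra coordinate hyperplane to the restriction, turning $\cD_{m,r}$ into $\cD_{m,r+1}$ and so raising $|\mu(\bot,\rmX)|$ by $(2m-3)!!$. Expanding by linearity over the small singletons (equivalently over $a\in[k]$), the old flats contributing for a fixed $a$ are those with $|S_0|=0$ having $\{a\}$ as a block, i.e.\ partitions of $[\pm n]\setminus\{a,-a\}$ into $m-1$ nonzero block pairs, of which there are $S(n-1,m-1)\,2^{n-m}$. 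Adding the contributions of the new flats and of these corrections,
\[
 P_{\cD_{n,k}}(z)-P_{\darr_n}(z)
 =k\sum_{m\geq1}S(n-1,m)\,2^{n-1-m}(2m-1)!!\,(z-1)^{n-m}
 +k\sum_{m\geq1}S(n-1,m-1)\,2^{n-m}(2m-3)!!\,(z-1)^{n-m}.
\]
Reindexing the second sum by $j=m-1$, factoring one $(z-1)$ out of the first, and using $S(n-1,0)=0$ for $n\geq2$, both sums merge into $k\,z\,R_{n-1}(z)$, where $R_N(z):=\sum_{j=0}^{N}S(N,j)\,2^{N-j}(2j-1)!!\,(z-1)^{N-j}$.

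Finally I would prove the identity $R_N(z)=z^{N}P_{\cB_N}(\tfrac1z)$ for all $N\geq 0$; granting it, $P_{\cD_{n,k}}(z)-P_{\darr_n}(z)=k\,z\cdot z^{n-1}P_{\cB_{n-1}}(\tfrac1z)$, which is the assertion. This I would check on exponential generating functions: from $\sum_{N}S(N,j)\tfrac{x^N}{N!}=\tfrac{(e^x-1)^j}{j!}$ and $\sum_{j}\tfrac{(2j-1)!!}{j!}u^j=\sum_{j}\binom{2j}{j}(u/2)^j=(1-2u)^{-1/2}$ one gets
\[
 \sum_{N\geq0}R_N(z)\,\frac{x^N}{N!}=\Bigl(1-\frac{e^{2x(z-1)}-1}{z-1}\Bigr)^{-1/2}=A(z,2x)^{1/2},
\]
the last step a direct rearrangement of \cref{eq:generating-Eul-A}; on the other hand $z^{N}P_{\cB_N}(\tfrac1z)=A^{(2)}_N(z)$ by \cref{p:PEulB-1_2Eul}, and the exponential generating function of the $1/2$-Eulerian polynomials $A^{(2)}_N(z)$ is also $A(z,2x)^{1/2}$ (Savage--Viswanathan). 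Comparing coefficients gives the identity. The crux is the M\"obius accounting in the third paragraph --- pinning down exactly which restrictions $(\cD_{n,k})^\rmX$ change and by how much, via the formula $(2m-3)!!(m+r-1)$ for $\cD_{m,r}$, and organizing the sum so the Stirling weights line up with the coefficients of $P_{\cB_{n-1}}$. (One could instead compare the face posets --- each ``third type'' face of $\darr_n$ with label $a\leq k$ splitting into three faces of $\cD_{n,k}$ --- and pass through $\Psi$ via \cref{eq:cochar-faces-arr}, but this does not seem shorter.)
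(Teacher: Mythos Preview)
Your proof is correct, but it proceeds along a genuinely different route from the paper's.

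The paper reduces the theorem to two claims: (i) the identity at $k=n$, namely $P_{\cB_n}(z)=P_{\cD_n}(z)+n\,z^nP_{\cB_{n-1}}(1/z)$, which is checked by comparing the type~B and type~D generating functions already established in \cref{t:PEul-generating}; and (ii) the statement that the consecutive differences $P_{\cD_{n,k+1}}(z)-P_{\cD_{n,k}}(z)$ are independent of $k$, proved by a short face-counting argument via \cref{eq:cochar-faces-arr} (only the ``third type'' faces with label $a=n$ are affected when passing from $\cD_{n,k}$ to $\cD_{n,k+1}$, and which of these lie in $\sfh$ does not depend on $k$).

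You instead work directly with the flat lattice and M\"obius numbers: you identify the new flats of $\cD_{n,k}$ (those with $S_0=\{a,-a\}$, $a\in[k]$), compute their M\"obius contribution, and track exactly how the M\"obius numbers of the old flats shift using the explicit formula $|\mu_{\cD_{m,r}}(\bot,\top)|=(2m-3)!!(m+r-1)$. The linearity in $k$ falls out automatically from this bookkeeping, and the coefficient $R_{n-1}(z)$ is then identified with $z^{n-1}P_{\cB_{n-1}}(1/z)$ by a clean generating-function calculation. One advantage of your approach is that it does not invoke the type~D generating function from \cref{t:PEul-generating} --- only the type~B side (via \cref{p:PEulB-1_2Eul} or, equivalently, the change of variables in the type~B formula of \cref{t:PEul-generating}); you do however rely on the Jambu--Terao M\"obius formula for all $\cD_{m,r}$, which is the same ingredient underlying the type~D generating function. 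The paper's approach, by contrast, is lighter on M\"obius computations: claim~(ii) is a one-line symmetry, and claim~(i) leverages results already in hand. Your parenthetical remark about the face-poset alternative is in fact exactly the paper's argument for~(ii).
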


\begin{proof}
Since $\barr_n = \cD_{n,n}$, the result follows from the following two claims.
\begin{enumerate}[(i.)]
\item For all $n \geq 0$, $P_{\cB_n}(z) = P_{\darr_n}(z) + n z^n P_{\cB_{n-1}}(\tfrac{1}{z})$.
\item For fixed $n \geq 0$, the difference $P_{\cD_{n,k+1}}(z) - P_{\cD_{n,k}}(z)$ is the same for all $k = 0,\dots,n-1$.
\end{enumerate}

We prove (i.) by using the power series of \cref{t:PEul-generating}.
We first compute the generating function for $n z^n P_{\cB_{n-1}}(\tfrac{1}{z})$:
\begin{align*}
\sum_{n \geq 0} n z^n P_{\cB_{n-1}}(\tfrac{1}{z}) \dfrac{x^n}{n!} 
= zx \sum_{n \geq 1} P_{\cB_{n-1}}(\tfrac{1}{z}) \dfrac{(zx)^{n-1}}{(n-1)!} 
= zx e^{zx(\tfrac{1}{z}-1)} \Big( \dfrac{\tfrac{1}{z}-1}{\tfrac{1}{z}-e^{2zx(\tfrac{1}{z}-1)}}  \Big)^{1/2} \\
= zx e^{x(1-z)} \Big( \dfrac{1-z}{1 - ze^{2x(1-z)}}  \Big)^{1/2}
= zx A(z,2x)^{1/2} .
\end{align*}
The relation $P_{\cB_n}(z) = P_{\darr_n}(z) + n z^n P_{\cB_{n-1}}(\tfrac{1}{z})$ for all $n$ is thus equivalent to the following equality between the corresponding generating functions:
\[
e^{x(z-1)} A(z,2x)^{1/2} = \big( e^{x(z-1)} - z x \big) A(z,2x)^{1/2} + zx A(z,2x)^{1/2} .
\]

We proceed to prove (ii.)
Let $\sfh = \sfh_v^-$ for $v = (1,2,4,\dots,2^{n-1})$, a generic halfspace for all $\cD_{n,k}$.
In view of \cref{eq:cochar-faces-arr}, it suffices to show that the difference between $f$-polynomial of $\face_{\cD_{n,k+1}}(\sfh)$ and $\face_{\cD_{n,k}}(\sfh)$ is independent of $k \in [0,n-1]$.
Without loss of generality, assume
$\cD_{n,k}$ is $\cD_{n}$ with the first $k$ coordinate arrangements, and that $\cD_{n,k+1}$ is obtained by adding the hyperplane $x_n = 0$ to $\cD_{n,k}$.
By the description of the faces of $\cD_{n,k}$ above,
$\face[\cD_{n,k+1}]$ is obtained from $\face[\cD_{n,k}]$ by removing the faces of the third type with $a = n$ and adding the corresponding faces of $\barr_n$.
Which of these faces are contained in $\sfh$ (both the ones removed and added) is independent of $k$.
\end{proof}

\section{Real-rootedness}\label{s:roots}

\subsection{Real-rootedness in rank at most 3}
In this section, we employ the theory of interlacing polynomials to prove that the Primitive Eulerian polynomial of any arrangement of rank at most $3$ is real-rooted.

Let $f(z),g(z)$ be real-rooted polynomials of degree $d$ and $d-1$ respectively. The polynomial $g$ \defn{interlaces} $f$ if
\[
\alpha_1 \leq \beta_1 \leq \alpha_2 \leq \dots \leq \beta_{d-1} \leq \alpha_d,
\]
where $\alpha_1,\dots,\alpha_d$ and $\beta_1,\dots,\beta_{d-1}$ are the roots of $f$ and $g$, respectively. In this case, the polynomial $f(z)+g(z)$ is real-rooted. See for example \citep[Theorem 8]{branden04real}.

\begin{theorem}\label{t:rk3-real}
Let $\arr$ be an arrangement of rank $r \leq 3$.
Then, $P_\arr(z)$ is real-rooted.
\end{theorem}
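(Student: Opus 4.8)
The plan is to reduce to the essential case and handle ranks $1$, $2$, and $3$ separately, using the explicit computations already available plus the interlacing criterion recalled just above. For rank $1$ we have $P_\arr(z) = z$ from \cref{ex:PEul-rk1}, which is trivially real-rooted; for rank $2$, \cref{ex:PEul-rk2} gives $P_{\arr}(z) = z^2 + (k-2)z = z(z-(2-k))$ when $\arr$ has $k$ lines, again visibly real-rooted. So the whole content is the rank $3$ case, and the strategy there is to apply \cref{p:PEul-recursion}: fix a hyperplane $\rmH \in \arr$, so that
\[
	P_\arr(z) = (z-1) P_{\arr^\rmH}(z) + \sum_\rmL P_{\arr_\rmL}(z),
\]
where $\arr^\rmH$ has rank $2$ and each localization $\arr_\rmL$ has rank $1$ or $2$. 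By the rank $\leq 2$ analysis, $P_{\arr^\rmH}(z) = z(z - a)$ for some $a \leq 0$ (write $a = 2 - k_\rmH$ where $k_\rmH$ is the number of rank-$1$ flats of $\arr$ not contained in $\rmH$... more precisely $k_\rmH+1$ is the number of rank-$1$ flats of $\arr^\rmH$), and each $P_{\arr_\rmL}(z)$ is either $z$ or $z(z - b_\rmL)$ with $b_\rmL \leq 0$. Thus $\sum_\rmL P_{\arr_\rmL}(z)$ is a nonnegative combination of $z$ and polynomials $z(z-b_\rmL)$.

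The key step is to show that $(z-1)P_{\arr^\rmH}(z)$ and $\sum_\rmL P_{\arr_\rmL}(z)$ satisfy the interlacing hypothesis, so that their sum is real-rooted. The first polynomial is $z(z-1)(z-a)$ with roots $a \leq 0 \leq 1$ (using $a \le 0$), a cubic; the second is a quadratic (or linear) with nonnegative leading coefficient and constant term zero, so its roots are $0$ and some $c \geq 0$ (one checks $c \geq 0$ because $\sum_\rmL P_{\arr_\rmL}(z)$ has nonpositive coefficient of $z$ only if... actually its linear coefficient is $\sum_\rmL(-b_\rmL) \geq 0$ plus contributions $+1$ from rank-$1$ localizations, hence the quadratic $\gamma z^2 + \delta z$ has $\delta \geq 0$, giving roots $0$ and $-\delta/\gamma \leq 0$). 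Here I should be careful about signs: I expect the roots of the quadratic summand to be $0$ and a \emph{nonpositive} number, and the roots of the cubic to be $a \le 0$, $0$, $1$; I then need $a \le -\delta/\gamma \le 0 \le 0 \le 1$, i.e. that the negative root of the quadratic lies between $a$ and $0$. This is the crux and is where a genuine inequality between the combinatorial data of $\arr^\rmH$ and of the $\arr_\rmL$ must be extracted — essentially that $|a| = k_\rmH$ dominates the weighted sum $\delta/\gamma$, which should follow from double-counting incidences between $\rmH$ and the rank-$1$ flats of $\arr$.

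The main obstacle I anticipate is precisely verifying that interlacing inequality uniformly over all rank-$3$ arrangements, i.e. controlling the roots of the two summands against each other; the explicit small cases ($\arr^\rmH$ with few lines, few localizations) are trivial, but the general bound requires identifying the right combinatorial quantity (number of lines of $\arr^\rmH$ versus total ``weight'' of the rank-$1$ pieces) and showing the former is at least the latter. As a fallback, since both summands are products of at most three \emph{linear} factors with real roots confined to a bounded region ($\{0,1\}$ and the nonpositive reals), one can instead argue directly: write $P_\arr(z) = z \cdot q(z)$ where $q$ is a quadratic with $q(0) = $ (the linear coefficient of $P_\arr$) and check that $q$ has nonnegative discriminant by a sign analysis of its coefficients in terms of the Möbius function, using $\mu(\bot,\rmL) = -1$ on rank-$1$ flats and the known sign pattern of $\mu(\bot,\top)$ for rank-$3$ lattices. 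I would present whichever of these two routes gives the cleaner inequality, most likely the interlacing one since \cref{p:PEul-recursion} is already in hand and the paper has set up exactly that machinery.
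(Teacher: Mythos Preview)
Your approach is exactly the paper's: apply \cref{p:PEul-recursion} at a hyperplane $\rmH_0$, compute the roots of both summands explicitly from the rank-$2$ formula, and verify interlacing. Two clarifications close the gap you flagged. First, in an essential rank-$3$ arrangement every line $\rmL$ lies in at least two hyperplanes, so each $\arr_\rmL$ has rank exactly $2$ (no rank-$1$ contributions); writing $n_\rmL$ for the number of hyperplanes through $\rmL$ and $m$ for the number of lines not in $\rmH_0$, the second summand is $m z^2 + \big(\sum_\rmL (n_\rmL - 2)\big) z$. Second, the needed inequality is simply $n_\rmL \le k$ for every $\rmL \not\subseteq \rmH_0$, where $k$ is the number of lines of $\arr$ contained in $\rmH_0$ (equivalently, the number of hyperplanes of $\arr^{\rmH_0}$, so $a = 2-k$): the map $\rmH \mapsto \rmH \cap \rmH_0$ from hyperplanes through $\rmL$ to lines in $\rmH_0$ is injective, since $\rmH \cap \rmH_0 = \rmH' \cap \rmH_0$ would force $\rmL = \rmH \cap \rmH' \subseteq \rmH_0$. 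This gives $0 \le \tfrac{\sum n_\rmL}{m} - 2 \le k-2$, and hence the interlacing $-(k-2) \le -\big(\tfrac{\sum n_\rmL}{m} - 2\big) \le 0 \le 0 \le 1$.
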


\begin{proof}
The cases $r = 1,2$ follow from the explicit computations in \cref{ex:PEul-rk1,ex:PEul-rk2}.

Let $\arr$ be an essential arrangement of $n$ hyperplanes in $\RR^3$.
Fix a hyperplane $\rmH_0 \in \arr$, and let $k$ (resp. $m$) be the number of lines of $\arr$ contained in $\rmH_0$ (resp. not contained in $\rmH_0$).
\cref{p:PEul-recursion} reads
\[
P_\arr(z) = (z-1) P_{\arr^{\rmH_0}}(z) + \sum_{\rmL} P_{\arr_{\rmL}}(z).
\]
We will show that $\sum_{\rmL} P_{\arr_{\rmL}}(z)$ interlaces $(z-1) P_{\arr^{\rmH_0}}(z)$, and therefore $P_\arr(z)$ is real-rooted.

First, \cref{ex:PEul-rk2} shows that $P_{\arr^{\rmH_0}}(z) = z^2 + (k-2)z$,
so the polynomial $(z-1) P_{\arr^{\rmH_0}}(z)$ 
has zeros at
\[
z = \red{-(k-2)},\red{0},\red{1}.
\]
On the other hand,
\[
\sum_{\rmL} P_{\arr_{\rmL}}(z) = m z^2 + \sum_{\rmL} (n_{\rmL} - 2)  z,
\]
where the sum is over the lines not contained in $\rmH_0$ and $n_{\rmL}$ denotes the number of hyperplanes containing~${\rmL}$.
The zeros of this polynomial are
\[
z = \blue{- \big( \tfrac{\sum_{\rmL} n_{\rmL}}{m} - 2 \big)}, \blue{0}.
\]
\textbf{Claim:} For any $\rmL$ not contained in $\rmH_0$, $n_{\rmL} \leq k$.\\
To prove the claim, note that for all pair of distinct hyperplanes $\rmH , \rmH' \in \arr$ containing $\rmL$, the lines $\rmH \cap \rmH_0$ and $\rmH' \cap \rmH_0$ are distinct. Otherwise, a dimension argument shows that
\[
	\rmL = \rmH \cap \rmH' = \rmH \cap \rmH' \cap \rmH_0 \subseteq \rmH_0.
\]
Since $\rmH_0$ contains $k$ lines, no more than $k$ distinct hyperplanes of $\arr$ can contain $\rmL$.

Therefore we have $2 \leq n_{\rmL} \leq k$ for all ${\rmL}$, and $0 \leq \tfrac{\sum_{\rmL} n_{\rmL}}{m} - 2 \leq k - 2$.
It follows that $\sum_{\rmL} P_{\arr_{\rmL}}(z)$ interlaces $(z-1) P_{\arr^{\rmH_0}}(z)$:
\[
\red{-(k-2)} \leq \blue{- \big( \tfrac{\sum_{\rmL} n_{\rmL}}{m} - 2 \big)} \leq \red{0} \leq \blue{0} \leq \red{1},
\]
as we wanted to show.
\end{proof}

\begin{remark}
Note that this result does not assume the arrangement to be simplicial.
For instance, the Primitive Eulerian polynomial of the graphic arrangement of a $4$-cycle in \cref{ex:PEul-4cycle} is real-rooted:
\[
z^3 + 3z^2 - z = z(z-\tfrac{-3+\sqrt{13}}{2})(z-\tfrac{-3-\sqrt{13}}{2}).
\]
\end{remark}

\begin{remark}
It immediately follows from \cref{t:rk3-real} that the cocharacteristic polynomial of any arrangement of rank at most $3$ is real-rooted. The same is not true for the characteristic polynomial.
For instance, the characteristic polynomial of the graphic arrangement of a $4$-cycle is $z(z-1)(z^2-3z+3)$, which has a pair of conjugate complex roots.
\end{remark}

\subsection{Real-rootedness fails non-simplicial arrangements in higher rank}

If $\arr$ is a non-simplicial arrangement of rank at least $4$, $P_\arr(z)$ might fail to be real-rooted.

\begin{example}
For $n \geq 2$, let $\cG_n$ be the arrangement of $n+1$ generic hyperplanes in $\RR^n$.
It makes sense to say \emph{the} arrangement, because any two such arrangements are combinatorially isomorphic; this is not true if we have more hyperplanes.
The arrangement $\cG_2$ is isomorphic to $\cI_2(3)$, and $\cG_3$ is isomorphic to the graphic arrangement of a $4$-cycle.

We verify using \cref{p:PEul-recursion} and induction that
\[
P_{\cG_n}(z) = z^n + \sum_{k = 1}^{n-1} (-1)^{k+1} \binom{n}{k+1} z^{n-k} = z(z-1)^n + (n+1)z^n - z^{n+1}.
\]
The second expression makes the inductive step cleaner, even though it involves terms of higher degree.
For $n = 2$, we have
\[
z(z-1)^2 + 3z^2 - z^3 = z^2 + z = P_{\cI_2(3)}(z).
\]

Now assume $n \geq 3$ and let $\rmH$ be any hyperplane in $\cG_n$.
The arrangement $\cG_n^\rmH$ consists of $n$ hyperplanes in generic position, so $\cG_n^\rmH \cong \cG_{n-1}$.
Moreover, any line in $\cG_n$ is contained in exactly $n-1$ hyperplanes; thus there are $\binom{(n+1)-1}{n-1} = n$ lines not contained in $\rmH$, and $(\cG_n)_\rmL$ is isomorphic to a coordinate arrangement of $n-1$ hyperplanes for each line $\rmL$.
Therefore,
\begin{align*}
P_{\cG_n}(z) & = (z-1)P_{\cG_{n-1}}(z) + nz^{n-1} \\
& = (z-1)(z(z-1)^{n-1} + nz^{n-1} - z^n) + nz^{n-1} \\
& = z(z-1)^n + (n+1)z^n - z^{n+1},
\end{align*}
as claimed.
Already for $n=4$, we have
$
P_{\cG_4}(z) = z^4 + 6 z^3 - 4 z^2 + z,
$
which has two complex roots.

\end{example}

\subsection{Real-rootedness for Coxeter and simplicial arrangements}

Frobenius \citep{frob1910Eulerian} first proved that the classical (type A) Eulerian polynomials are real-rooted.
In view of identity \cref{eq:typeA-Eul-is-PEul}, this implies that the type A Primitive Eulerian polynomials are real-rooted.
Later, Savage and Visontai \citep{sv15sEulerianRoots} proved that the polynomials $A^{(2)}_n(z)$ have only real roots.
It then follows by \cref{p:PEulB-1_2Eul} that so do the type B Primitive Eulerian polynomials.

With the help of SageMath \citep{sagemath}, we verified that Primitive Eulerian polynomials of the exceptional type in \cref{tab:PrimEulExceptional} are all real-rooted.
We have also verified that the type D Primitive Eulerian polynomial $P_{\cD_n}(z)$ is real-rooted for $n \leq 300$.
We conjecture this is true for all values of $n$,
and therefore that the Primitive Eulerian polynomial of any Coxeter arrangement is real-rooted.

\begin{conjecture}\label{c:PEul-Cox-rr}
The Primitive Eulerian polynomial of any real reflection arrangement is real-rooted.
\end{conjecture}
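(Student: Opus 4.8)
The conjecture reduces immediately to a single infinite family. By \cref{eq:prod-arr} and the fact that a product of real-rooted polynomials is real-rooted, it suffices to treat the irreducible reflection arrangements. The dihedral arrangements and all irreducibles of rank at most $3$ (in particular $H_3$) are covered by \cref{t:rk3-real}; the remaining exceptional types $H_4$, $F_4$, $E_6$, $E_7$, $E_8$ are handled by the explicit polynomials in \cref{tab:PrimEulExceptional}; type A follows from Frobenius's theorem \citep{frob1910Eulerian} together with \cref{eq:typeA-Eul-is-PEul}; and type B follows from the Savage--Visontai theorem \citep{sv15sEulerianRoots} together with \cref{p:PEulB-1_2Eul}. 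Thus \cref{c:PEul-Cox-rr} is equivalent to the assertion that $P_{\darr_n}(z)$ is real-rooted for every $n \ge 2$.

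The plan for type D is to recast this as an interlacing statement between polynomials already known to be real-rooted. Claim (i) in the proof of \cref{t:PEul-D-to-B}, combined with \cref{p:PEulB-1_2Eul}, gives
\[
P_{\darr_n}(z) \;=\; P_{\barr_n}(z) \;-\; n\,z\,A^{(2)}_{n-1}(z),
\qquad\text{and more generally}\qquad
P_{\cD_{n,k}}(z) \;=\; P_{\barr_n}(z) \;-\; (n-k)\,z\,A^{(2)}_{n-1}(z)
\]
for $0\le k\le n$. Both $P_{\barr_n}(z)$ and $z\,A^{(2)}_{n-1}(z)$ are real-rooted with all roots $\le 0$, and $\deg\bigl(z\,A^{(2)}_{n-1}\bigr)=\deg P_{\barr_n}-1$. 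By a standard extension of the interlacing criterion (see, e.g., \citep{branden04real}), if $z\,A^{(2)}_{n-1}(z)$ interlaces $P_{\barr_n}(z)$ then $P_{\barr_n}(z)-t\,z\,A^{(2)}_{n-1}(z)$ is real-rooted for every real $t$, in particular at $t=n$. Since $z$ divides both polynomials and $P_{\barr_n}(z)/z$ is the reversal of $A^{(2)}_n(z)$, the conjecture therefore comes down to the purely combinatorial claim that $A^{(2)}_{n-1}(z)$ interlaces the reversal $z^{n-1}A^{(2)}_n(1/z)$ of the $1/2$-Eulerian polynomial $A^{(2)}_n(z)$ --- a statement that would simultaneously establish real-rootedness of all the intermediate arrangements $\cD_{n,k}$ of \cref{ss:betweenDB}.

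To prove this interlacing I would run an induction on $n$ with a strengthened hypothesis: a closed system of interlacing relations among $A^{(2)}_{n-1}$, $A^{(2)}_n$, their reversals, $P_{\barr_{n-1}}$, $P_{\barr_n}$, and $P_{\darr_n}$. The relations would be propagated using three inputs: the refined ``compatible family'' recursions by which Savage and Visontai establish real-rootedness of the $1/2$-Eulerian polynomials; the first-order differential recursion $P_{\barr_n}(z)=(2n-1)z\,P_{\barr_{n-1}}(z)+2z(1-z)\,P'_{\barr_{n-1}}(z)$, which is of the classical form known to preserve both real-rootedness and interlacing; and a companion recursion for $P_{\darr_n}(z)$, obtained either from the quadratic recursion of \cref{t:PEulD-quad-rec} or by transporting the differential equation of the type B generating function through the identity $\sum_n P_{\darr_n}(z)\tfrac{x^n}{n!}=\bigl(1-zx\,e^{x(1-z)}\bigr)\sum_n P_{\barr_n}(z)\tfrac{x^n}{n!}$.

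The main obstacle is exactly this last step. The recursions available for $P_{\darr_n}$ genuinely entangle types A, B, and D --- reflecting the fact that the arrangement under a hyperplane of $\darr_n$ is not a reflection arrangement --- so one must first isolate a finite collection of polynomial families closed under all the relevant operators, then guess the precise pattern of interlacing relations linking them, and finally verify that this pattern is preserved at each inductive step, feeding in the degenerate data $P_{\cD_1}(z)=0$ and $P_{\cD_2}(z)=z^2$ correctly. A possible alternative, avoiding the type B detour, is a direct combinatorial attack through the descent model $P_{\darr_n}(z)=\sum_{w\in BW^D_n}z^{\des(w)}$ of \cref{t:PE-D-des}: one would encode $BW^D_n$ by inversion-sequence-like data refined by a suitable last-letter statistic and run the Savage--Visontai argument on that encoding. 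Finding such an encoding together with its recursion would be the crux in that route.
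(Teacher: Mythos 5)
This statement is a \emph{conjecture} in the paper, not a theorem: the authors verify it computationally (up to $n \le 300$ in type D, and for all exceptional types) and explicitly leave it open. There is therefore no ``paper proof'' to compare your proposal against, and your proposal does not close the gap either --- you say so yourself.

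That said, your reduction is correct and coincides with the observations the paper already records in \cref{s:roots}: products of real-rooted polynomials are real-rooted, rank $\le 3$ follows from \cref{t:rk3-real}, the exceptionals are checked from \cref{tab:PrimEulExceptional}, type A follows from Frobenius via \cref{eq:typeA-Eul-is-PEul}, and type B follows from Savage--Visontai via \cref{p:PEulB-1_2Eul}. Your identity
\[
P_{\darr_n}(z) = P_{\barr_n}(z) - n\,z\,A^{(2)}_{n-1}(z)
\]
is a correct consequence of claim (i) in the proof of \cref{t:PEul-D-to-B} together with \cref{p:PEulB-1_2Eul}, and the observation that a single interlacing relation --- namely that $A^{(2)}_{n-1}(z)$ interlaces the reversal $z^{n-1}A^{(2)}_n(1/z)$ --- would simultaneously settle real-rootedness of $P_{\darr_n}$ and of every intermediate $P_{\cD_{n,k}}$ is a genuinely nice reformulation that the paper does not state. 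But this interlacing statement is precisely where the difficulty lives, and the remainder of your proposal is a research program (a compatible-family induction à la Savage--Visontai, or a direct encoding of $BW^D_n$ by inversion-sequence data), not an argument. As written, the crux --- establishing the interlacing, or equivalently the real-rootedness of $P_{\darr_n}(z)$ for all $n$ --- remains open, exactly as in the paper.

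One small caution for when you attempt the interlacing: you should check that $z\,A^{(2)}_{n-1}(z)$ and $P_{\barr_n}(z)$ actually form an interlacing pair in the precise sense needed (matching leading-coefficient signs, degree difference exactly one), since the conclusion ``$f - t\,g$ real-rooted for \emph{all} real $t$'' requires the full Hermite--Kakeya--Obreschkoff hypothesis, not merely that $g$ has roots among those of $f$. Also verify the base cases carefully: $P_{\cD_1}(z) = 0$ is degenerate, and the identity specializes at $n=1$ to $0 = P_{\barr_1}(z) - z\,A^{(2)}_0(z) = z - z$, which is consistent but contributes nothing to an induction; your real base case is $n=2$, where $P_{\cD_2}(z) = z^2$.
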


The corresponding conjecture for the Eulerian polynomial was originally posed by Brenti \citep{brenti94cox-eul} and solved two decades later by Savage and Visontai \citep{sv15sEulerianRoots}.

The arrangements $\cD_{n,k}$ are all simplicial, and not combinatorially isomorphic to reflection arrangements whenever $k \neq 0,n$.
We have computationally verified that $P_{\cD_{n,k}}(z)$ is real-rooted for all $k \leq n \leq 150$.
We have also verified that the Primitive Eulerian polynomial of the crystallographic simplicial arrangements of Cuntz and Heckenberger \citep{ch15Weyl}, and the two additional examples in rank $4$ by Geis \citep{geis2019simplicial}, are all real-rooted.
We conclude this section with the following conjecture, which would of course imply \cref{c:PEul-Cox-rr}.

\begin{conjecture}
The Primitive Eulerian polynomial of any simplicial arrangement is real-rooted.
\end{conjecture}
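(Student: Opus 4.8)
The plan is to prove the conjecture by induction on $\rank(\arr)$, using the deletion--restriction recurrence of \cref{p:PEul-recursion} as the engine: for any hyperplane $\rmH \in \arr$,
\[
	P_\arr(z) = (z-1)\,P_{\arr^\rmH}(z) \;+\; \sum_{\rmL \not\subseteq \rmH} P_{\arr_\rmL}(z),
\]
the sum over rank-$1$ flats $\rmL \in \flat[\arr]$. The base case $\rank(\arr) \le 3$ is \cref{t:rk3-real}; as it applies to \emph{all} arrangements, it already handles the restriction term of every rank-$4$ arrangement. For the inductive step, every localization $\arr_\rmL$ is again simplicial, of rank $\rank(\arr)-1$, so by induction $P_{\arr_\rmL}(z)$ is a monic real-rooted polynomial of degree $\rank(\arr)-1$ with nonnegative coefficients and zero constant term; and $\arr^\rmH$ has rank $\rank(\arr)-1$. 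One point to settle up front is that a restriction of a simplicial arrangement need not be simplicial, so either one checks separately that $P_{\arr^\rmH}(z)$ is real-rooted, or --- more cleanly --- one proves the statement for the smallest class of arrangements containing all simplicial arrangements and closed under restriction and localization, which is a class within which the recurrence stays.

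With these inputs, real-rootedness of $P_\arr(z)$ follows from the theory of interlacing polynomials (compare the criterion recalled just before \cref{t:rk3-real}, in its iterated form) once one shows that
\[
	\bigl\{\, (z-1)P_{\arr^\rmH}(z)\,\bigr\} \;\cup\; \bigl\{\, P_{\arr_\rmL}(z) \,:\, \rmL \not\subseteq \rmH \,\bigr\}
\]
is a \emph{compatible} family, i.e.\ every nonnegative linear combination is real-rooted. I would split this into (i) the same-degree monic family $\{P_{\arr_\rmL}(z)\}_{\rmL}$ is compatible, so $\sum_\rmL P_{\arr_\rmL}(z)$ is real-rooted; and (ii) $\sum_\rmL P_{\arr_\rmL}(z)$ interlaces $(z-1)P_{\arr^\rmH}(z)$. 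Both facts are visible in the proof of \cref{t:rk3-real}: there $P_{\arr_\rmL}(z) = z\,(z + n_\rmL - 2)$, a common interleaver for $\{P_{\arr_\rmL}(z)\}_\rmL$ is simply $z$, and (ii) reduces to the bound $n_\rmL \le k$ on multiplicities of lines. The content of the conjecture is that both phenomena persist in higher rank.

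For step (i) the natural route is to produce a common interleaver for $\{P_{\arr_\rmL}(z)\}_\rmL$: a single polynomial of degree $\rank(\arr)-2$ interlacing every $P_{\arr_\rmL}(z)$, presumably obtained by localizing or restricting the $\arr_\rmL$ along a common flat through $\rmH$. Finding such an interleaver --- or proving one exists --- is, I expect, the main obstacle, since the sum of real-rooted polynomials is in general not real-rooted and the only leverage available is the combinatorial coherence of the lattice $\flat[\arr]$, which is not easy to convert into an interlacing statement. Step (ii) is then a comparison of smallest roots: both $P_{\arr^\rmH}(z)$ and $\sum_\rmL P_{\arr_\rmL}(z)$ have all roots $\le 0$, while $(z-1)P_{\arr^\rmH}(z)$ has the extra root $z = 1$ sitting to the right of all others, so (ii) amounts to showing that $\bigl(\sum_\rmL P_{\arr_\rmL}(z)\bigr)/z$ interlaces $P_{\arr^\rmH}(z)/z$ --- an interlacing between the Primitive Eulerian polynomials of the restriction and of the localizations of $\arr$, generalizing the rank-$3$ multiplicity bound.

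An orthogonal line, settling at least the Coxeter case \cref{c:PEul-Cox-rr} (only type D is open there), is to imitate Savage and Visontai's proof that $A_n^{(2)}(z)$ is real-rooted: using the model of \cref{t:PE-D-des}, build the elements of $BW^D_n$ one letter at a time, refine $P_{\cD_n}(z)$ by a secondary statistic on the last position, and show the resulting sequence of refined polynomials is mutually interlacing and is preserved by the (nonnegative) transfer step --- equivalently, run this analysis through the quadratic recurrence of \cref{t:PEulD-quad-rec}, or through \cref{t:PEul-D-to-B} combined with \cref{p:PEulB-1_2Eul}. This is technically delicate but follows an established template; it does not, however, reach the non-crystallographic simplicial arrangements, for which the recurrence-plus-compatibility approach seems unavoidable.
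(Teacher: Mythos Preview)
This statement is presented in the paper as an open \emph{conjecture}, not a theorem: the paper gives no proof, only computational evidence (rank $\le 3$ unconditionally via \cref{t:rk3-real}, the exceptional Coxeter types, $\cD_{n,k}$ for $n \le 150$, and the crystallographic simplicial arrangements of Cuntz--Heckenberger and Geis). There is therefore no proof in the paper to compare your proposal against.

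As for the proposal itself, you are candid that it is a plan with gaps rather than a proof, and the gaps you flag are real. The most serious one is the issue you raise and then wave away: the restriction $\arr^\rmH$ of a simplicial arrangement need not be simplicial. Your proposed fix --- pass to the smallest class containing simplicial arrangements and closed under restriction and localization --- is dangerous, because the paper explicitly exhibits non-simplicial rank-$4$ arrangements (the generic $\cG_4$) with $P_{\cG_4}(z) = z^4 + 6z^3 - 4z^2 + z$ having complex roots. Unless you can rule out that any simplicial arrangement ever restricts to such an arrangement, your enlarged class may already contain counterexamples to the strengthened statement your induction would require. And even short of that, the interlacing framework you invoke needs each summand real-rooted, so a single non-real-rooted $P_{\arr^\rmH}(z)$ breaks step~(ii) outright, regardless of whether the full sum happens to be real-rooted.

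The compatibility of $\{P_{\arr_\rmL}(z)\}_\rmL$ is, as you acknowledge, the crux and is genuinely open; looking for a common interleaver built from a further localization is a natural idea, but beyond rank~$3$ there is no evident candidate and no mechanism in $\flat[\arr]$ that forces one to exist. Your alternative route for type~D via refined interlacing sequences \`a la Savage--Visontai is closer to an executable program, but note that what makes their argument run is a very particular triangular recursive structure on the refined polynomials; neither \cref{t:PEulD-quad-rec} nor \cref{t:PEul-D-to-B} is manifestly of that form, so identifying the right refinement is itself nontrivial.
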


\bibliographystyle{plain}
{
\bibliography{bib}}

\end{document}